\documentclass{amsart}

\usepackage{amsfonts}
\usepackage{amssymb}
\usepackage{tikz}
\usepackage{afterpage}

\usepackage{euscript}

\usetikzlibrary{arrows,decorations.pathreplacing}
\usetikzlibrary{shapes,decorations.pathmorphing}
\usetikzlibrary{arrows,calc} 
 \usetikzlibrary{cd}

\allowdisplaybreaks

\usepackage{float}

\usepackage{graphicx}
\usepackage{bm}
\usepackage[
	pdftitle={},
	pdfauthor={},
	ocgcolorlinks,
	linkcolor=linkblue,
	citecolor=linkred,
	urlcolor=linkred]
{hyperref}
\usepackage{color}
\definecolor{linkred}{rgb}{0.75,0,0}
\definecolor{linkblue}{rgb}{0,0,0.75}
\usepackage{microtype}
\usepackage{multicol}
\usepackage{booktabs}
\usepackage{latexsym}
\usepackage{multirow}
\usepackage{setspace}
\usepackage{enumerate}

\theoremstyle{plain}
\newtheorem{theorem}{Theorem}
\newtheorem{exercise}{Exercise}
\newtheorem{proposition}{Proposition}[section]
\newtheorem{thm}[proposition]{Theorem}

\theoremstyle{definition}

\newtheorem{definition}[proposition]{Definition}

\newcommand{\beq}{\begin{equation}}
\newcommand{\eeq}{\end{equation}}

\newcommand{\cal}{\mathcal}

\newcommand{\cc}{\EuScript{C}}
\newcommand{\cl}{\EuScript{L}}
\newcommand{\cd}{\mathcal{D}}
\newcommand{\ce}{\mathcal{E}}
\newcommand{\cf}{{\cal F}}
\newcommand{\ch}{\mathcal{H}}
\newcommand{\co}{\mathcal{O}}

\newcommand{\ct}{\mathcal{T}}
\newcommand{\cu}{\mathcal{U}}
\newcommand{\cv}{\mathcal{V}}

\newcommand{\bc}{\mathbb{C}}
\newcommand{\be}{\mathbb{E}}
\newcommand{\bh}{\mathbb{H}}
\newcommand{\bn}{\mathbb{N}}
\newcommand{\bp}{\mathbb{P}}
\newcommand{\bq}{\mathbb{Q}}
\newcommand{\br}{\mathbb{R}}
\newcommand{\bz}{\mathbb{Z}}
\newcommand{\tr}{\text{tr}\hspace{.5mm}}

\newcommand{\modm}{\cal M}

\begin{document}
	
\title{Spin structures and measures on the moduli space of curves}
\author{Paul Norbury}
\address{School of Mathematics and Statistics, University of Melbourne, VIC 3010, Australia}
\email{\href{mailto:norbury@unimelb.edu.au}{norbury@unimelb.edu.au}}
\thanks{}
\subjclass[2020]{32G15; 14H10; 30F35}
\date{\today}

\begin{abstract}
This article arose out of notes for a CIME summer school mini-course in Cetraro.  In it we  develop differential-geometric constructions on the moduli space
${\cal M}_{g,n}^{\rm spin}$ of smooth spin curves, with particular emphasis on
their interpretation in hyperbolic geometry.  We relate the locally constant
sheaf arising from a hyperbolic spin structure to the corresponding
holomorphic sheaf, extending this known relationship to spin curves with Ramond
marked points.  We use the hyperbolic description to construct characteristic
forms and spin measures on moduli spaces of hyperbolic surfaces with geodesic
boundary, providing a framework for applying Teichm\"uller-theoretic methods
to their volumes and volume recursions.  Finally, we study separately the
even and odd components of spin moduli space and find symmetries between their
volume contributions which are not apparent from the contributions of
individual boundary strata.
\end{abstract}

\maketitle

\tableofcontents

\section{Introduction}

The Weil--Petersson volumes of the moduli spaces 
${\cal M}_{g,n}$ parametrising genus $g$ curves with $n$ labeled points $(C, p_1,\ldots,p_n)$,
for any $g,n \in\bn=\{0,1,2,\ldots\}$ satisfying $2g - 2 + n > 0$ play
a central role in both pure mathematics and theoretical physics.
Their study has led to deep and productive connections between the two areas.
In particular, their appearance in two-dimensional quantum gravity implies
that the partition functions built from these volumes satisfy remarkable
integrability properties. From the algebro-geometric viewpoint, these volumes
arise as intersection numbers on the moduli spaces $\overline{{\cal M}}_{g,n}$ of
genus $g$ stable curves.

Recently, the super Weil--Petersson volumes of the moduli spaces of
genus $g$ super Riemann surfaces with $n$ labeled points, denoted $\widehat{{\cal M}}_{g,n}$,
have been shown to possess analogous structures. They satisfy a Virasoro 
constraint, realised through a recursion that parallels Mirzakhani's 
recursion for classical volumes \cite{MirSim}.
This super-recursion was originally described heuristically using techniques 
from super geometry \cite{SWiJTG} and later proven rigorously using
algebro-geometric methods in \cite{NorEnu}.

The moduli space of spin curves ${\cal M}_{g,n}^{\rm spin}$ parametrising genus curves $(C, p_1,\ldots,p_n)$ equipped with a spin structure appears
naturally in this context as the underlying topological space of the moduli
space of super Riemann surfaces $\widehat{{\cal M}}_{g,n}$, an infinitesimal fattening of ${\cal M}_{g,n}^{\rm spin}$.
In these notes we work solely with this topological space, rather than the
full supermanifold. A central object of study is a naturally defined vector
bundle
\[
E_{g,n} \longrightarrow{\cal M}_{g,n}^{\rm spin},
\]
arising via a cohomological construction in Section~\ref{vecE}.  It is naturally the normal bundle of
${\cal M}_{g,n}^{\rm spin}$ inside  $\widehat{{\cal M}}_{g,n}$. For this reason, $E_{g,n}$ exhibits properties analogous to those of
the tangent bundle $T{\cal M}_{g,n}$ of the classical moduli space of curves.  In particular, a natural Hermitian metric is constructed on $E_{g,n}$---see~\eqref{hermetric}---using the canonical complete hyperbolic surface uniformising $C-\{p_1,...,p_n\}$, directly mirroring the Weil--Petersson Hermitian metric on $T{\cal M}_{g,n}$.  The Hermitian metrics on $T{\cal M}_{g,n}$ and $E_{g,n}$ give rise to the Weil--Petersson symplectic form $\omega_{WP}$, respectively a natural Euler form $e(E_{g,n})$, which together define a finite measure on ${\cal M}_{g,n}^{\rm spin}$
 \[\mu=e(E_{g,n})\exp\omega_{WP}\in\Omega^{\text{top}}{\cal M}^{\rm spin}_{g,n}.\]
This pushes forward to a measure on ${\cal M}_{g,n}$, which decompose into a sum of measures obtained via restriction of $\mu$ to each connected component of ${\cal M}^{\rm spin}_{g,n}$.  

The Weil--Petersson form $\omega_{WP}$ arises naturally via the realisation of ${\cal M}_{g,n}$ as the moduli space of complete hyperbolic surfaces with cusps at the labeled points.  For any $(L_1,...,L_n)\in\br_{\geq0}$ there is a deformation $\omega_{WP}(L_1,...,L_n)$ of $\omega_{WP}$ obtained by deforming the cusps 
to geodesic boundary components.  This defines $(L_1,...,L_n)$ dependent measures
\[\mu(L_1,...,L_n)=e(E_{g,n})\exp\omega_{WP}(L_1,...,L_n)\in\Omega^{\text{top}}{\cal M}^{\rm spin}_{g,n}.\]
Spin structures on the complete hyperbolic surface 
$C-\{p_1,\ldots,p_n\}$ that come from the restriction of a spin structure on the hyperbolic surface $C$ form an open and closed subset ${\cal M}_{g,\{1^n\}}^{\rm spin}\subset{\cal M}_{g,n}^{\rm spin}$ known as the {\em Neveu--Schwarz} components.  The {\em Neveu--Schwarz} components consist of two connected components corresponding to even and odd spin structures.    
The total measure
\[ \widehat{V}_{g,n}(L_1,...,L_n):=(-1)^n2^{g-1+n}\int_{{\cal M}_{g,\{1^n\}}^{\rm spin}}\mu(L_1,...,L_n)
\]
arises as the super Weil--Petersson volume of the Neveu--Schwarz components of the moduli space of super Riemann surfaces $\widehat{{\cal M}}_{g,n}$.  The genus zero Neveu--Schwarz volumes vanish $\widehat{V}_{0,n}(L_1,...,L_n)=0$.   Stanford and Witten \cite{SWiJTG} showed that these volumes satisfy recursion relations analogous to Mirzakhani's recursion relations \cite{MirSim} for the Weil--Petersson volumes of ${\cal M}_{g,n}$.  Their proof, using supergeometry, is heuristic and the recursion was proven rigorously via algebro-geometric techniques in \cite{NorEnu}.
Define
\begin{equation}  \label{kerD} 
D(x,y,z)=\frac{\sinh\frac{x}{4}\sinh\frac{y+z}{4}}{\cosh\frac{x-y-z}{4}\cosh\frac{x+y+z}{4}}
\end{equation}
and $R(x,y,z)=\tfrac12(D(x+y,z,0)+D(x-y,z,0))$. 

\begin{theorem}[Stanford-Witten \cite{SWiJTG}, N. \cite{NorEnu}] \label{NSrec}
The volumes $\widehat{V}_{g,n}(L_1,...,L_n)$ for $g>0$ and $n>0$ are uniquely determined by the following recursion relations:
\begin{align*} 
L_1&\widehat{V}_{g,n}(L_1,...,L_n)=\tfrac18L_1\delta_{1,n}\delta_{1,g}+\sum_{j=2}^n\int_0^\infty \hspace{-2mm}xR(L_1,L_j,x)\widehat{V}_{g,n-1}(x,L_{K\backslash\{j\}})dx\\
+&\tfrac12\hspace{-1mm}\int_0^\infty\hspace{-3mm}\int_0^\infty\hspace{-4mm} xyD(L_1,x,y)\big[\widehat{V}_{g-1,n+1}(x,y,L_K)\hspace{-1mm}+\hspace{-4mm}\mathop{\sum_{g_1+g_2=g}}_{I \sqcup J = K}\hspace{-3mm}\widehat{V}_{g_1,|I|+1}(x,L_I)\widehat{V}_{g_2,|J|+1}(y,L_J)\big]dxdy
\nonumber
\end{align*}
for $K=(2,...,n)$.
\end{theorem}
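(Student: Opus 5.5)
We would follow the algebro-geometric route of \cite{NorEnu} rather than the heuristic super-geometric argument. The first step is to convert $\widehat{V}_{g,n}(L_1,\ldots,L_n)$ into a generating function of intersection numbers on the compactified spin moduli space. The Euler form $e(E_{g,n})$ extends over $\overline{{\cal M}}^{\rm spin}_{g,n}$ as the Euler class of $E_{g,n}=-R\pi_*\mathcal{E}^\vee$, where $\mathcal{E}$ is the universal spin sheaf. Pushing forward to $\overline{{\cal M}}_{g,n}$ gives a class $\Theta_{g,n}\in H^{4g-4+2n}(\overline{{\cal M}}_{g,n})$; up to the normalisation $(-1)^n2^{g-1+n}$ and the degree of the forgetful map, it is the pushforward of $e(E_{g,n})$ on the Neveu--Schwarz component. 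By Wolpert's and Mirzakhani's identification $\omega_{WP}(L)=2\pi^2\kappa_1+\tfrac12\sum L_i^2\psi_i$, we get
\[\widehat{V}_{g,n}(L)=\int_{\overline{{\cal M}}_{g,n}}\Theta_{g,n}\exp\bigl(2\pi^2\kappa_1+\tfrac12\textstyle\sum L_i^2\psi_i\bigr),\]
which is a polynomial in the $L_i^2$ of degree $g-1$. The key inputs are the properties of $\Theta_{g,n}$ that come from the cohomological construction. Its degree is $2g-2+n$. It satisfies $\Theta_{g,n+1}=\psi_{n+1}\pi^*\Theta_{g,n}$ under the forgetful map. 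It pulls back along the gluing maps to products $\Theta_{g_1,n_1+1}\otimes\Theta_{g_2,n_2+1}$ and $\Theta_{g-1,n+2}$, because the NS-type nodes are the only ones contributing. Finally, $\Theta_{1,1}=3\psi_1$.

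Next we show that these properties force the intersection numbers $\int\Theta_{g,n}\prod\psi_i^{k_i}\kappa_1^m$ to be governed by a KdV-type tau function, namely the Brézin--Gross--Witten tau function. Concretely, the properties give Virasoro constraints, equivalently topological recursion for the Bessel curve $x=\tfrac12z^2$, $y=1/z$. We would do this by first treating the pure $\psi$ case, $m=0$. There, dimension counting together with the pullback property reduces everything to a finite set of initial values, and the recursion among descendant integrals follows from a comparison theorem: any cohomological field theory-like class with these properties is determined by its genus-one value. One then matches against the known BGW Virasoro constraints $L_m\tau^{BGW}=0$. The $\kappa_1$ insertions are then absorbed by the standard translation $\exp(2\pi^2\kappa_1)$, i.e. a shift of times. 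On the spectral curve this replaces $y=1/z$ by $y=\cos(2\pi z)/z$.

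The final step converts the topological recursion on this curve into the integral recursion. We take the Laplace transform in each $L_i$, as in the derivation of Mirzakhani's recursion from Eynard--Orantin recursion. The recursion kernel $\frac{\omega_{0,2}}{2(y(z)-y(-z))dx}$ for $y=\cos(2\pi z)/z$ is then inverse-Laplace transformed. One checks directly that the result is $D(x,y,z)$ of \eqref{kerD}, and that $R$ arises from the $\omega_{0,2}$ terms. This is a residue computation of $\int_0^\infty\!\int_0^\infty xyD(L,x,y)e^{-\ldots}$-type integrals using partial fractions in $\cosh$. The initial term $\tfrac18L_1\delta_{1,n}\delta_{1,g}$ comes from $\widehat V_{1,1}=\tfrac18$. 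Since $\widehat V_{0,n}=0$, the recursion has no genus-zero base and starts from $(g,n)=(1,1)$. Uniqueness is immediate by induction on $2g-2+n$, since the right-hand side involves only smaller $2g-2+n$.

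The main obstacle is the middle step: establishing that $\Theta_{g,n}$ has the stated pullback properties. The delicate part is at Ramond nodes, where one must show that the contribution vanishes, for instance because $E$ acquires a trivial summand there. Identifying the resulting intersection numbers with BGW is a further hard point; here I would first attempt uniqueness via Teleman-type reconstruction, or alternatively verify the Virasoro constraints directly by localisation.
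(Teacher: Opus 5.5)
Your outer architecture is the one the paper relies on. Theorem~\ref{thvoltheta} writes $\widehat V_{g,n}$ as $\int\Theta_{g,n}\exp(2\pi^2\kappa_1+\frac12\sum L_i^2\psi_i)$. Theorem~\ref{CGG} identifies the $\Theta$-descendants with the BGW tau function. Then \cite{NorEnu} converts the resulting topological recursion on $x=\frac12z^2$, $y=\cos(2\pi z)/z$ into the integral recursion with kernels $D$ and $R$ by Laplace transform. Your handling of Ramond nodes via a trivial summand, $\Theta_{1,1}=3\psi_1$, the $\kappa_1$ shift, the kernel computation and the uniqueness induction are all fine.

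\textbf{The gap is the middle step.} The paper does not prove it; it imports it as Theorem~\ref{CGG}, and your sketch would not establish it. The uniqueness result of \cite{NorNew} says that properties (i)--(iv) determine every $\int\Theta_{g,n}\prod\psi_i^{k_i}$. But it gets there through tautological relations (high-degree $\psi$-monomials being boundary-supported), not through Virasoro operators.

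So ``determined by the genus-one value'' followed by ``match against $L_m\tau^{BGW}=0$'' proves nothing on its own. It would work only if you already knew that the BGW coefficients come from a class satisfying (i)--(iv), or satisfy the same relations. That is exactly the statement $Z^\Theta=Z^{BGW}$, which was open until \cite{CGGRel}.

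Your fallback tools do not apply as stated:
\begin{itemize}
\item Teleman reconstruction needs a semisimple CohFT, and $\Theta$ is not one. It has no flat unit: property (iii) has $\psi_{n+1}\pi^*$ in place of $\pi^*$. It also vanishes identically in genus zero, since $\deg\Theta_{0,n}=n-2>\dim\overline{\mathcal{M}}_{0,n}$, so its quantum product is zero.
\item Localisation needs a torus action, and $\overline{\mathcal{M}}_{g,n}$ has none to localise with.
\end{itemize}

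To close the gap you have two options. The simplest is to cite Theorem~\ref{CGG} directly. Alternatively, work with the full Chiodo class $p_*c(E_{g,n})$ rather than with $\Theta$ alone. Its explicit graph sum \eqref{JPPZ} has Givental R-matrix/translation form over a semisimple two-dimensional theory, so the Givental--topological recursion dictionary applies to it. You would then isolate its top-degree part $\Theta_{g,n}$, which on the spectral-curve side corresponds to a limit producing the Bessel curve. This is the ``limit of spectral curves'' mechanism the paper alludes to after Theorem~\ref{CGG}.
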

From Theorem~\ref{NSrec} one can deduce that $\widehat{V}_{g,n}(L_1,...,L_n)$ is polynomial in the $L_i$ and further deduce underlying Virasoro structure and a relationship with the KdV hierarchy \cite{NorEnu}.   

The measures over general, non Neveu--Schwarz, components of ${\cal M}_{g,n}^{\rm spin}$ produce volumes $\widehat{V}_{g,n}^{(m)}(L_1,...,L_n)$ defined in \eqref{voldef}, where $m$ denotes the number of Ramond points---see Section~\ref{boundbeh}---so $\widehat{V}_{g,n}(L_1,...,L_n)=\widehat{V}_{g,n}^{(0)}(L_1,...,L_n)$.  They assemble to produce the following series in $s$:
\[
\widehat{V}_{g,n}(s,L_1,...,L_n):=\sum_{m=0}^\infty\frac{s^m}{m!}\widehat{V}_{g,n}^{(m)}(L_1,...,L_n).
\]
The factor $1/m!$ makes the $m$ Ramond points unlabeled. The genus zero volumes are now non-trivial, and in particular the disk and annulus functions $\widehat{V}_{0,1}$ and $\widehat{V}_{0,2}$ arise from hyperbolic surfaces due to the presence of Ramond points.

Remarkably, $\widehat{V}_{g,n}(s,L_1,...,L_n)$ satisfies the same recursion relations from Theorem~\ref{NSrec} satisfied by $\widehat{V}_{g,n}(L_1,...,L_n)$, with different initial conditions.  
\begin{theorem}[Alexandrov, N., \cite{ANoSup}]  \label{thmsup}
The volumes $\widehat{V}_{g,n}(s,L_1,...,L_n)$ for $g\geq0$ and $n>0$ are uniquely determined by the Stanford-Witten recursion:
\begin{align} \label{recsup}
L_1&\widehat{V}_{g,n}(s,L_1,...,L_n)=\tfrac12\int_0^\infty\hspace{-2mm}\int_0^\infty\hspace{-2mm} xyD(L_1,x,y)P_{g,n+1}(x,y,L_K)dxdy\\
&+\sum_{j=2}^n\int_0^\infty \hspace{-2mm}xR(L_1,L_j,x)\widehat{V}_{g,n-1}(s,x,L_{K\backslash\{j\}})dx+\delta_{1,n}(\tfrac{s^2\delta_{0,g}}{2}+\tfrac{\delta_{1,g}}{8})L_1
\nonumber 
\end{align}
for $K=(2,...,n)$ and
\[P_{g,n+1}(x,y,L_K)=\widehat{V}_{g-1,n+1}(s,x,y,L_K)+\hspace{-3mm}\mathop{\sum_{g_1+g_2=g}}_{I \sqcup J = K}\hspace{-2mm}\widehat{V}_{g_1,|I|+1}(s,x,L_I)\widehat{V}_{g_2,|J|+1}(s,y,L_J).
\]
\end{theorem}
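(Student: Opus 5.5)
The plan is to follow the algebro-geometric strategy that proves Theorem~\ref{NSrec} in \cite{NorEnu}, now applied to all components of ${\cal M}_{g,n}^{\rm spin}$ at once. The first step expresses each $\widehat{V}^{(m)}_{g,n}(L_1,\ldots,L_n)$ as an intersection number on the compactified spin moduli space $\overline{\cal M}^{\rm spin}_{g,n+m}$, where the $m$ Ramond points are unlabeled. Concretely,
\[\widehat{V}^{(m)}_{g,n}(L_1,\ldots,L_n)\propto\int_{\overline{\cal M}^{\rm spin}_{g,\{1^n,0^m\}}}e(E_{g,n+m})\exp\Big(2\pi^2\kappa_1+\tfrac12\sum_{i=1}^n L_i^2\psi_i\Big).\]
Here $\omega_{WP}(L_1,\ldots,L_n)$ is replaced by its cohomology class via Mirzakhani/Wolpert. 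The Euler form $e(E)$ is replaced by the top Chern class of the pushforward bundle $-R\pi_*\mathcal{E}^\vee$ built from the universal spin sheaf. This uses the relationship between the hyperbolic and holomorphic spin sheaves from Section~\ref{vecE}, including its extension across Ramond points. Summing with weights $s^m/m!$, I would push forward to $\overline{\cal M}_{g,n}$ by forgetting the Ramond points. The result is $\widehat{V}_{g,n}(s,\cdot)$ written as a Chern class $\Theta$-type class times $\exp(\sum_m s^m(\cdot)\kappa\text{-classes})$. This amounts to a translation of the partition function along times $t_k$, in the spirit of the Manin--Zograf shift.

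The second step identifies the generating function. Via the cohomological field theory structure of $e(E)$ (the Brézin--Gross--Witten/$\Theta$-class properties: pullback compatibility along gluing maps and degree $2g-2+n$), the $s=0$ partition function is the BGW tau function $Z^{\Theta}$. The $s$-deformation then corresponds to a shift of KdV times, giving $Z(s)=\exp(\ldots)Z^{\Theta}$ evaluated at shifted times. Such a shift preserves the KdV hierarchy and conjugates the Virasoro operators. I would then verify that the shifted tau function still satisfies Virasoro constraints $L_k Z(s)=0$ for $k\ge0$, with the same operators apart from a modified dilaton/string term. The dilaton/string term is exactly what produces the new initial data $\frac{s^2}{2}\delta_{0,g}\delta_{1,n}L_1$. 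Equivalently, the spectral curve of the topological recursion changes from $y=\frac{\cos z}{z}$ (for BGW after Laplace transform) to a deformed one, while the recursion kernel is unchanged.

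The final step translates the Virasoro constraints or topological recursion into the integral recursion~\eqref{recsup} by inverse Laplace transform, exactly as in \cite{NorEnu}. The kernel $D(x,y,z)$ arises from the Laplace transform of the recursion kernel, and $R$ arises from the terms involving two boundaries. Uniqueness is immediate: the right-hand side involves only $(g',n')$ with $2g'-2+n'<2g-2+n$, and the base cases are $\widehat{V}_{0,1}$ and $\widehat{V}_{1,1}$.

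I expect the main obstacle to be the first step for genus zero and for Ramond boundary behaviour. Specifically, one must check that $e(E)$ restricted to boundary strata with Ramond nodes factorises correctly, since $E$ jumps in rank at Ramond points. One must also check that forgetting Ramond points produces precisely the predicted $\kappa$-shift, including the unstable terms that give the $s^2/2$ disk contribution. I would first attempt this by a direct computation of the pushforward of $e(E)$ under the forgetful map. That computation would rely on the exact sequence relating $E_{g,n+1}$ to the pullback of $E_{g,n}$.
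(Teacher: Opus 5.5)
\textbf{Gap: the middle step.} Your outer framework agrees with the paper's route via \cite{ANoSup}. You turn volumes into intersection numbers of $\Omega_{g,\{1^n,0^m\}}$ (Theorem~\ref{thvoltheta}), identify the $s$-weighted generating function with a KdV tau function, and convert its Virasoro constraints into \eqref{recsup}. The step that fails is the middle one, and it is the real content of the theorem.

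First, Ramond points cannot be forgotten one at a time, since their number is even. So the relation $\Theta_{g,n+1}=\psi_{n+1}\pi^*\Theta_{g,n}$, and the exact sequence comparing $E_{g,n+1}$ with $\pi^*E_{g,n}$ behind it, exist only for Neveu--Schwarz points.

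More decisively, the $s$-deformation cannot be a time shift of $Z^{BGW}$ at all, for two reasons.
\begin{itemize}
\item \emph{Genus zero.} $\Theta_{0,n}=0$, so $\log Z^{BGW}$ has no $\hbar^{-1}$ part. Pushforward along forgetful maps preserves genus, so any shift it produces is $\hbar$-independent and keeps the genus-zero free energy zero. Yet the genus-zero volumes are nonzero: $\widehat V_{0,1}(s,L)=\frac{s^2}{2}+\cdots$, and e.g.\ $\widehat V_{0,\{1,0^4\}}=6\pi^2$. For $n=1,2$ there is not even an $\overline{\mathcal M}_{0,n}$ to push forward to.
\item \emph{The kernel.} A shift $t_k\mapsto t_k+c_k(s)$ adds to every Virasoro operator $L_m$ terms linear in $\partial_{t_{k+m}}$ with $s$-dependent coefficients. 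This is exactly how the $2\pi^2\kappa_1$ shift turns the BGW kernel into $D$. It would therefore make $D$ and $R$ depend on $s$, and it can never change a constant term. So it is inconsistent with your own claim that only the dilaton/string term changes.
\end{itemize}

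The missing ingredient is Theorem~\ref{BGW=spin}. The generating function is the \emph{generalised} BGW tau function, which is a genuinely different KdV tau function. Its Virasoro operators agree with the BGW ones except for an $s^2$-dependent genus-zero constant in $L_0$. That constant produces the source term $\frac{s^2}{2}\delta_{0,g}\delta_{1,n}L_1$ while leaving $D$ and $R$ unchanged. Nothing in your sketch would establish this identification.

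\textbf{Uniqueness.} Your uniqueness argument is also incorrect. Because $\widehat V_{0,1}\neq 0$, taking $g_1=0$ and $I=\emptyset$ in $P_{g,n+1}$ puts $\widehat V_{0,1}(s,x)\,\widehat V_{g,n}(s,y,L_K)$ on the right-hand side. So the recursion is not triangular in $2g-2+n$. For $(g,n)=(0,1)$ it is quadratic in $\widehat V_{0,1}$ itself (Theorem~\ref{disk}), so $\widehat V_{0,1}$ is not a base case: it is determined by the recursion. Uniqueness instead comes from a double induction using $\widehat V_{0,1}=O(s^2)$. The coefficient of $s^m$ in $\widehat V_{g,n}$ is fixed by coefficients of lower powers of $s$ and by cases with smaller $2g-2+n$. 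This is the paper's ``recursive procedure in increasing powers of $s$''.
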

The recursion relations \eqref{recsup} give a recursive procedure in increasing powers of $s$ which specialises at $s=0$ to Theorem~\ref{NSrec}.  Note that $D(x,y,z)$ and $R(x,y,z)$ used to define the recursion relations do not depend on $s$.
 
The restriction of \eqref{recsup} to the disk case $(g,n)=(0,1)$ produces the following elegant recursion relation.
\begin{theorem}[\cite{NorSup}]  \label{disk}
The disk function $\widehat{V}_{0,1}(s,L)$ is uniquely determined by the recursion relation
\[ \widehat{V}_{0,1}(s,L)=\frac{s^2}{2!}+\frac{1}{2L}\int_0^\infty\int_0^\infty\hspace{-2mm} xyD(L,x,y)\widehat{V}_{0,1}(s,x)\widehat{V}_{0,1}(s,y)dxdy.
\]
\end{theorem}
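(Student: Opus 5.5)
Specialise Theorem~\ref{thmsup} to $(g,n)=(0,1)$. Then $K=\emptyset$, so the sum over $j$ is empty and only the $D$-term and the inhomogeneous term $\delta_{1,n}(\tfrac{s^2\delta_{0,g}}{2}+\tfrac{\delta_{1,g}}{8})L_1=\tfrac{s^2}{2}L$ survive.

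Next examine $P_{0,2}(x,y)$. The term $\widehat{V}_{g-1,n+1}$ has genus $-1$ and vanishes by convention. In the product sum we need $g_1+g_2=0$, so $g_1=g_2=0$, and $I\sqcup J=\emptyset$ forces $I=J=\emptyset$. The only summand is therefore $\widehat{V}_{0,1}(s,x)\widehat{V}_{0,1}(s,y)$. The recursion becomes
\[
L\,\widehat{V}_{0,1}(s,L)=\tfrac{s^2}{2}L+\tfrac12\int_0^\infty\int_0^\infty xyD(L,x,y)\widehat{V}_{0,1}(s,x)\widehat{V}_{0,1}(s,y)\,dx\,dy,
\]
and dividing by $L>0$ gives the stated identity.

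For uniqueness, expand $\widehat{V}_{0,1}(s,L)=\sum_m \frac{s^m}{m!}\widehat{V}^{(m)}_{0,1}(L)$ and compare coefficients of $s^m$. The quadratic term at order $s^m$ involves only products $\widehat{V}^{(a)}_{0,1}\widehat{V}^{(b)}_{0,1}$ with $a+b=m$. I would show that $\widehat{V}^{(0)}_{0,1}=0$, by the vanishing of the genus-zero Neveu--Schwarz volumes, and that $\widehat{V}^{(1)}_{0,1}=0$, by parity: a genus-zero spin curve needs an even number of Ramond points. Together these force $a,b\ge2$, hence $a,b<m$. The $s^2$ coefficient is then fixed to be $\tfrac{s^2}{2}$. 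Each higher coefficient is determined by strictly lower ones, so the equation determines $\widehat{V}_{0,1}$ recursively. In particular it is a series in $s^2$.

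The main obstacle is this uniqueness bookkeeping. It requires the base cases $\widehat{V}^{(0)}_{0,1}=\widehat{V}^{(1)}_{0,1}=0$, which guarantee that the integral term never feeds back into its own order. It also requires that the integrals converge on the relevant class of functions, here polynomials in $L$. Convergence holds because $D(L,x,y)$ decays exponentially in $x+y$, which follows from \eqref{kerD}.
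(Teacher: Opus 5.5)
Your proposal is correct and follows the paper's route. The disk recursion is precisely the $(g,n)=(0,1)$ specialisation of Theorem~\ref{thmsup}: the $j$-sum is empty, the genus $-1$ term vanishes, $P_{0,2}(x,y)=\widehat{V}_{0,1}(s,x)\widehat{V}_{0,1}(s,y)$, and the inhomogeneous term is $\tfrac{s^2}{2}L$; your order-by-order determination in $s$ is the paper's ``recursive procedure in increasing powers of $s$''. As a minor remark, once $\widehat{V}^{(0)}_{0,1}=0$ the recursion itself forces $\widehat{V}^{(1)}_{0,1}=0$ and makes the $s^m$ coefficient depend only on orders $1,\dots,m-1$, so your parity input is harmless but redundant.
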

The disk recursion relation produces 
\[ \widehat{V}_{0,1}(s,L)=\frac{s^2}{2!}+\left(6\pi^2+\frac12L^2\right)\frac{s^4}{4!}+\left(330\pi^4 + 30\pi^2L^2 + \frac38L^4\right)\frac{s^6}{6!}+...
\]

The main focus of the paper is contained in Sections~\ref{spinstr}, \ref{sheafc} and \ref{spinmeas} which give differential-geometric constructions on the moduli space
${\cal M}_{g,n}^{\rm spin}$ of smooth spin curves.  They survey existing results,
with particular emphasis on constructions that admit a direct interpretation
in terms of hyperbolic geometry, while also developing several new aspects of
the theory.  One new contribution is Theorem~\ref{exactseq}, which extends to
Ramond points the relationship between the cohomology of a locally constant
sheaf arising from a hyperbolic spin structure and that of the corresponding
holomorphic sheaf.  This motivates the detailed treatment of sheaf cohomology
in Sections~\ref{spinstr} and \ref{sheafc}.

A second aim is to develop the spin measures directly on moduli spaces of
hyperbolic surfaces with geodesic boundary.  This provides a framework in
which Teichm\"uller-theoretic methods may be applied to the measures and their
volume recursions.  We also study separately the contributions of the even and
odd components of the moduli space of spin curves, revealing symmetries which
are not apparent from the individual boundary-stratum contributions.

We give less detail on algebro-geometric 
constructions, referring instead to the existing literature, and concentrate
on differential-geometric interpretations, examples, and open problems.
Nevertheless, several of the results ultimately rely on intersection theory
on the compactifications
$\overline{{\cal M}}_{g,n}$ and
$\overline{{\cal M}}_{g,n}^{\rm spin}$, so the necessary algebro-geometric
constructions are recalled in Section~\ref{compact}.

Most of the constructions in these notes can be formulated entirely within
ordinary geometry, without using supergeometry.  Supergeometry nevertheless
provides an important motivation: the heuristic derivation of the volume
recursion due to Stanford and Witten \cite{SWiJTG} uses the geometry of super
Riemann surfaces.  We describe this viewpoint in Section~\ref{super} and
explain how it is related to the differential-geometric constructions
developed here.

{\bf Acknowledgements.}  The author would like to thank Ga\"etan Borot for useful comments, the Centro Internazionale Matematico Estivo for hosting the summer school {\em Enumerative geometry, moduli spaces, and quantization} in Cetraro from which these notes arose, and the Max Planck Institute for Mathematics in the Sciences, Leipzig, for its hospitality while part of this work was carried out.
 ChatGPT was used during the preparation of this paper to provide editing support.

\section{Spin structures on Riemann surfaces} \label{spinstr}



\subsection{Spin structures}   

Let $C$ be a Riemann surface. Given coordinate charts
\[
z=\phi_U:U\to\mathbb C,
\qquad
w=\phi_V:V\to\mathbb C,
\]
the transition function on the overlap $U\cap V$ is the holomorphic
change of coordinates
\[
z=z(w)=\phi_U\circ\phi_V^{-1}.
\]
Since coordinate changes are conformal, their derivatives satisfy
$z'(w)\neq 0$.

The canonical bundle $\omega_C=T^*C$ is the holomorphic line bundle
whose transition functions are $z'(w)$. Equivalently, local sections
transform according to
\[
dz=z'(w)\,dw.
\]
\begin{definition}
A spin structure on $C$ is a holomorphic line bundle
$L\to C$ together with an isomorphism
\[
\phi:L^{\otimes 2}\stackrel{\cong} {\longrightarrow}\omega_C.
\]
\end{definition}
In terms of transition functions, a spin structure is obtained by
choosing square roots of the transition functions of $\omega_C$ 
\[
z'(w)^{1/2}
\]
that satisfy the cocycle
conditions on double and triple overlaps. Since $z'(w)\neq0$, a
holomorphic square root exists locally via a choice of branch.

 The choice of square root must be compatible on overlaps:  on $U\cap V$, we have $z'(w)w'(z)=1$, and we choose square roots so that $z'(w)^{\frac12}w'(z)^{\frac12}=1$, and on $U\cap V\cap W$, we have $z'(w)w'(u)u'(z)=1$, then we also choose square roots so that $z'(w)^{\frac12}w'(u)^{\frac12}u'(z)^{\frac12}=1$.
Given any Riemann surface, a spin structure exists, i.e. a compatible choice of square roots may be made.
 If
$L$ and $L'$ are two spin structures, then
\[
L'=L\otimes \eta,
\]
where $\eta$ is a line bundle satisfying
\[
\eta^{\otimes 2}\cong\mathcal O_C.
\]
Hence the set of spin structures is a torsor for $ H^1(C,\bz_2)$. 
In particular, a compact genus $g$ surface $C$ possesses $2^{2g}$ distinct spin structures.  

For a non-compact Riemann surface the line bundle $L$
is holomorphically trivial, but the square-root isomorphism still carries
non-trivial information, classified by $H^1(S,\mathbb Z_2)$.  

Next we describe a construction of a spin structure using a hyperbolic metric.  The conformal and hyperbolic constructions turn out to be equivalent.

\subsubsection{Hyperbolic spin structures}  Spin structures have a particularly nice description in terms of hyperbolic structures on surfaces.  A hyperbolic metric on an orientable surface $\Sigma$ determines a principal $SO(2)$ bundle $P_{SO}(\Sigma)$ given by the orthonormal frame bundle of $\Sigma$.   A {\em hyperbolic spin structure} on $\Sigma$ is a principal $SO(2)$ bundle double cover
\[\pi:P_{\text{Spin}}(\Sigma)\to P_{SO}(\Sigma),\qquad \pi(e^{i\theta}\cdot p)=e^{2i\theta}\cdot\pi(p).\]
Hence a hyperbolic spin structure can be naturally identified with an element 
\begin{equation}  \label{spinlift}
\eta\in H^1(P_{SO}(\Sigma),\bz_2)=\text{Hom}(\pi_1(P_{SO}(\Sigma)),\bz_2)
\end{equation} 
satisfying $r(\eta)\neq0$ in the exact sequence
\[0\to H^1(\Sigma,\bz_2)\to H^1(P_{SO}(\Sigma),\bz_2)\stackrel{r}{\to} H^1(SO(2),\bz_2)\to   0
\]
where the final arrow maps to $0=w_2\in H^2(\Sigma,\bz_2)$. The exact sequence shows that any two hyperbolic spin structures on $\Sigma$ differ by an element of $H^1(\Sigma,\bz_2)$.

A hyperbolic spin structure is elegantly described in terms of the associated Fuchsian representation  
\[\overline{\varrho}:\pi_1\Sigma\to PSL(2,\br).\]  For $\overline{\Gamma}=\overline{\varrho}(\pi_1\Sigma)$,  
\[P_{SO}(\Sigma)=PSL(2,\br)/\overline{\Gamma}\to\bh/\overline{\Gamma}=\Sigma.
\]

\begin{exercise}
Prove that $P_{SO}(\bh)=PSL(2,\br)$.
\end{exercise}

We used $\overline{\varrho}$ above because we will instead consider
\[\varrho:\pi_1\Sigma\to SL(2,\br)\]
with image $\Gamma$
such that the composition $\overline{\varrho}$ of $\varrho$ with the map $SL(2,\br)\to PSL(2,\br)$ is Fuchsian. Any representation $\pi_1\Sigma\to PSL(2,\br)$ lifts (in many ways) to a representation $\pi_1\Sigma\to SL(2,\br)$ and the choice of lift defines a spin structure.  This can be seen from the following realisation of $P_{\text{Spin}}(\Sigma)$:
\[
P_{\text{Spin}}(\Sigma)=SL(2,\br)/\Gamma\to PSL(2,\br)/\overline{\Gamma}=P_{SO}(\Sigma).
\]
Associated to a hyperbolic spin structure is a natural representation
\[\varrho:\pi_1\Sigma\to SL(2,\br)\curvearrowright\br^2   
\] 
which defines a real rank two vector bundle 
\[T_\Sigma^{1/2}\to\Sigma.\]
When $\Sigma\cong C$ is compact, this bundle is related to the spin bundle $L$ satisfying $L^{\otimes 2}\cong \omega_C$ via
\begin{equation}   \label{flatspin}
T_\Sigma^{1/2}\otimes\bc\cong L\oplus L^{\vee}
\end{equation}
where $L^{\vee}$ denotes the dual of $L$.

An elegant description of the conformal construction of spin structures uses twisted curves or orbifolds, which are defined next.

\subsubsection{Twisted curves}

A \textit{twisted curve} with a group $ G $ is a one-dimensional orbifold, or stack, $ \cc $, satisfying the following conditions:
\begin{itemize}
    \item Generic points of $ \cc $ have trivial isotropy groups.
    \item Non-trivial orbifold points of $ \cc $ have isotropy groups isomorphic to $ G $.
\end{itemize}

A twisted curve is equipped with a morphism $ p \colon \cc \to C $, called the \textit{coarse map}, which forgets the orbifold structure. Here, $ C $ is a smooth curve and is referred to as the \textit{coarse curve} of $ \cc $, also denoted $C=|\cc|$.

A good source of examples of twisted curves arises from branched covers.  A branched cover of curves
\[
f:X\to C
\]
is a holomorphic map such that, near each ramification point $p\in X$, there exist local coordinates $z$ centred at $p$ and $w$ centred at $f(p)$ for which
\[
w=z^r,
\]
where $r\ge 2$ is the ramification index at $p$.   Although it is standard to consider the target $C$ to be a curve without orbifold structure, it naturally comes equipped with an orbifold structure, where the branch point $f(p)$ has isotropy group $\bz_r$.
\begin{exercise}
Prove that for any elliptic curve $E=\bc/\Lambda$ the quotient 
\[E/\sigma=(\bp^1,\{p_1,p_2,p_3,p_4\})\] 
for $\sigma(z)=-z$ with fixed points mapping to $p_i$, is a twisted curve (of orbifold Euler characteristic 0).
\end{exercise}

From now on, we only consider twisted curves with group $G=\bz_2$ such that the points $p_i\in\cc$ with non-trivial isotropy group $\bz_2$ are precisely the labeled points.   A local chart around each point $p_i\in\cc$ corresponds to $z\mapsto z^2$. 

\subsubsection{Orbifold line bundles} \label{orbdle} An orbifold line bundle (or simply a line bundle) $\cf$ over a smooth twisted curve $\cc$ is a locally equivariant bundle over the local charts around each point.

At each orbifold point $p$ an orbifold line bundle associates a representation of $\bz_2$ on $\cf|_p$ acting by multiplication by $\pm 1$  which can be seen as follows.  Locally, a $\bz_2$-orbifold point is described by the quotient of a disk $D=\{z:|z|<\epsilon\}$ by the $\bz_2$-action $z\mapsto-z$  which, via $z\mapsto z^2=x$,  maps  to the disk $\{x:|x|<\epsilon^2\}$ with $\bz_2$-orbifold point at $x=0$.  A locally equivariant line bundle is given by a lift of the $\bz_2$-action on $D$ to a $\bz_2$-action on $D\times\bc$.  There are two such lifts up to equivalence represented by  
\[(z,v)\mapsto(-z,v)\qquad\text{or}\qquad(z,v)\mapsto(-z,-v).\]  
Over the orbifold point, the representation is trivial in the first case and non-trivial in the second case.

An important example of an orbifold line bundle $\cf$ over $\cc$ is the canonical bundle $\omega_\cc$.  It is locally generated by $dx$ for any local coordinate $x$.  At an orbifold point $x=z^2$, the canonical bundle $\omega_\cc$ is generated by $dz$ hence the local representation is non-trivial, i.e. $dz\mapsto-dz$ under $z\mapsto -z$.  In particular, it has no square-root $\cf^{\otimes 2}\cong\omega_\cc$ since the square of any representation $\bz_2\to\bz_2$ is trivial.  Define the log canonical bundle of a twisted curve $\cc$ by
\[\omega_{\cc}^{\text{log}}:=\omega_{\cc}(D),\quad D=p_1+...+p_n.\]
The log-canonical bundle $\omega_\cc^{\text{log}}$ is generated by $\frac{dx}{x}=2\frac{dz}{z}\stackrel{z\mapsto-z}{\longmapsto} 2\frac{dz}{z}$ hence the local representation is trivial and a square-root may exist.
Over the coarse curve, $\omega_C$ is generated by $dx=2zdz$ so we see that $\rho^*\omega_C\not\cong\omega_\cc$.  However, $\omega_C\cong \rho_*\omega_\cc$.  Moreover, $\deg\omega_C=2g-2$ and 
$$\deg\omega_\cc=2g-2+\frac12n$$
where half-integer degree exists since non-trivial orbifold points have degree $\frac12$.  
For $\omega_\cc^{\text{log}}=\omega_\cc(p_1+...+p_n)$, since $\frac{dx}{x}=2\frac{dz}{z}$ then $\rho^*\omega_C^{\text{log}}\cong\omega_\cc^{\text{log}}$ and 
\[\deg\omega_C^{\text{log}}=2g-2+n=\deg\omega_\cc^{\text{log}}.\]
\begin{definition}
A spin structure on a twisted pointed curve $\cc$ with isotropy group $\bz_2$ is given by $(\cc,p_1,\ldots,p_n,\cl,\phi)$ where $\cl\to\cc$ is a line bundle equipped with an isomorphism $\phi:\cl^{\otimes 2}\xrightarrow{\cong}\omega_C^{\log}$.
\end{definition}
The spin bundle $\cl$ is an orbifold line bundle over $\cc$ of degree
\[\deg \cl=g-1+\frac12n\]  
which may be a half-integer.

 \subsection{Neveu--Schwarz and Ramond boundary behaviour}   \label{boundbeh}

At a labeled point, or boundary component, $p_j\in\cc$, the behaviour of a spin structure is of one of two types, known as Neveu--Schwarz and Ramond..  This corresponds to the two possible representations $\bz_2\to\bz_2$, non-trivial or trivial, at  a labeled point in the twisted curve construction.  It also corresponds to the two lifts of a peripheral element from $PSL(2,\br)$ to $SL(2,\br)$ in the hyperbolic construction.
\begin{definition} On a twisted curve $\cc$, given a spin structure defined by $\cl^2\cong\omega_\cc^{\log}$ and its uniformising representation $\varrho:\pi_1\Sigma\to SL(2,\br)$, {\em Neveu--Schwarz} behaviour of the spin structure at $p_j\in\cc$, or equivalently $\beta_j\subset\partial\Sigma\cong\cc-\{p_1,...,p_n\}$, is defined by the following equivalent conditions:
\begin{itemize}
\item the representation $\bz_2\to\bz_2$ induced by $\cl$ at $p_j$ is non-trivial;
\item $\tr(\varrho(\beta_j))<0$;
\item the spin structure on $\Sigma$ extends across $p_j$.
\end{itemize}
It is {\em Ramond} if it is not Neveu--Schwarz.
\end{definition}
The relationship between these three conditions can be seen by considering a spin structure on a disk: the tangent frame induced from the boundary circle $S^1\hookrightarrow P_{SO}(\Sigma)$ rotates exactly once relative to a trivialisation over the disk, so the square root rotates by half and defines holonomy of multiplication by $-1$.  Equivalently the element of $H^1(P_{SO}(\Sigma),\bz_2)$ sends it to the non-trivial element of $\bz_2$.  The following exercise shows that the boundary behaviour of a spin structure on a once punctured torus is necessarily of Neveu--Schwarz type, in particular it is independent of the lift of the Fuchsian representation from $PSL(2,\br)$ to $SL(2,\br)$.
\begin{exercise}
For any Fuchsian representation $\varrho:\pi_1(\Sigma)\to SL(2,\br)$ defining a hyperbolic structure on a once punctured torus $\Sigma$, prove that $\tr\varrho(\beta_1)<0$. 
\end{exercise}

\subsubsection{The number of Ramond points is  even.} \label{numram} On any surface with spin structure, the number of Ramond points is always even.  This can be seen from both the conformal and hyperbolic viewpoints.  We will begin with the conformal viewpoint via twisted curves.

Denote the divisor of Neveu--Schwarz points by $NS$ of degree $|NS|$ and the divisor of Ramond points by $R$ of degree $|R|$.  
For $\cl^2\cong\omega_{\cc}^{\text{log}}$, we have 
\[\deg \cl=g-1+\frac12n.\]   Local Neveu--Schwarz sections of the pushforward sheaf are skew-invariant under $z\to-z$ which forces them to vanish.  This decreases the degree by a half at each Neveu--Schwarz point while the degree at each Ramond point is unchanged.  Hence for $L=\rho_*\cl$ (or more precisely the sheaf of local sections pushes forward to the sheaf $\co_C(L)=\rho_*\co_{\cc}(\cl)$) we have
\[\deg L=g-1+\frac12n-\frac12|NS|=g-1+\frac12|R|.\]
Since the degree of any divisor on the coarse curve must be an integer this shows that the number $|R|$ of Ramond boundary components is even.   This evenness argument often appears entirely over the coarse curve, and we give it here to show that it is equivalent.   The skew-invariance argument above shows that a spin bundle, satisfying $\cl^2\cong\omega_{\cc}^{\text{log}}$ over $\cc$, pushes forward to a bundle $L=\rho_*\cl$ on the coarse curve $C$ that satisfies
\begin{equation}   \label{coarspin}
L^{\otimes 2}\cong \omega_C^{\text{log}}(-NS)=\omega_C(R)
\end{equation}
then taking degrees gives
\[
2\deg L=\deg \omega_C+|R|=(2g-2)+\deg R.
\]
hence the number of Ramond points $|R|$ is even.

Evenness of $|R|$ can also be proven via the following homological argument. 
A spin structure on a surface $C$ determines a quadratic form
\[
q:H_1(C,\mathbb Z_2)\to\mathbb Z_2
\]
satisfying
\begin{equation}  \label{quadadd}
q(a+b)=q(a)+q(b)+a\cdot b,
\end{equation}
where $a\cdot b$ denotes the mod $2$ intersection number.

The quadratic form naturally associated to a spin structure, due to Johnson \cite{JohSpi}, is defined as follows. Let
\[
[C]\in H_1(\Sigma,\mathbb Z_2)
\]
be represented by a finite collection of pairwise disjoint embedded oriented closed curves
\[
C=C_1\sqcup\cdots\sqcup C_n.
\]
Define a map
\[
\ell:H_1(\Sigma,\mathbb Z_2)\longrightarrow
H_1(P_{SO}(\Sigma),\mathbb Z_2)
\]
by
\[
\ell([C])
=
n\,\sigma+\sum_{i=1}^n \widetilde C_i,
\]
where $\sigma$ is the image of the generator of
\[
H_1(SO(2),\mathbb Z_2)
\]
under the inclusion of a fibre
\[
SO(2)\hookrightarrow P_{SO}(\Sigma),
\]
and $\widetilde C_i$ is the lift of $C_i$ to the orthonormal frame bundle using its tangential framing.

The map $\ell$ is well defined on homology. Indeed, it is invariant under isotopy of the representative curves, vanishes on the boundary of a disk (whose tangential lift represents $\sigma$), and is unchanged under the local surgery that replaces a crossing by embedded curves. Consequently, $\ell([C])$ depends only on the homology class $[C]$.
As described in \eqref{spinlift}, this defines a spin structure via a cohomology class
$
\eta\in H^1(P_{SO}(\Sigma),\mathbb Z_2)
$
satisfying $\eta(\sigma)=1.$

The associated quadratic form is then defined by
\[
q_\eta:H_1(\Sigma,\mathbb Z_2)\to\mathbb Z_2,
\qquad
q_\eta=\eta\circ\ell.
\]
It is straightforward to verify that $q_\eta$ is a quadratic form:
\[
q_\eta(a+b)
=
q_\eta(a)+q_\eta(b)+a\cdot b,\qquad
\forall
a,b\in H_1(\Sigma,\mathbb Z_2).
\]
Moreover, the correspondence
\[
\eta\longmapsto q_\eta
\]
is an isomorphism of affine $H^1(\Sigma,\mathbb Z_2)$-spaces between the set of spin structures on $\Sigma$ and the set of quadratic forms.

Neveu--Schwarz and Ramond boundary classes of a spin structure are natural in terms of the quadratic form of a spin structure.  
This follows from considering spin structures on the boundary of a disk $D$.  Equip $D$ with its unique spin structure. Its restriction to the boundary is of Neveu--Schwarz  type. Since $D$ is contractible, its orthonormal frame bundle is trivial:
\[
P_{SO}(D)\cong D\times SO(2).
\]
The tangential framing of the boundary circle $\partial D$ has winding number $1$ relative to this trivialisation. Consequently, its tangential lift
\[
\widetilde{\partial D}\subset P_{SO}(D)
\]
represents the nontrivial class in the fibre direction, and hence the spin structure $\eta$ satisfies
\[
\eta(\widetilde{\partial D})=1.
\]
By definition of Johnson's map,
\[
\ell([\partial D])=\sigma+\widetilde{\partial D},
\]
where $\sigma$ denotes the generator of
\[
H_1(SO(2),\mathbb Z_2).
\]
Since $\eta(\sigma)=1$, it follows that
\[
q([\partial D])
=
\eta\!\bigl(\ell([\partial D])\bigr)
=
\eta(\sigma+\widetilde{\partial D})
=
\eta(\sigma)+\eta(\widetilde{\partial D})
=
1+1
=
0
\]
in $\mathbb Z_2$.
Thus the quadratic form of a spin structure that extends to a disk vanishes on the boundary of a disk:
\[
q([\partial D])=0
\]
so this holds precisely for Neveu--Schwarz boundary components
and we conclude that for any boundary class $[\gamma]\in H_1(\Sigma)$:
\begin{align*}
q([\gamma])=0&\quad\Leftrightarrow\quad \gamma\text{ is Neveu--Schwarz}\\
q([\gamma])=1&\quad\Leftrightarrow\quad \gamma\text{ is Ramond}.
\end{align*}

Let $\gamma_i$, $i=1,...,n$ be the boundary classes represented by small positively oriented loops around the boundary components $\beta_i$ of $\Sigma$. Since the union of the boundary circles bounds a surface, their homology classes satisfy
\[
[\gamma_1]+\cdots+[\gamma_n]=0
\]
in $H_1(\Sigma,\mathbb Z_2)$.
Applying the quadratic form to this relation yields
\[
0=q\bigl([\gamma_1]+\cdots+[\gamma_n]\bigr).
\]
Since the loops $\gamma_i$ are pairwise disjoint, all intersection numbers vanish:
\[
[\gamma_i]\cdot[\gamma_j]=0,
\qquad i\neq j.
\]
Using the additive property \eqref{quadadd} repeatedly gives
\[
q\bigl([\gamma_1]+\cdots+[\gamma_n]\bigr)
=
q([\gamma_1])+\cdots+q([\gamma_n]).
\]
Hence
\[
0
=
q([\gamma_1])+\cdots+q([\gamma_n])
=
1+\cdots+1+0+\cdots+0
=
|R|
\pmod 2.
\]
Thus the number of Ramond points of a spin structure is necessarily even.
 
\subsection{Moduli space ${\cal M}_{g,n}^{\text{spin}}$}

In this section we define the moduli space ${\cal M}_{g,n}^{\text{spin}}$ for $2g-2+n>0$, which parametrises genus $g$, $n$-pointed, spin, Riemann surfaces up to conformal equivalence.  We begin with different equivalent descriptions of the moduli space of Riemann surfaces ${\cal M}_{g,n}$, one of the central objects in geometry.   We define ${\cal M}_{g,n}$ in different 
ways, where in each case roughly speaking, a point of
${\cal M}_{g,n}$ parametrises an equivalence class of a surface together with
additional geometric data.

\begin{definition}
A \emph{genus $g$, $n$-pointed Riemann surface} is a pair
\[
(C;p_1,\ldots,p_n),
\]
\begin{itemize}
\item $C$ is a compact connected Riemann surface of genus $g$,
\item $p_1,\ldots,p_n\in C$ are distinct labeled points.
\end{itemize}
Two such objects
$(C;p_1,\ldots,p_n)$ and $(C';p_1',\ldots,p_n')$
are equivalent if there exists a biholomorphic map
$f:C\to C'$ satisfying
$f(p_i)=p_i'$ for all $i$.
\end{definition}

The moduli space ${\cal M}_{g,n}$ is the set of equivalence classes of
such $n$-pointed Riemann surfaces.  
\[{\cal M}_{g,n}=\{(C,p_1,...,p_n)\mid \text{genus }g,\ n\text{-pointed Riemann surface }C\}/\sim.\]
When
\[
2g-2+n>0,
\]
the topological surface $\Sigma\cong C-\{p_1,\ldots,p_n\}$ admits a unique complete hyperbolic metric in the conformal
class of $C$, with a cusp at each labeled point.  Consequently, one may
equivalently regard ${\cal M}_{g,n}$ as the moduli space of complete hyperbolic surfaces of genus $g$ with $n$ labeled cusps up to isometry.  Note that a neighbourhood of a cusp is also known as a boundary component and denoted $\beta_i\subset\partial \Sigma$.
\[
{\cal M}_{g,n}=\Big\{(\Sigma,\beta_1,...,\beta_n)\mid \Sigma \text{ complete, oriented genus }g\text{ hyp.} \text{ surf., }\\ \partial \Sigma=\sqcup\beta_i\Big\}/\sim
\]
The equivalence of these two descriptions of ${\cal M}_{g,n}$ is a consequence of the uniformisation theorem.

\vspace{0.2cm}

To describe the geometry of ${\cal M}_{g,n}$ it is useful to introduce
\emph{Teichm\"uller space}.  Let $\Sigma$ be a fixed oriented
topological surface of genus $g$ with $n$ labeled points $x_1,...,x_n$.

\begin{definition}
A \emph{marking} of an $n$-pointed Riemann surface $C$ is an orientation-preserving
homeomorphism
\[
f:\Sigma\to C
\]
that sends labeled points to  labeled points: $f(x_i)=p_i$.
Two marked surfaces $(C,f)$ and $(C',f')$ are equivalent if there
exists a biholomorphism $h:C\to C'$ such that
$f'$ is isotopic to $h\circ f$.
\end{definition}

The set of equivalence classes of marked surfaces is the
\emph{Teichm\"uller space} $T_{g,n}$. 
Unlike ${\cal M}_{g,n}$, which is generally an orbifold,
$T_{g,n}$ is a smooth contractible manifold of real dimension
\[
\dim T_{g,n}=6g-6+2n.
\]
The mapping class group
\[
\Gamma_{g,n}=\pi_0(\mathrm{Homeo}^+(\Sigma,x_1,...,x_n))
\]
acts on $T_{g,n}$ by changing the marking, and the moduli space is the
quotient
\[
{\cal M}_{g,n}=T_{g,n}/\Gamma_{g,n}.
\]

\vspace{0.2cm}

A third description of the moduli space via its universal cover uses representations of the fundamental group.
Given a complete hyperbolic structure on $\Sigma$, its universal cover
is the hyperbolic plane $\mathbb H^2$, and the deck transformations
define a representation
\[
\varrho:\pi_1(\Sigma)\to PSL(2,\mathbb R),
\]
called the \emph{holonomy representation}.
This leads to the character variety
\[
\mathcal X_{g,n}
=
\mathrm{Hom}\!\left(\pi_1(\Sigma),PSL(2,\mathbb R)\right)
/PSL(2,\mathbb R),
\]
whose points are conjugacy classes of representations.

The representations arising from hyperbolic structures, known as {\em Fuchsian}, are precisely
the discrete and faithful representations whose boundary classes, given by peripheral loops around
the punctures, are parabolic.  Note that any closed curve $\gamma\subset \Sigma$ corresponds to a conjugacy class $[\gamma]\subset\pi_1\Sigma$.  A conjugacy class in $PSL(2,\br)$ is {\em parabolic} if any representative $A\in PSL(2,\br)$ satisfies $|\tr(A)|=2$ and {\em hyperbolic} if any representative $A\in PSL(2,\br)$ satisfies $|\tr(A)|>2$.   Thus  every non-trivial element in the image of a Fuchsian representation is represented by a hyperbolic or parabolic isometry of
$\mathbb H^2$, so that
\[
|\operatorname{tr}(\varrho(\gamma))|\ge 2
\qquad
\text{for all }\gamma\in\pi_1(\Sigma).
\]
 The corresponding connected component of the character variety is the
\emph{Teichm\"uller component}.  The holonomy map identifies
Teichm\"uller space with this component,
\[
T_{g,n}
\cong
\mathcal X^{\mathrm{Teich}}_{g,n}.
\]
Thus, Teichm\"uller space may be viewed either as the space of
\begin{itemize}
\item marked Riemann surfaces,
\item marked complete hyperbolic structures,
\item discrete faithful $PSL(2,\mathbb R)$ representations of
$\pi_1(\Sigma)$.
\end{itemize}
The forgetful map that discards the marking gives the orbifold covering
\[
T_{g,n}\longrightarrow {\cal M}_{g,n}.
\] 
 
\subsubsection{Moduli space of spin curves} 
Given a compact complex curve or equivalently a compact Riemann surface $C\cong\Sigma$, define the moduli space of spin curves and the moduli space of spin hyperbolic surfaces by:
\begin{align*}
{\cal M}_{g}^{\rm spin}=&\{(C,L,\phi)\mid g(C)=g,\phi:L^2\stackrel{\cong}{\longrightarrow}\omega_{C}\}/\sim\\
\cong
&\Big\{(\Sigma,T_\Sigma^{1/2})\mid\Sigma\text{ oriented hyperbolic},\ g(\Sigma)=g\Big\}/\sim
\end{align*}
where the first quotient is by conformal maps and the second by isometries, and $P_{\text{Spin}}(\Sigma)$ is the principal bundle of the rank two vector bundle $T_\Sigma^{1/2}$. The relationship between the two descriptions is obtained by taking the unique hyperbolic metric on $\Sigma$ in the conformal class of $C$ and using \eqref{flatspin} to identify spin structures.

Now introduce labeled points, or boundary components.  The definition of the moduli space of spin hyperbolic surfaces is almost unchanged from the $n=0$ case, with the added condition of completeness of the metric:
\begin{align*}
{\cal M}_{g,n}^{\rm spin}=\Big\{(\Sigma, T_\Sigma^{1/2},\beta_1,...,\beta_n)\mid \Sigma \text{ complete, oriented}&  \text{ hyperbolic surface,}\\
& g(\Sigma)=g,\ \partial \Sigma=\sqcup\beta_i\Big\}/\hspace{-1mm}\sim
\end{align*}
The spin Teichm\"uller space $\ct_{g,n}^{\rm spin}$ is given by the Fuchsian component of the spin character variety
\begin{align*}
\mathcal X_{g,n}^{\rm spin}=\Big\{\varrho:\pi_1(\Sigma)\to SL(2,\br)\mid |\text{tr}(\varrho(\gamma))|&\geq 2,\ \forall\gamma\in\pi_1(\Sigma),
\\
 &g(\Sigma)=g,\ \partial \Sigma=\sqcup\beta_i\Big\}/\hspace{-1mm}\sim
\end{align*}
where a representation is Fuchsian if its projection to $PSL(2,\br)$ is discrete and faithful, and the quotient is by conjugation.  

The corresponding construction of ${\cal M}_{g,n}^{\rm spin}$ for $n>0$ using the conformal construction of spin structures is most elegantly described in terms of twisted curves.   
\begin{definition}
The moduli space of spin curves is
\[
{\cal M}_{g,n}^{\mathrm{spin}}
=
\left\{
(\cc,p_1,\ldots,p_n,\cl,\phi)
\;\middle|\;
\phi:\cl^{\otimes 2}\xrightarrow{\cong}\omega_\cc^{\log}
\right\}/\!\sim
\]
where each $\cc$ is a twisted curve with isotropy group $\bz_2$.
\end{definition}
The formulation of the moduli space of spin curves over the coarse curve $C=|\cc|$ replaces $\omega_\cc^{\log}$ with twists of $\omega_C$, given in \eqref{coarspin},  by divisors determined by Neveu--Schwarz and Ramond behaviour at the marked points and nodes. 

Denote the moduli space of twisted curves with $\mathbb{Z}_{2}$ isotropy by ${\cal M}^{(2)}_{g,n}$.  The natural forgetful map ${\cal M}_{g,n}^{\mathrm{spin}}\to{\cal M}^{(2)}_{g,n}\to{\cal M}_{g,n}$ that forgets the spin structure and twisted curve structure is denoted by
\[ p:{\cal M}_{g,n}^{\mathrm{spin}}\to{\cal M}_{g,n}.
\]

The spin Teichm\"uller space is no longer connected, due to different boundary behaviour of the spin structure described in Section~\ref{boundbeh}.  Each connected component is naturally homeomorphic to the usual Teichm\"uller space $\ct_{g,n}$ via the natural quotient map $SL(2,\br)\to PSL(2,\br)$ which forgets the spin structure.

\subsubsection{Components of the moduli space of spin curves}  \label{spincomp}
The moduli space of twisted spin curves decomposes into components determined by the boundary behaviour:
\[
{\cal M}_{g,n}^{\rm spin}=\bigsqcup_{\sigma\in\{0,1\}^n}{\cal M}_{g,\sigma}^{\rm spin}
\]
where $\sigma_j=0$ for Neveu--Schwarz $p_j$ and $\sigma_j=1$ for Ramond $p_j$. The disjoint union is over all $\sigma$ satisfying $|\sigma|$ is even since the number of Ramond points is even.  Each component ${\cal M}_{g,\sigma}^{\rm spin}$ is connected except for $|\sigma|=n$ when it is the union of two components.  

When all boundary classes of a spin structure are Neveu--Schwarz it is the pull-back under
$\rho:\cc\longrightarrow C$ of a spin structure on the coarse curve $C=|\cc|$.
Equivalently, for $\Sigma=C\setminus\{p_1,\ldots,p_n\}$, 
the spin structure extends to $C$. 
Consequently, the associated quadratic form
\[
q:H_1(\Sigma,\bz_2)\longrightarrow\bz_2
\]
is the pull-back of a quadratic form on the symplectic vector space $H_1(C,\bz_2)$.  
The {\em Arf invariant} of a quadratic form $q$ defined on a symplectic vector space over $\bz_2$ is a $\bz_2$-valued invariant defined by
\[\text{Arf}(q)=\sum_{i=1}^g q(\alpha_i)q(\beta_i)\]
for any standard symplectic basis $\{\alpha_1,\beta_1,...,\alpha_g,\beta_g\}$ 
meaning that $(\alpha_i,\beta_j)=\delta_{ij}$, $(\alpha_i,\alpha_j)=0=(\beta_i,\beta_j)$.  This is independent of the choice of $\{\alpha_i,\beta_i\}$.  A spin structure is {\em even} if its quadratic form has even Arf invariant and {\em odd} if its quadratic form has odd Arf invariant.   This gives rise to two connected components determined by their Arf invariant,  when $|\sigma|=n$.

\section{Sheaf cohomology over spin surfaces}  \label{sheafc}
The moduli space of spin curves comes equipped with a natural vector bundle and a natural Euler class.  This gives rise to a natural differential form on the moduli space, and a cohomology class after compactification, which pairs with the Weil--Petersson form to produce interesting new measures on the moduli space.  Measures arise here as top dimensional differential forms.  In this section we define the bundle in two equivalent ways, via the conformal viewpoint and the hyperbolic viewpoint.

\subsection{Local systems and holomorphic spin bundles.}   \label{sheafcoh}

For $\Sigma\cong\cc-D$, the flat structure on $T_\Sigma^\frac12$ defines a holomorphic structure on a rank two bundle over $\cc$.  When $D=\emptyset$, so that $\Sigma$ is compact, there is an isomorphism of holomorphic vector bundles 
\[T_\Sigma^\frac12\otimes\bc\cong \cl\oplus \cl^\vee\]
for $\cl^{\otimes 2}\cong\omega_\cc$.
The retraction of $SL(2,\br)$ to $SO(2)\subset SL(2,\br)$ allows the reduction of the structure group of any $SL(2,\br)$ bundle $E$ to an $SO(2)$ bundle $F$, thus a holomorphic line bundle, and there is a natural isomorphism $E\otimes\bc\cong F\oplus F^\vee$.  Using this together with the decomposition of a holomorphic vector bundle into unique decomposables \cite{AtiKru} produces the isomorphism of real rank two bundles:
\[T_\Sigma^\frac12\cong \cl^\vee.\] 
For general $D$, there is a canonical extension of $T_\Sigma^\frac12\otimes\bc$ from a bundle over $\Sigma$ to a rank two holomorphic vector bundle $E_\Sigma\to\cc$.  It is obtained from a generalisation of the extension over smooth curves due to Deligne \cite{DelEqu}.  Again there is an isomorphism of holomorphic vector bundles $E_\Sigma\cong \cl\oplus \cl^\vee$. 
One can prove this in two ways, by showing that Deligne's local extension over a disk above an orbifold point is equivariant hence applies to twisted curves.   Or, by applying Deligne's canonical extension over the coarse curve $p:\cc\to C$ to produce the bundle $L\oplus L^\vee(-NS)$ over $C$, for $L=C_*\cl$, which is the pushforward of $\cl\oplus \cl^\vee$.  Recall that $NS$ is the divisor of Neveu-Schwartz points and $\rho_*(\cl^\vee)=(\rho_*\cl)^\vee(-NS)$.

The sheaf of locally constant sections of $T_\Sigma^\frac12$ defines sheaf cohomology groups which we denote 
$H^j_{dR}(\Sigma,T_\Sigma^\frac12)$, for $j=0,1,2$ since they can be calculated via a twisted de Rham complex using the flat connection.  

The sheaf of locally holomorphic sections of $ \cl^\vee$ defines sheaf cohomology groups $H^j(\cc, \cl^{\vee})$ for $j=0,1$, and $H^0(\cc, \cl^{\vee})=0$, the space of global holomorphic sections is trivial, since $\deg \cl^{\vee}<0$.  We will see that they have similar behaviour to the sheaf cohomology groups $H^j_{dR}(\Sigma,T_\Sigma^\frac12)$.

An element of $H^0_{dR}(\Sigma,T_\Sigma^\frac12)$ corresponds to a horizontal section or equivalently an invariant vector of the Fuchsian representation $\varrho$, which is an eigenvector with eigenvalue one, $\varrho(\gamma)v=v$ for all $\gamma\in\pi_1(\Sigma)$.  But $|\tr(\varrho(\gamma))|>2$ for some $\gamma\in\pi_1(\Sigma)$ which implies $v=0$, hence 
\[H^0_{dR}(\Sigma,T_\Sigma^\frac12)=0.\]  
Also, 
\[H^2_{dR}(\Sigma,T_\Sigma^\frac12)=0\] 
which uses a retraction of $\Sigma$ to a one-dimensional spine
when $\Sigma$ is non-compact, hence any $H^2$ vanishes.  When $\Sigma$ is compact it uses Poincar\'e duality
\[H^2_{dR}(\Sigma,T_\Sigma^\frac12)\cong H_0(\Sigma,T_\Sigma^\frac12)\cong H^0_{dR}(\Sigma,(T_\Sigma^\frac12)^\vee)^\vee= 0.\] 
The final equality uses the fact that any non-zero invariant vector in the dual vector space gives a non-trivial invariant one-dimensional subspace of the representation $\varrho$, contradicting irreducilbility.

If all boundary points are Neveu--Schwarz, then there is a canonical isomorphism
\begin{equation}  \label{canis}
H^1_{dR}(\Sigma,T_\Sigma^\frac12)\cong H^1(\cc, \cl^\vee)^\vee
\end{equation} 
proven in \cite{NorEnu} using a theorem of Simpson applied to the (complex) rank two bundle $L\oplus L^\vee$ equipped with a natural Higgs field for $L=\rho_*\cl$.

When there are Ramond points, the isomorphism \eqref{canis} no longer holds.  It is instead replaced by an exact sequence.  To prove this generalisation, we compute the sheaf cohomology
$H^k_{\mathrm{dR}}(\Sigma,T_\Sigma^{1/2})$.  
Following \cite{NorEnu,SWiJTG}, we instead compute the twisted homology of a dual cellular complex 
\[
H_k(\Sigma,T_\Sigma^{1/2})^\vee
\cong
H^k_{\mathrm{dR}}(\Sigma,T_\Sigma^{1/2})
\]
where the isomorphism uses the symplectic structure on $T_\Sigma^{1/2}$.  The twisted homology groups $H_k(\Sigma,T_\Sigma^{1/2})$ can be defined as follows.  Given an ideal triangulation $\ct$ of $\Sigma$, for each cell $\sigma\in\ct$, let
\[
\cv_\sigma
=
H^0(\sigma,T_\Sigma^{1/2})
\]
denote the space of covariantly constant sections of $T_\Sigma^{1/2}$ pulled back over  $\sigma$.  Define the cellular chain groups by
\[
C_k(\Sigma,T_\Sigma^{1/2})
=
\bigoplus_{\sigma\in\ct_k}\cv_\sigma.
\]
The boundary maps are defined by
\[
\begin{array}{rcl}
C_{k+1}(\Sigma,T_\Sigma^{1/2})
&\stackrel{\partial}{\longrightarrow}&
C_k(\Sigma,T_\Sigma^{1/2})\\[0.3cm]
s|_\sigma
&\longmapsto&
s|_{\partial\sigma}
=
\displaystyle\bigoplus_i
(-1)^{\epsilon_i}s|_{\sigma_i},
\end{array}
\]
where
\[
\partial\sigma
=
\bigcup_i
(-1)^{\epsilon_i}\sigma_i
\]
is the usual oriented boundary decomposition into codimension-one cells.


The identity
\[
\partial^2=0
\]
follows from the usual cancellation of codimension-two faces. For a $2$-cell, the contribution at each vertex consists of the same covariantly constant section, transported to the vertex along the two incident edges, appearing with opposite signs because of the orientations. Equivalently, this is the vanishing of the square of the ordinary simplicial boundary map. The same argument applies in higher dimensions, where each codimension-two cell appears twice with opposite orientations. Since the connection is flat, parallel transport around the boundary of a sufficiently small cell is trivial, so the two extensions agree.

We define $H_k(\Sigma,T_\Sigma^{1/2})$ to be the homology of the resulting chain complex
\begin{equation}  \label{twicom}
C_2(\Sigma,T_\Sigma^{1/2})
\stackrel{\partial}{\longrightarrow}
C_1(\Sigma,T_\Sigma^{1/2})
\stackrel{\partial}{\longrightarrow}
C_0(\Sigma,T_\Sigma^{1/2}).
\end{equation}

The following proposition uses the notion of a graph connection which we define here.
\begin{definition}
A \emph{ribbon graph} is a graph equipped with a cyclic ordering of the half-edges incident at each vertex.
Equivalently, it is an embedding
\[
\Gamma\hookrightarrow S
\]
of a graph into an oriented surface $S$ such that each connected component of $
S\setminus\Gamma$
is an open disk. The orientation of $S$ induces a cyclic ordering of the half-edges incident at every vertex.

Combinatorially, a ribbon graph is a triple
\[
(X,\sigma_0,\sigma_1),
\]
where $X$ is the set of oriented edges, $\sigma_1:X\to X$ is a fixed-point-free involution reversing the orientation of each edge, and $\sigma_0:X\to X$ is a permutation whose cycles give the cyclic ordering of oriented half-edges around each vertex.
The vertices of the ribbon graph are the orbits
\[
V=X/\sigma_0,
\]
and the edges are the orbits
\[
E=X/\sigma_1.
\]
The boundary components (or faces) are the orbits of the permutation
\[
\sigma_2=\sigma_0^{-1}\sigma_1.
\]
\end{definition}
\noindent To an oriented edge $e\in X$, we denote its source and target vertices by 
\[e_-:=\pi_0(e),\qquad e_+:=\pi_0\circ\tau_1(e)\] 
where $\pi_0:X\to X/\tau_0$ is the quotient map.
\begin{definition}
A \emph{graph connection} with group $G$ on a ribbon graph $(X,\sigma_0,\sigma_1)$  is a map
\[
g:X\longrightarrow G,\qquad g\circ\sigma_1=g^{-1}.
\]
Two graph connections are equivalent under \emph{gauge transformations} 
\[
h:X\longrightarrow G,\qquad h\circ\sigma_0=h
\]
which act by 
\[
g\longmapsto h\circ\sigma_1\cdot g\cdot h^{-1}.
\]
\end{definition}

\begin{thm}   \label{exactseq}
Given a spin structure $\cl^2\cong\omega_\cc^{\log}$ with Ramond points $R\subset\cc$ and a representation $\varrho:\pi_1(\Sigma)\to SL(2,\br)$ which defines a conformally equivalent complete hyperbolic spin structure and the bundle $T_\Sigma^\frac12\to\Sigma=\cc-\{p_1,...,p_n\}$, there is an exact sequence of real vector spaces
\begin{equation}   \label{ramex}
0\to\br^R\to H^1_{dR}(\Sigma,T_\Sigma^\frac12)\to H^1(\cc, \cl^\vee)^\vee\to0.
\end{equation}
\end{thm}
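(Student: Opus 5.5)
The plan is to obtain \eqref{ramex} from the long exact sequence relating cohomology of $\Sigma$ to cohomology near its cusps. The non-trivial input is an identification of the resulting \emph{parabolic} cohomology with $H^1(\cc,\cl^\vee)^\vee$, extending the Neveu--Schwarz isomorphism \eqref{canis}. A Riemann--Roch dimension count then serves as a consistency check.

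\emph{Step 1: local behaviour at the cusps.} Let $\gamma_j$ be a peripheral loop around $p_j$. Since $\varrho(\gamma_j)$ is parabolic, it is conjugate to $\pm\begin{pmatrix}1&1\\0&1\end{pmatrix}$, with sign $-$ exactly at Neveu--Schwarz points, where $\tr<0$.
\begin{itemize}
\item At a Ramond point the monodromy is non-trivial unipotent. Hence $H^0(S^1,T_\Sigma^{1/2})\cong\br$ (invariants) and $H^1(S^1,T_\Sigma^{1/2})\cong\br$ (coinvariants).
\item At a Neveu--Schwarz point, $-1$ times a unipotent has neither invariants nor coinvariants, so both groups vanish.
\end{itemize}

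\emph{Step 2: the long exact sequence of the pair.} Using a neighbourhood $N=\sqcup_j N_j$ of the cusps, the pair $(\Sigma,N)$ gives the exact sequence
\[
H^0_{dR}(\Sigma,T_\Sigma^{1/2})\to\bigoplus_j H^0(N_j,T_\Sigma^{1/2})\to H^1_c(\Sigma,T_\Sigma^{1/2})\to H^1_{dR}(\Sigma,T_\Sigma^{1/2})\to\bigoplus_j H^1(N_j,T_\Sigma^{1/2}).
\]
The first term vanishes, as shown above, and Step 1 identifies the second term with $\br^R$. This yields
\[
0\to\br^R\to H^1_c(\Sigma,T_\Sigma^{1/2})\to H^1_{par}\to0,
\]
where $H^1_{par}$ denotes the image of $H^1_c$ in $H^1_{dR}$.

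Next, Poincar\'e duality together with the symplectic form on $T_\Sigma^{1/2}$ gives $H^1_c(\Sigma,T_\Sigma^{1/2})\cong H^1_{dR}(\Sigma,T_\Sigma^{1/2})^\vee$. Under this duality, the cellular description \eqref{twicom} via the ideal triangulation (where $H_1$ computes $H^1_{dR}$ dually) matches $H^1_c$ with the group appearing in the middle of \eqref{ramex}. Transporting the sequence across this identification puts it in the form \eqref{ramex}. One must check that the identification is consistent with the conventions of \eqref{twicom}; if it is not, the dual sequence $0\to H^1_{par}\to H^1_{dR}\to\br^R\to0$ is the relevant one, and the argument is symmetric.

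\emph{Step 3: the main obstacle.} It remains to prove $H^1_{par}\cong H^1(\cc,\cl^\vee)^\vee$. I would follow the Neveu--Schwarz argument of \cite{NorEnu}, now using Simpson's correspondence for \emph{parabolic} Higgs bundles, or equivalently Zucker's theorem that $L^2$ cohomology equals parabolic cohomology.

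The data are as follows. The local system corresponds to the Deligne extension $E_\Sigma\cong\cl\oplus\cl^\vee$ with its natural Higgs field $\cl\to\cl^\vee\otimes\omega_\cc^{\log}$, which comes from $\cl^2\cong\omega_\cc^{\log}$. The hypercohomology of the $L^2$ Dolbeault complex then splits, and its $(0,1)$-type piece is $H^1(\cc,\cl^\vee)$, or its Serre dual $H^0(\cc,\omega_\cc\otimes\cl)$, depending on conventions.

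The delicate point is the Ramond cusps. There the Higgs field has nilpotent residue and the parabolic weights are trivial. One must verify that the $L^2$ condition selects exactly the sections of $\cl^\vee$ on the twisted curve, with no extra twist at Ramond points. This can be done either equivariantly on the orbifold chart $z\mapsto z^2$, or on the coarse curve, where $\rho_*\cl^\vee=L^\vee(-NS)$.

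\emph{Step 4: dimension check.} Since $H^0_{dR}=H^2_{dR}=0$, we have
\[
\dim H^1_{dR}=-2\chi(\Sigma)=2(2g-2+n).
\]
On the coarse curve, $\deg\rho_*\cl^\vee=-(g-1+\tfrac12|R|)-|NS|$ and $H^0$ vanishes, so Riemann--Roch gives $h^1(\cc,\cl^\vee)=2g-2+\tfrac12|R|+|NS|$. The real dimension of $H^1(\cc,\cl^\vee)^\vee$ is therefore $4g-4+|R|+2|NS|$, and the difference from $\dim H^1_{dR}$ is exactly $|R|$. This confirms the sequence. It also means that in Step 3 it suffices to construct a surjection, or an injection, between $H^1_{par}$ and $H^1(\cc,\cl^\vee)^\vee$ compatible with Step 2.
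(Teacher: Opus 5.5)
Your proposal is correct, and it reaches the sequence by a different route from the paper.

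\textbf{The paper's route.} The paper works entirely inside the twisted cellular complex of an ideal triangulation. It builds a trivalent ribbon graph $\Gamma$ with a dimer $\cd$, using separate constructions for $g\geq1$ and $g=0$, so that $\ker\delta\cong(\br^2)^{\cd}$ has dimension $2(2g-2+n)$. For each Ramond loop it writes down an explicit cycle carrying the fixed vector of the unipotent holonomy, with a gauge-fixing trick on paired edges. It proves these $|R|$ cycles are independent because parabolics at distinct cusps have distinct fixed points in $\partial\bh$. It then concludes by orbifold Riemann--Roch and a dimension count, without ever constructing a map to $H^1(\cc,\cl^\vee)^\vee$.

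\textbf{Your route.} You replace the dimer computation by $\dim H^1_{dR}=-2\chi(\Sigma)$. You replace the explicit Ramond cycles and their independence argument by the connecting map $\bigoplus_j H^0(N_j,T_\Sigma^{1/2})\to H^1_c$, which is injective for free because $H^0_{dR}=0$. The two constructions agree. The paper's $\ker\delta$ is $H_1(\Gamma,T_\Sigma^{1/2})\cong H^1_c(\Sigma,T_\Sigma^{1/2})$, and its Ramond solutions are exactly the image of your connecting map.

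\textbf{Step 3 is not needed.} Your Step 4 already finishes the proof at the paper's level of precision, since equal dimensions give an isomorphism $H^1_{par}\cong H^1(\cc,\cl^\vee)^\vee$. What Step 3 would add is canonicity. It is the Eichler--Shimura isomorphism for weight-three cusp forms, i.e.\ $3/2$-differentials, with the Ramond cusps being the regular ones. Equivalently, it is the parabolic version of the Simpson argument behind \eqref{canis}. This is precisely the canonicity the paper leaves open.

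\textbf{Your hedge in Step 2 is well founded.} Canonically, $\br^R$ is a subspace of $H^1_c$ and a quotient of $H^1_{dR}$, and its image under $H^1_c\to H^1_{dR}$ is zero. So the form \eqref{ramex}, with $H^1_{dR}$ in the middle, needs a non-canonical identification or splitting. This is harmless for the existence statement, but it matters for the bundle-level question $E_{g,\sigma}\cong E^F_{g,\sigma}/\co^{\oplus R}$.

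\textbf{What each approach buys.} Yours is shorter, uniform in genus, and shows where $\br^R$ naturally lives. The paper's gives explicit graph-level representatives and a dimer frame, which is the kind of data one would want for computing the measure in Penner-type coordinates.
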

\begin{proof}
The proof is divided into six parts.  It generalises the proof of the Neveu--Schwarz case \eqref{canis} from \cite{NorEnu} and necessarily deviates from that proof in parts 3, 5 and 6.
\begin{enumerate}
\item[1.] $H^1_{dR}(\Sigma,T_\Sigma^\frac12)$ in terms of graph connections.
\item[2.] Holonomy with no non-trivial fixed points.
\item[3.] Holonomy with non-trivial fixed points.
\item[4.] Calculation of $H^1(\cc,L^\vee)$.
\item[5.] Construction of Ramond boundary solutions.
\item[6.] Linear independence of Ramond boundary solutions.
\end{enumerate}
\noindent {\em 1. $H^1_{dR}(\Sigma,T_\Sigma^\frac12)$ in terms of graph connections.} If $R=\emptyset$ then \eqref{ramex} follows from \eqref{canis} proven in \cite{NorEnu} so we assume that $R\neq\emptyset$.  Choose an ideal triangulation of $\Sigma$, which is necessarily non-compact. Then only  $C_1$ and $C_2$, the 1-chains and 2-chains, appear in the complex \eqref{twicom} defining
$H_k(\Sigma,T_\Sigma^{1/2})$.  The dual triangulation is a trivalent ribbon graph $\Gamma$.  We choose the ideal triangulation so that 
that there exists a collection of $2g-2+n$ edges $\cd\subset E(\Gamma)$ such that
\begin{equation}   \label{cohdim}
H^1_{dR}(\Sigma,T_\Sigma^\frac12)\cong(\br^2)^\cd.
\end{equation}
The collection $\cd$ will be a dimer with the property that each vertex of $\Gamma$ is incident to a unique edge in $\cd$.  More generally $\cd$ can be related to a dimer via Whitehead moves on the graph, but we will only consider  ideal triangulations with dual ribbon graphs admitting dimers.  The isomorphism \eqref{cohdim} is proven in \cite{NorEnu} in the case that all boundary components are of Neveu--Schwarz type.  
We recall the argument here, in order to modify it in the presence of Ramond punctures.  The representation $\varrho:\pi_1(\Sigma)\to SL(2,\br)$ up to conjugation is equivalent to a graph connection up to gauge transformation.  It associates an element $g_e$ to each oriented edge of $\Gamma$ satisfying $g_{\bar{e}}=g_e^{-1}$.   The vector space $H^1_{dR}(\Sigma,T_\Sigma^\frac12)$ is isomorphic to the dual of the cokernel of $C_2(\Sigma,T_\Sigma^{1/2})\stackrel{\partial}{\longrightarrow}C_1(\Sigma,T_\Sigma^{1/2})$ from \eqref{twicom} since $C^0=0$.  The dual of the cokernel is isomorphic to the kernel of the dual map 
\[\delta:(\br^2)^X\to(\br^2)^V
\] 
where $X$ is the set of oriented edges of $\Gamma$,
defined on $e\in X$ by
\[\delta v_e|_{e_+}=g_ev_e,\quad\delta v_e|_{e_-}=-v_e.\]
We choose the convention that the trivialisation of $T_\Sigma^\frac12$ over an oriented edge $e$ is induced from the trivialisation of $T_\Sigma^\frac12$ over its source vertex $e_-$.  
The condition $\delta v=0$ at each vertex is the vanishing of the sum of contributions from the three oriented edges adjacent to the given vertex, such as $\sum g_ev_e=0$ for a vertex with only incoming edges, or more generally each summand is $g_ev_e$ or $-v_e$.

Fix a dimer edge $e_0\in \cd$, choose an arbitrary non-zero vector 
\[ v_{e_0}\in\mathbb R^2, \] 
and set 
\[ v_e=0, \qquad e\in \cd\setminus\{e_0\}. \] 
Since $\Gamma$ is trivalent, the complement $\Gamma\setminus \cd$ is a disjoint union of embedded loops. Along an oriented loop 
\[ \gamma\subset\Gamma\setminus \cd, \] 
the vertex condition defining $\ker\delta$ uniquely determines the vector on each successive edge from the preceding one. For example, if the orientation of every edge agrees with that of $\gamma$, then consecutive edges satisfy \[ g_{e_i}v_{e_i}=v_{e_{i+1}}, \] where $e_i$ and $e_{i+1}$ are consecutive oriented edges of $\gamma$. Suppose first that $\gamma$ does not meet $e_0$. Then every vector on $\gamma$ is determined by a single vector $v_e$ on one edge, which must satisfy \[ g_\gamma v_e=v_e. \] 
Equivalently, $(g_\gamma-I)v_e=0$.\\

\noindent {\em 2. Holonomy with no non-trivial fixed points.}  First assume that $g_\gamma-I$ is invertible, which is the case treated in \cite{NorEnu}.   Invertibility of $g_\gamma-I$ holds when $g_\gamma$ is hyperbolic---always true on non-boundary $\gamma$---or when $g_\gamma$ is parabolic and Neveu--Schwarz, since $\tr(g_\gamma)<0$ and $\det(g_\gamma)=1$ $\Rightarrow$ 1 is not an eigenvalue of $g_\gamma$.  Invertibility of $g_\gamma-I$ implies that the only solution is $v_e=0$.  

Now denote components of $\Gamma\setminus \cd$ that meet the distinguished edge $e_0$ by $\gamma_1$ and $\gamma_2$ with labels $(g_i,v_i)\in(S L(2,\br),\br^2)$ for $i=1,2$ as in the diagrams below.  The diagrams show the three possibilities.  A dimer edge $e_0$ cannot be a loop so via a gauge transformation we may arrange $g_{e_0}=I$. 
\begin{figure}[H]
\label{compdumb}
\centering
\includegraphics[scale=0.3]{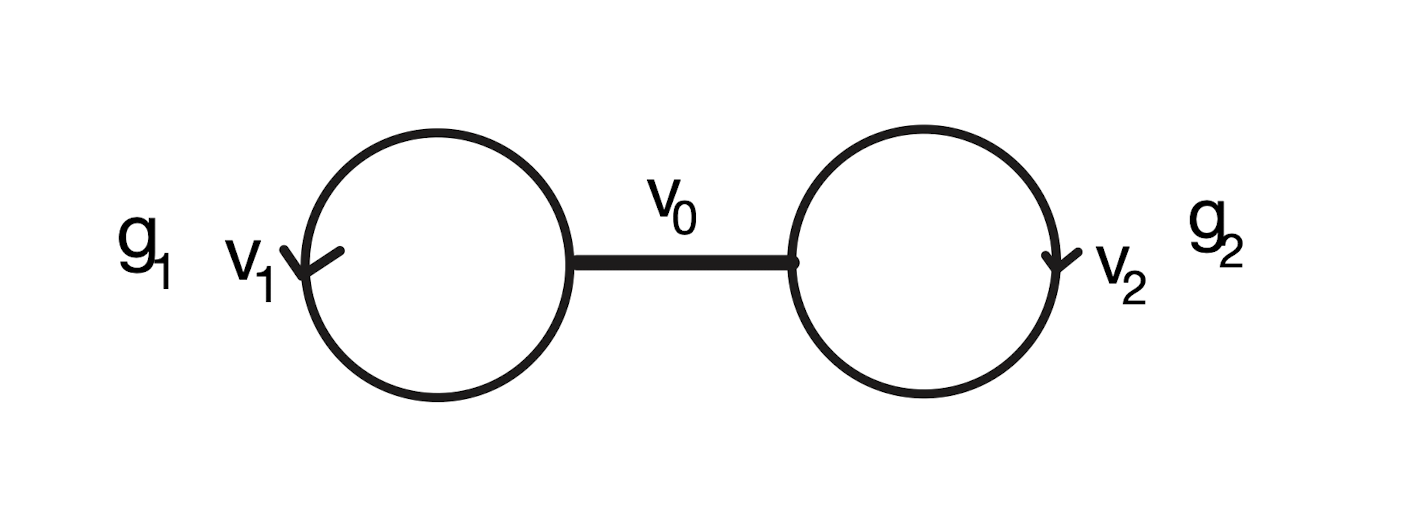}
\caption{$(\Gamma\setminus\cd)\cup e_0$}
\end{figure}
Traversing loops yields the relations at the two vertices
 \[ (g_1-I)v_1-v_0=0,\quad (g_2-I)v_2+v_0=0 \] 
hence
\[v_1=(g_1-I)^{-1}v_0,\quad v_2=-(g_2-I)^{-1}v_0.
\]
Hence invertiblity of $g_\gamma-I$ along any loop implies that $v_0$ uniquely determines $v_1$ and $v_2$, which in turn uniquely determines the vectors on every edge of $\gamma_1$ and $\gamma_2$.  All other edges are assigned the zero vector by the argument above.  

\begin{figure}[H]
\label{comptheta}
\centering
\includegraphics[scale=0.3]{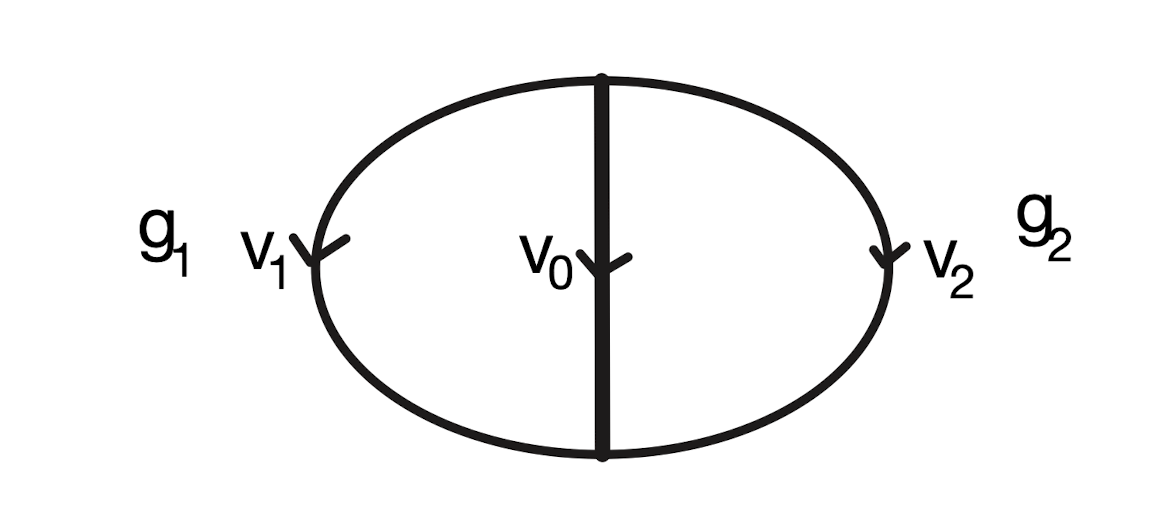}\includegraphics[scale=0.3]{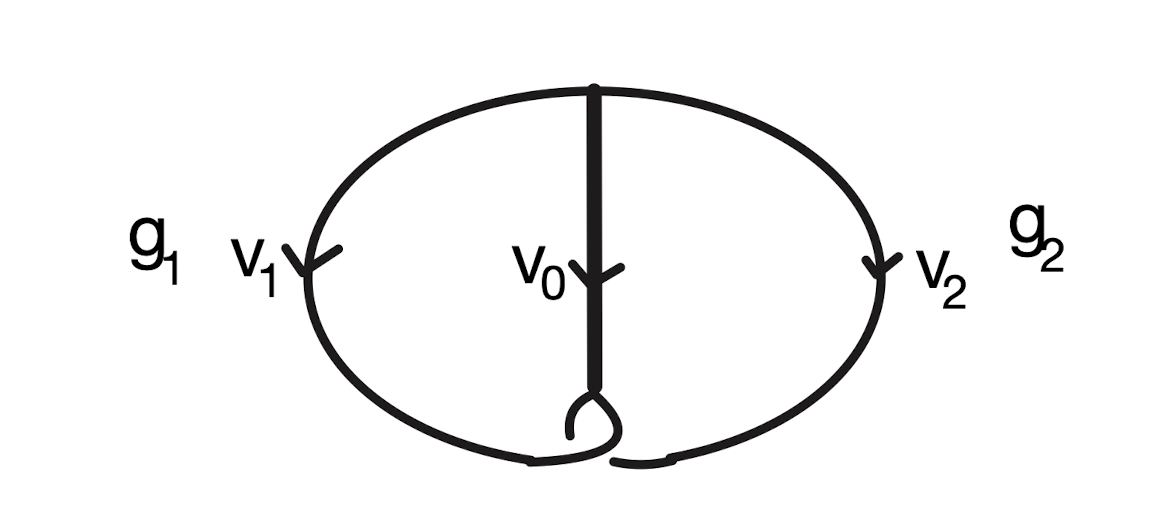}
\caption{$(\Gamma\setminus\cd)\cup e_0$}
\end{figure}
The two vertices gives the relations
\begin{equation} \label{thetarel}
v_0+v_1+v_2=0,\quad v_0+g_1v_1+g_2v_2=0
\end{equation} 
hence
\[v_1=-g_1^{-1}(g_2g_1^{-1}-I)^{-1}(g_2-I)v_0,\quad v_2=g_1^{-1}(g_2g_1^{-1}-I)^{-1}(g_1-I)v_0.
\]
Again $v_0$ uniquely determines $v_1$ and $v_2$, using invertibility of $g_2g_1^{-1}-I$ where $g_2g_1^{-1}=g_\gamma$ for a loop $\gamma$.

Thus each choice of $ v_{e_0}\in\mathbb R^2 $ determines a unique element of $\ker\delta$. The elements arising from distinct dimer edges are linearly independent, since each vanishes on every other dimer edge. Conversely, if an element of $\ker\delta$ vanishes on every dimer edge, then the above propagation argument shows that it vanishes identically. Consequently, each dimer edge contributes a two-dimensional subspace of $H_1(\Gamma,T_\Sigma^{1/2})$, and since $D$ consists of $2g-2+n$ edges, these subspaces together form a basis of $2(2g-2+n)$ vectors for $H_1(\Gamma,T_\Sigma^{1/2})$, proving \eqref{cohdim} under the assumption that $g_\gamma-I$ is invertible along all loops $\gamma$.\\

\noindent {\em 3. Holonomy with non-trivial fixed points.} 
Along a Ramond boundary component, the holonomy $g$ satisfies $\tr(g)>0$ and, since the hyperbolic metric is complete, it satisfies $\tr(g)=2$.  So 
\[g\sim\left(\begin{array}{cc}1&a\\0&1\end{array}\right),\qquad a\neq0\] 
and $g-I$ is not invertible with rank equal to one.  In this case, the argument above needs to be modified.

When $g>0$, one can still choose a ribbon graph $\Gamma$ with collection $\cd$ so that each boundary component contains at least two edges from $\cd$. This guarantees that $g_\gamma-I$ is invertible along the collection of loops $\gamma$ in $\Gamma\setminus\cd$ which implies \eqref{cohdim} by the argument above.   When $g=1$, a trivalent ribbon graph is given in the diagram.  The diagram shows a planar model of a genus one surface where arrows and labels indicate identification of sides.  The ribbon graph has $3n$ edges, $2n$ vertices and $n$ boundary components.  The dimer is given by the set of vertical edges, thickened in the diagram.
\begin{center}
\begin{tikzpicture}[scale=1.2,line cap=round,line join=round,>=stealth]
\centering
\def\r{1}

\usetikzlibrary{shapes.geometric}

\tikzset{
  hex/.style={
    draw,
    thick,
    regular polygon,
    regular polygon sides=6,
    minimum size={2*\r cm},
    rotate=30,
    inner sep=0pt
  },
  arr/.style={->,>=stealth,thick,shorten >=2pt,shorten <=2pt}
}

\node[hex] (H1) at (0,0) {};
\node[hex] (H2) at ({1.5*\r},0) {};
\node[hex] (H3) at ({3*\r},0) {};

\draw[line width=1mm] (-.75*\r,-.4*\r) -- (-.75*\r,.4*\r);
\draw[line width=1mm] (.75*\r,-.4*\r) -- (.75*\r,.4*\r);
\draw[line width=1mm] (2.25*\r,-.4*\r) -- (2.25*\r,.4*\r);
\draw[line width=1mm] (3.75*\r,-.4*\r) -- (3.75*\r,.4*\r);
\draw[line width=1mm] (5.6*\r,-.4*\r) -- (5.6*\r,.4*\r);
\draw[line width=1mm] (7*\r,-.4*\r) -- (7*\r,.4*\r);

\draw[line width=.8mm, ->]  (-.75*\r,-.1*\r) -- (-.75*\r,.1*\r);
\draw[line width=.8mm, ->]  (7*\r,-.1*\r) -- (7*\r,.1*\r);

\fill ({4.3*\r},0) circle (1pt);
\fill ({4.65*\r},0) circle (1pt);
\fill ({5.0*\r},0) circle (1pt);

\node[hex] (H4) at ({6.3*\r},0) {};

\foreach \x/\lone/\ltwo in {
  0/{a_1}/{a_2},
  {1.5*\r}/{a_3}/{a_4},
  {3*\r}/{a_5}/{a_6},
  {6.3*\r}/{a_{2n-1}}/{a_{2n}}
}{
  \begin{scope}[shift={(\x,0)}]
    \draw[arr] (-0.72*\r,0.42*\r)--(-0.32*\r,0.65*\r);
    \node at (-0.55*\r,0.95*\r) {$\lone$};

    \draw[arr] (0.32*\r,0.65*\r)--(0.72*\r,0.42*\r);
    \node at (0.55*\r,0.95*\r) {$\ltwo$};
  \end{scope}
}

\foreach \x/\lone/\ltwo in {
  0/{a_{2n}}/{a_1},
  {1.5*\r}/{a_2}/{a_3},
  {3*\r}/{a_4}/{a_5},
  {6.3*\r}/{a_{2n-2}}/{a_{2n-1}}
}{
  \begin{scope}[shift={(\x,0)}]
    \draw[arr] (-0.72*\r,-0.42*\r)--(-0.32*\r,-0.65*\r);
    \node at (-0.55*\r,-0.95*\r) {$\lone$};

    \draw[arr] (0.32*\r,-0.65*\r)--(0.72*\r,-0.42*\r);
    \node at (0.55*\r,-0.95*\r) {$\ltwo$};
  \end{scope}
}

\draw[arr] (-\r+.28, -0.1*\r)--(-\r+.29,0.1*\r); 
\node[left] at (-\r,0) {$a_0$};

\node[right] at ({6.1*\r+\r},0) {$a_0$};

\end{tikzpicture}
\end{center}
Each face contains two dimer edges, so if we set $v_e=0$ along all dimer edges $e$ except for $e_0$, then any loop $\gamma$ in $(\Gamma\setminus\cd)\cup e_0$ cannot be a boundary loop, hence $g_\gamma-I$ is invertible and the argument above applies.

For higher genus $g>1$, one can construct a ribbon graph $\Gamma$ with collection $\cd$ so that each boundary component contains at least two edges from $\cd$ as follows.  Begin with the genus 1 example above, then take two different faces and attach a new edge dimer to the centre of two non-dimer edges as in the diagram below. The new edge is pictured in two parts, where the two end points marked by $\bullet$ are identified and do not represent a vertex.  The new picture sends $(V,E,n)\mapsto (V+2,E+3,n-1)$ and $g\mapsto g+1$.  Each boundary component still contains at least two dimer edges, since two boundary components are combined into one and the others are unaffected. Apply this repeatedly to increase the genus, beginning with a large enough $n$.
\begin{center}
\begin{tikzpicture}[scale=1.2,line cap=round,line join=round,>=stealth]
\centering
\def\r{1}

\usetikzlibrary{shapes.geometric}

\tikzset{
  hex/.style={
    draw,
    thick,
    regular polygon,
    regular polygon sides=6,
    minimum size={2*\r cm},
    rotate=30,
    inner sep=0pt
  },
  arr/.style={->,>=stealth,thick,shorten >=2pt,shorten <=2pt}
}

\node[hex] (H1) at (0,0) {};
\node[hex] (H2) at ({1.5*\r},0) {};
\node[hex] (H3) at ({3*\r},0) {};

\draw[line width=1mm] (-.75*\r,-.4*\r) -- (-.75*\r,.4*\r);
\draw[line width=1mm] (.75*\r,-.4*\r) -- (.75*\r,.4*\r);
\draw[line width=1mm] (2.25*\r,-.4*\r) -- (2.25*\r,.4*\r);
\draw[line width=1mm] (3.75*\r,-.4*\r) -- (3.75*\r,.4*\r);
\draw[line width=1mm] (5.6*\r,-.4*\r) -- (5.6*\r,.4*\r);
\draw[line width=1mm] (7*\r,-.4*\r) -- (7*\r,.4*\r);

\draw[line width=.8mm, ->]  (-.75*\r,-.1*\r) -- (-.75*\r,.1*\r);
\draw[line width=.8mm, ->]  (7*\r,-.1*\r) -- (7*\r,.1*\r);

\draw[line width=.8mm]  (0,0) -- (-.3,.65);
\draw[line width=.8mm]  (1.5,0) -- (1.2,-.62);
\fill (0,0) circle (2pt);
\fill (1.5,0) circle (2pt);

\fill ({4.3*\r},0) circle (1pt);
\fill ({4.65*\r},0) circle (1pt);
\fill ({5.0*\r},0) circle (1pt);

\node[hex] (H4) at ({6.3*\r},0) {};

\foreach \x/\lone/\ltwo in {
  0/{a_1}/{a_2},
  {1.5*\r}/{a_3}/{a_4},
  {3*\r}/{a_5}/{a_6},
  {6.3*\r}/{a_{2n-1}}/{a_{2n}}
}{
  \begin{scope}[shift={(\x,0)}]
    \draw[arr] (-0.72*\r,0.42*\r)--(-0.32*\r,0.65*\r);
    \node at (-0.55*\r,0.95*\r) {$\lone$};

    \draw[arr] (0.32*\r,0.65*\r)--(0.72*\r,0.42*\r);
    \node at (0.55*\r,0.95*\r) {$\ltwo$};
  \end{scope}
}

\foreach \x/\lone/\ltwo in {
  0/{a_{2n}}/{a_1},
  {1.5*\r}/{a_2}/{a_3},
  {3*\r}/{a_4}/{a_5},
  {6.3*\r}/{a_{2n-2}}/{a_{2n-1}}
}{
  \begin{scope}[shift={(\x,0)}]
    \draw[arr] (-0.72*\r,-0.42*\r)--(-0.32*\r,-0.65*\r);
    \node at (-0.55*\r,-0.95*\r) {$\lone$};

    \draw[arr] (0.32*\r,-0.65*\r)--(0.72*\r,-0.42*\r);
    \node at (0.55*\r,-0.95*\r) {$\ltwo$};
  \end{scope}
}

\draw[arr] (-\r+.28, -0.1*\r)--(-\r+.29,0.1*\r); 
\node[left] at (-\r,0) {$a_0$};

\node[right] at ({6.1*\r+\r},0) {$a_0$};

\end{tikzpicture}
\end{center}

In genus 0 there does not exist such a pair $(\Gamma,\cd)$ so that each Ramond boundary component contains at least two edges from $\cd$.  This is simply due to the fact that for $g=0$, $2|D|=2(2g-2+n)<2n$.  For $n>3$, it can be arranged that each boundary component contains at least dimer edge.  Simply begin with the $n=4$ picture below.  Each boundary component contains one dimer edge and two non-dimer edges. 
\begin{center}
\begin{tikzpicture}[scale=.8,line cap=round,line join=round]
\tikzset{
  edge/.style={line width=0.8pt},
  dimer/.style={line width=2.6pt},
  vertex/.style={circle,fill=black,inner sep=1.8pt}
}

\coordinate (T) at (90:1.5);
\coordinate (L) at (210:1.5);
\coordinate (R) at (330:1.5);

\coordinate (V1) at (0,0);

\draw[dimer] (T) arc[start angle=90,end angle=210,radius=1.5];
\draw[edge]  (L) arc[start angle=210,end angle=330,radius=1.5];
\draw[edge]  (R) arc[start angle=330,end angle=450,radius=1.5];

\draw[edge]  (V1)--(T);
\draw[edge]  (V1)--(L);
\draw[dimer] (V1)--(R);

\end{tikzpicture}
\end{center}
For $n>4$, add a dimer edge inside a face from the centres of two non-dimer as in the diagram below.  This sends $(V,E,n)\mapsto (V+2,E+3,n+1)$  and each boundary component of the new planar ribbon graph contains one dimer edge and at least two non-dimer edges. So the process can continue to achieve any number $n$ of boundary components.  
\begin{center}
\begin{tikzpicture}[scale=.8,line cap=round,line join=round]
\tikzset{
  edge/.style={line width=0.8pt},
  dimer/.style={line width=2.6pt},
  vertex/.style={circle,fill=black,inner sep=1.8pt}
}

\coordinate (T) at (90:1.5);
\coordinate (L) at (210:1.5);
\coordinate (R) at (330:1.5);

\coordinate (U) at (90:.8);
\coordinate (V1) at (0,0);

\draw[dimer] (T) arc[start angle=90,end angle=210,radius=1.5];
\draw[edge]  (L) arc[start angle=210,end angle=330,radius=1.5];
\draw[edge]  (R) arc[start angle=330,end angle=450,radius=1.5];

\draw[dimer] (U) arc[start angle=90,end angle=210,radius=.8];
\draw[edge]  (V1)--(T);
\draw[edge]  (V1)--(L);
\draw[dimer] (V1)--(R);

\end{tikzpicture}
\end{center}
Now assume we have a genus 0 ribbon graph $\Gamma$ and a dimer $\cd$ such that each boundary component contains at least one dimer edge.  Then around any loop $\gamma\subset\Gamma\setminus\cd$ we have $g_\gamma-I$ is invertible since $\gamma$ is not a Ramond boundary.  Following the argument above, we take $e_0\in\cd$ and we an conclude that if $\gamma$ does not meet $e_0$ then $v_e=0$ along any edge of $\gamma$.  If $\gamma$ meets $e_0$, Figure~\ref{compdumb} and Figure~\ref{comptheta} show $(\Gamma\setminus\cd)\cup e_0$.  

In the case of Figure~\ref{compdumb}, neither of the two loops is a Ramond boundary, since any boundary loop contains a dimer edge, which would be absent from the figure.  Hence $g_1-I$ and $g_2-I$ are invertible and the earlier argument applies to conclude that any $v_0\in\br^2$ uniquely determines $v_1$ and $v_2$ thus a solution of $\delta v=0$.  

For the graphs in Figure~\ref{comptheta}, the loop $\gamma_{12}$ consisting of the edges $e_1$ and $e_2$ is not a boundary loop since it contains no dimer edges.  Hence $g_2g_1^{-1}-I$ is invertible and we can draw the same conclusions as the earlier argument, i.e. that $v_0\in\br^2$ uniquely determines $v_1$ and $v_2$.

Thus again we conclude that $D$ determines a vector space $(\br^2)^\cd$ of solutions of $\delta v=0$.  Furthermore, any solution is constructed this way since the dimer is chosen so that any solutions of $\delta v=0$ necessarily vanishes on all dimer edges.\\

\noindent {\em 4. Calculation of $H^1(\cc,L^\vee)$.}
The dimension
\begin{equation}  \label{dimh1}
h^1(\cc,L^\vee)=\dim H^1(\cc,L^\vee)=2g-2+\frac{1}{2}(n+|\sigma|)
\end{equation}
may be computed via the orbifold Riemann-Roch theorem which incorporates the isotropy representation at each orbifold point. It gives
\begin{align*}
h^0(\cc,\cl^\vee)-h^1(\cc,\cl^\vee)
&=
1-g+\deg \cl^\vee-\sum_{i=1}^n\lambda_{p_i}\\
&=
1-g+\left(1-g-\frac{n}{2}\right)-\frac{|\sigma|}{2} 
=
2-2g-\frac{1}{2}(n+|\sigma|),
\end{align*}
where $|\sigma|=\sigma_1+\cdots+\sigma_n$,
and $\lambda_{p_i}=\sigma_i/2$ is the contribution of the isotropy representation at $p_i$.  It is also useful to see this calculation on the coarse curve $C$ which gives 
\[h^0(C,L^\vee(-NS))-h^1(C,L^\vee(-NS))=2-2g-\tfrac12(n+|\sigma|)\] 
for $L=\rho_*\cl$ since $\rho_*(\cl^\vee)=(\rho_*\cl)^\vee(-NS)$.

We see that the dimensions of $H^1_{dR}(\Sigma,T_\Sigma^\frac12)$ and $H^1(\cc, \cl^\vee)^\vee$ disagree when $R\neq\emptyset$ and it remains to prove there relation via the exact sequence \eqref{ramex}.\\

\noindent{\em 5. Construction of Ramond boundary solutions.} For each Ramond boundary loop $\gamma\subset\Gamma$ with parabolic holonomy, there exists a non-trivial solution of $\delta v=0$, unique up to scale, supported on $\gamma$.  This defines a one-dimensional space of solutions to $\delta v=0$.  The solution is constructed as follows.  The holonomy $g_\gamma$ around $\gamma$ is parabolic with positive trace hence $g_\gamma-I$ is of rank one and $(g_\gamma-I)v=0$ defines a non-zero vector $v\in\br^2$ unique up to scale.     The edges of $\gamma$ are assigned $g_i\in SL(2,\br)$, $i=1,\ldots,m$, so that $g_m\cdots g_2g_1=g_\gamma$.

Suppose first that each edge of $\gamma$ meets a different boundary loop on the other side, or equivalently $\gamma$ meets each edge of $\Gamma$ at most once.  Assign $v_i\in\br^2$ to each edge of $\gamma$ where $v_1=v=gv$ as in the diagram.  Assign $0$ to all other edges.  
\begin{figure}[H]  \label{hexagon}
\begin{tikzpicture}[scale=.8,line cap=round,line join=round,>=stealth]
\usetikzlibrary{calc}

\tikzset{
  edge/.style={thick},
  dashededge/.style={thick,dashed},
  arr/.style={->,thick,>=stealth,shorten >=2pt,shorten <=2pt},
  vertex/.style={circle,fill=black,inner sep=2pt}
}

\coordinate (R)  at (0:1.6);
\coordinate (UR) at (60:1.6);
\coordinate (UL) at (120:1.6);
\coordinate (L)  at (180:1.6);
\coordinate (DL) at (240:1.6);
\coordinate (DR) at (300:1.6);

\draw[edge] (R)--(UR);
\draw[edge] (UR)--(UL);
\draw[edge] (UL)--(L);
\draw[dashededge] (L)--(DL);
\draw[edge] (DR)--(DL);
\draw[edge] (DR)--(R);

\draw[edge] (R)--++(0:0.8);
\draw[edge] (UR)--++(60:0.8);
\draw[edge] (UL)--++(120:0.8);
\draw[edge] (L)--++(180:0.8);
\draw[edge] (DL)--++(240:0.8);
\draw[edge] (DR)--++(300:0.8);

\draw[arr] ($(UL)!0.35!(UR)$)--($(UL)!0.65!(UR)$);
\draw[arr] ($(UR)!0.35!(R)$)--($(UR)!0.65!(R)$);
\draw[arr] ($(R)!0.35!(DR)$)--($(R)!0.65!(DR)$);
\draw[arr] ($(DR)!0.35!(DL)$)--($(DR)!0.65!(DL)$);
\draw[arr] ($(L)!0.35!(UL)$)--($(L)!0.65!(UL)$);

\foreach \P in {R,UR,UL,L,DL,DR}
  \fill (\P) circle (2pt);

\node[above] at ($(UL)!0.5!(UR)$) {$v_1$};
\node[left] at ($(L)!0.5!(UL)$) {$v_m$};
\node[right] at ($(UR)!0.5!(R)$) {$v_2$};
\node[right] at ($(R)!0.5!(DR)$) {$v_3$};

\node[right]      at ($(R)+(0:0.8)$)    {$0$};
\node[above right]at ($(UR)+(60:0.8)$)  {$0$};
\node[above left] at ($(UL)+(120:0.8)$) {$0$};
\node[left]       at ($(L)+(180:0.8)$)  {$0$};
\node[below left] at ($(DL)+(240:0.8)$) {$0$};
\node[below right]at ($(DR)+(300:0.8)$) {$0$};

\end{tikzpicture}
\caption{$v_1=v$, $v_2=g_1v_1$,\quad $v_3=g_2v_2$,\quad...,\quad $v_1=g_mv_m=g_\gamma v_1$}
\end{figure}
\noindent This gives a solution to $\delta v=0$ supported on $\gamma$ since at each vertex 
\[-g_kv_k+v_{k+1}+0=0\]
and $0+0+0=0$ at all vertices outside of $\gamma$, as required.

Now suppose instead the Ramond boundary loop $\gamma$ meets some edges of $\Gamma$ twice.  In this case, the solution supported on $\gamma$  is somewhat more involved.  Let $\{e_1,...,e_m\}\subset\gamma$ be those edges of $\Gamma$ that $\gamma$ meets exactly once, and refer to them as lone edges.  We refer to those edges that meet $\Gamma$ twice as paired edges.   The set of lone edges is necessarily non-empty, i.e. $m\geq1$, because evenness of $|R|$ implies that $\Gamma$ has at least two boundary components, and connectedness of $\Gamma$ implies that any boundary component must share an edge with at least one other boundary component.   

Figure~\ref{octagon} shows a paired edge, assigned $v_0$.  The holonomy around $\gamma$ is
\[g_\gamma=g_m...g_0^{-1}...g_2g_1g_0\] 
which is the word obtained by inserting for each paired edge $e$, holonomy $g_e$ and $g_e^{-1}$ inside the word $g_mg_{m-1}...g_2g_1$.     A paired edge $e$ is not a closed loop, since a boundary component must lie on one side of any closed loop, and more generally the union of the paired edges cannot contain a closed loop, again since a boundary component must lie on one side of any closed loop.  Hence, via a gauge transformation, we may arrange that $g_e=I$ for all paired edges $e$.  Thus $g_\gamma=g_m...g_2g_1$ gets contributions only from lone edges.  Let $v_1=v\neq 0$ ,where $g_\gamma v=v$  and define $v_{k+1}=g_kv_k$ for $k=1,...m-1$ so that $g_mv_m=v_1$.  Assign the vectors $v_1,...,v_m$ to the lone edges $\{e_1,...,e_m\}$ (oriented so that the boundary lies to the right).
\begin{figure}[H]   \label{octagon}
\begin{tikzpicture}[scale=1.3,line cap=round,line join=round,>=stealth]
\usetikzlibrary{calc}

\tikzset{
  edge/.style={thick},
  halfedge/.style={thick},
  arr/.style={->,thick,shorten >=2pt,shorten <=2pt},
  vertex/.style={circle,fill=black,inner sep=2pt}
}

\coordinate (A) at (-1,1);
\coordinate (B) at (1,1);
\coordinate (C) at (1.7,0.3);
\coordinate (D) at (1.7,-0.7);
\coordinate (E) at (1,-1.4);
\coordinate (F) at (-1,-1.4);
\coordinate (G) at (-1.7,-0.7);
\coordinate (H) at (-1.7,0.3);

\draw[edge]
(A)--node[midway,above]{$v_0$}
(B)--node[midway,right]{$v_1$}(C);
\draw[dashed] (C)--(D);
\draw[edge] (D)--node[pos=0.7,right]{$\ v_{k-1}$}(E)--node[midway,below]{$v_0$}(F)--node[pos=0.4,left]{$v_k$}(G);
\draw[dashed] (G)--(H);
\draw[edge] (H)--node[pos=0.7,left]{$v_m\ $}(A);

\draw[arr] ($(A)!0.38!(B)$)--($(A)!0.62!(B)$);
\draw[arr] ($(B)!0.38!(C)$)--($(B)!0.62!(C)$);
\draw[arr] ($(D)!0.38!(E)$)--($(D)!0.62!(E)$);
\draw[arr] ($(E)!0.62!(F)$)--($(E)!0.38!(F)$);
\draw[arr] ($(F)!0.38!(G)$)--($(F)!0.62!(G)$);
\draw[arr] ($(H)!0.38!(A)$)--($(H)!0.62!(A)$);

\draw[halfedge] (A)--++(135:0.4);
\draw[halfedge] (B)--++(45:0.4);
\draw[halfedge] (C)--++(0:0.4);
\draw[halfedge] (D)--++(-45:0.4);
\draw[halfedge] (E)--++(-45:0.4);
\draw[halfedge] (F)--++(-135:0.4);
\draw[halfedge] (G)--++(180:0.4);
\draw[halfedge] (H)--++(180:0.4);

\node[above left]  at ($(A)+(135:0.4)$) {$v_k$};
\node[above right] at ($(B)+(45:0.4)$) {$v_{k-1}$};
\node[right]       at ($(C)+(0:0.4)$) {$0$};
\node[right]       at ($(D)+(-45:0.4)$) {$0$};
\node[below right] at ($(E)+(-45:0.4)$) {$v_1$};
\node[below left]  at ($(F)+(-135:0.4)$) {$v_m$};
\node[left]        at ($(G)+(180:0.4)$) {$0$};
\node[left]  at ($(H)+(180:0.4)$) {$0$};

\foreach \P in {A,B,C,D,E,F,G,H}
  \fill (\P) circle (2pt);

\end{tikzpicture}
\caption{$v_0=g_mv_m-v_k$,\quad $v_1=v=g_\gamma v$,\quad $v_2=g_1v_1$,\quad...}
\end{figure}
At a paired edge, assign a vector that satisfies the vertex condition at its source.  In Figure~\ref{octagon}, this means
\[ v_0=g_mv_m-v_k.
\]
Then the vertex condition at the target of the paired edge is automatically satisfied.  Again using  Figure~\ref{octagon} to demonstrate, and recalling that $g_0=I$, we get:
\[ v_0=g_mv_m-v_k=v_1-g_{k-1}v_{k-1}
\]
as required.  The same applies to all paired edges.\\

\noindent{\em 6. Linear independence of Ramond boundary solutions.} The construction above defines $|R|$ solutions supported at each Ramond boundary component which span a subspace.  In fact this gives an $|R|$-dimensional subspace of solutions because the solutions supported at each Ramond boundary component are linear independent which is proven as follows.  Suppose there is a non-trivial linear relation between solutions, each supported on a single Ramond boundary component.  If a Ramond boundary component contains a paired edge, then it doesn't appear in the linear relation since its coefficient is necessarily zero to remove its paired edge contribution.  Similarly, if a lone edge on a Ramond boundary component meets a Neveu--Schwarz boundary component, then it does not appear in the linear relation, since its coefficient is necessarily zero to remove its lone-edge contribution.   The remaining case is an edge $e$ that meets two different Ramond boundary components $\gamma_1$ and $\gamma_2$.  If there is a linear relation, then the $e$ component can be canceled only if the two solutions supported on each of $\gamma_1$ and $\gamma_2$ define parallel vectors on $e$.  But then there exists $v\neq0$ such that $\gamma_1v=v=\gamma_2v$.  The fixed vector corresponds to a fixed point in $\partial\mathbb H$.  Two parabolic elements have the same fixed
point in $\partial\mathbb H$ if and only if they lie in the same
maximal parabolic subgroup. Therefore parabolic elements associated to
distinct cusps have distinct fixed points, contradicting the existence of $v$.  This proves the linear independence of the solutions supported at each Ramond boundary component hence they define an $|R|$-dimensional set of solutions.

Thus $\dim H^1_{dR}(\Sigma,T_\Sigma^\frac12)-|R|=\dim H^1(\cc, \cl^\vee)^\vee$ which gives an exact sequence
\[0\to\br^R\to H^1_{dR}(\Sigma,T_\Sigma^\frac12)\to H^1(\cc, \cl^\vee)^\vee\to0\]
as required.
\end{proof}

\subsection{Vector bundle $E_{g,n}\to{\cal M}_{g,n}^{\text{spin}}$.}   \label{vecE}

The sheaf cohomology groups calculated in \ref{sheafcoh} associate a vector space to each spin curve  which together define a vector bundle over the moduli space of spin curves.  The vector bundle arises as a derived pushforward of a sheaf.

Denote the universal spin curve by $\cu_{g,\sigma}^{\text{spin}}\stackrel{\pi}{\longrightarrow}{\cal M}_{g,\sigma}^{\text{spin}}$ for $\sigma=(\sigma_1,...,\sigma_n)$.  Let $\ce$ be the universal spin bundle defined over $\cu_{g,\sigma}^{\text{spin}}$ so that $\ce^{\otimes 2}\cong\omega_\pi^{\log}$.  

 \begin{definition}  \label{obsbun}
Define a vector bundle by $E_{g,n}^\vee:=-R\pi_*\ce^\vee\hspace{-1mm}\to{\cal M}_{g,\sigma}^{\text{spin}}$. 
\end{definition} 
This defines a holomorphic vector bundle with fibre 
\[
E_{g,n}|_{[\cc]}=H^1(\cc,\cl^{\vee})^\vee
\]
for $\cl^{\otimes 2}\cong \omega_\cc$.  It is a vector bundle since $\deg \cl^{\vee}=1-g-\frac12n<0$ so $H^0(\cc,\cl^{\vee})=0$ and $H^1(\cc,\cl^{\vee})$ has constant dimension over connected components of ${\cal M}_{g,n}^{\text{spin}}$.   The dual $\ce^\vee$ in the definition is important, since without it we would instead produce a virual bundle built from both $H^0(\cc,L)$ and $H^1(\cc,L)$ since both are in general non-trivial.  The dual present in $E_{g,n}^\vee$ is unimportant, since it only introduces a factor of $(-1)^n$ into later formulae, but it turns out to be more natural.

Denote the restriction to each component by $E_{g,\sigma}=E_{g,n}|_{{\cal M}_{g,\sigma}^{\text{spin}}}$.  The  rank of $E_{g,\sigma}$ is obtained from the Riemann-Roch calculation in \ref{sheafcoh}:
\[
\text{rank\ }E_{g,\sigma}=2g-2+n-\tfrac12|R|
\]
where $\sigma=(1^{n-|R|},0^{|R|})$.

Now instead we give a construction of a vector bundle using the hyperbolic viewpoint which uses the derived pushforward of the universal spin structure given by a relative local system.  Define
\[
\mathcal{U}= \Sigma\times \ct_{g,n}^{\rm spin}.
\]
Each point in Teichm\"uller space $b\in\ct_{g,n}^{\rm spin}$ determines a Fuchsian representation up to conjugation
\[
\varrho_b:\pi_1(\Sigma)\longrightarrow SL(2,\br),
\]
and we choose conjugacy class representatives so that $\varrho_b$ varies continuously in $b$.  

The family of representations $\varrho_b$ defines a universal spin structure which is a rank two real vector bundle
\[
\be=\big(\bh\times\br^2\times\ct_{g,n}^{\rm spin}\big)\big/\pi_1(\Sigma)
\longrightarrow
\mathcal{U},
\]
where the action of $\gamma\in\pi_1(\Sigma)$ is given by
\[
\gamma\cdot(x,v,b)
=
\big(\gamma x,\varrho_b(\gamma)v,b\big).
\]
The restriction of $\be$ to the fibre $\mathcal{U}_b=\Sigma\times\{b\}$
is precisely the flat bundle associated to the representation
$\varrho_b:\pi_1(\Sigma)\longrightarrow SL(2,\br)$,
\[ \be|_{\mathcal{U}_b}\cong T_{\mathcal{U}_b}^{\frac12}
\] 
Define the relative local system $\ce^F$ to be the sheaf of sections of $\be$ locally constant along fibres $\mathcal{U}_b$.  
 \begin{definition}  \label{obsbunflat}
Define the vector bundle $E^F_{g,n}:=-R\pi_*\ce^F\hspace{-1mm}\to{\cal M}_{g,n}^{\text{spin}}$. 
\end{definition} 
This defines a vector bundle with fibre 
\[
E^F_{g,n}|_{[\Sigma]}= H^1_{dR}(\Sigma,T_\Sigma^\frac12).
\]
The restriction to each component is denoted by $E^F_{g,\sigma}=E^F_{g,n}|_{{\cal M}_{g,\sigma}^{\text{spin}}}$.  The calculation in the proof of Theorem~\ref{exactseq} gives
\[
\text{rank\ }E^F_{g,n}=4g-4+2n.
\]
The rank of $E^F_{g,n}$ is the same on each component of ${\cal M}_{g,n}^{\text{spin}}$ unlike the rank of $E_{g,n}$.

Over the Neveu--Schwarz components ${\cal M}_{g,\sigma}^{\text{spin}}$ for $\sigma=(1^n)$, i.e. all labeled points of the spin structure are of Neveu--Schwarz type, the isomorphism \eqref{canis} is canonical and hence the bundles are isomorphic
\[ E_{g,\sigma}\cong E^F_{g,\sigma},\qquad \sigma=(1^n).
\]
More generally, we expect an isomorphism of $E_{g,\sigma}$ with $E^F_{g,\sigma}/\co_\cu^{\oplus R}$, but the construction in Theorem~\ref{exactseq} has not yet been proven to be canonical.
\begin{exercise} {\bf Open problem.}
Prove
\[E_{g,\sigma}\cong E^F_{g,\sigma}/\co_\cu^{\oplus R}?\]
\end{exercise}

\section{Spin measures on ${\cal M}_{g,n}$.}   \label{spinmeas}
\subsection{Canonical Euler form}   \label{caneul}
The bundle $E_{g,n}\to{\cal M}_{g,n}^{\text{spin}}$ possesses a canonical Euler form $e(E_{g,n})$.  Its construction uses a canonical Hermitian metric on $E_{g,n}$, defined similarly to the construction of the Weil--Petersson metric.  We first define the Weil--Petersson metric and its K\"ahler form, the Weil--Petersson form.  This will help to make the similarities clear.

For $[C]\in{\cal M}_{g,n}$, the deformation theory of a curve and Serre duality give
\[ T_{[C]}{\cal M}_{g,n}\cong H^1(C,T_C(-D))\cong H^0(C,\omega_C^{\otimes 2}(D))^\vee.
\]
Instead, one can avoid the need for poles by using twisted curves $\cc$,
\[ T_{[\cc]}{\cal M}_{g,n}^{(2)}\cong H^1(\cc,T_\cc)\cong H^0(\cc,\omega_\cc^{\otimes 2})^\vee.
\]
\begin{exercise}
Prove that $H^0(\cc,\omega_\cc^{\otimes 2})\cong H^0(C,\omega_{ribbon graph}^{\otimes 2}(D))$.
\end{exercise}
The pairing of cotangent vectors is obtained by integration over $\Sigma=C-D$.  For
\[\eta_1,\eta_2\in H^0(C,\omega_{C}^{\otimes 2}(D))\cong H^1(C,T_C(-D))^\vee\]
define
\begin{equation}  \label{WPmetric} 
\langle\eta_1,\eta_2\rangle:=\int_\Sigma\frac{\overline{\eta_1}\eta_2}{h}
\end{equation}
where $h$ is the complete hyperbolic metric on $\Sigma$.
This defines a Hermitian metric on $T_{[C]}{\cal M}_{g,n}$ (or its dual).  Its real part is a K\"ahler metric, the Weil--Petersson metric, and its imaginary part is the 
Weil--Petersson symplectic form $\omega_{WP}$.   Its associated measure $\exp\omega_{WP}$ is the Weil--Petersson measure.

We can give a similar construction in the presence of a spin structure to produce a Hermitian metric on $E_{g,n}$.  Let $(\cc,D,\cl,\phi)$ be a smooth, genus $g$, pointed spin curve.  Its coarse curve $C=|\cc|$ is equipped with $L=\rho_*\cl\to C$ which satisfies $L^{\otimes 2}\cong \omega_C^{\text{log}}(-NS)=\omega_C(R)$, for $D=NS\sqcup R$.  Serre duality applied to the fibres of $E_{g,n}$ gives
\[H^1(C,L^\vee(-NS))^\vee \cong H^0(C,\omega_C^{3/2}(NS)).\] 
The spin structure, which gives a choice of $z'(w)^{1/2}$ on overlaps of a cover of $C$, ensures that $3/2$ differentials are well-defined.  The $3/2$ differentials give the analogue of holomorphic quadratic differentials used above to define the Weil--Petersson metric.  For
\[ \eta,\xi\in H^0(C,\omega_C^{3/2}(NS))
\]
define a Hermitian metric via integration over $\Sigma=C-D$ equipped with a complete hyperbolic metric $h$,
\begin{equation}  \label{hermetric} 
\langle\eta,\xi\rangle:=\int_\Sigma\frac{\overline{\eta}\xi}{\sqrt{h}}
\end{equation}
where $\sqrt{h}$ is the hyperbolic metric on the spin bundle $T^{\frac12}_\Sigma$.   The hyperbolic metric $h$ on $\Sigma$ induces the metric $\sqrt{h}$.  In local coordinates simply take square roots $h=H(z)|dz|^2\mapsto\sqrt{H(z)}|dz|=\sqrt{h}$ where $H(z)>0$ and $\sqrt{H(z)}>0$.    Alternatively, the metric $\sqrt{h}$ can be obtained directly, without requiring a square root, via Hitchin's proof of the existence of a hyperbolic metric in a given conformal class \cite{HitSel}.  Hitchin's construction goes the other way, by first producing a Hermitian metric on the spin bundle which then induces a hyperbolic metric on the tangent bundle.

\subsubsection{Convergence}
The Hermitian metric \eqref{hermetric} is well-defined because the integral converges, which uses the following local argument.
If $\Sigma$ is compact the integral \eqref{hermetric} clearly converges.  When  $\Sigma$ is non-compact, i.e. $D\neq\varnothing$, to see that the integral exists, consider a local coordinate $z$ with $z=0$ corresponding to a point of $D$ and a cusp of the metric.  Locally, the hyperbolic metric is given by 
$h=\frac{|dz|^2}{|z|^2(\log|z|)^2}$
and the $3/2$ differentials are given by $\eta=\frac{f(z)dz^{3/2}}{z}$ and $\xi=\frac{g(z)dz^{3/2}}{z}$ where $f(z)$ and $g(z)$ are holomorphic at $z=0$.  The local contribution to the metric $\int_{|z|<\epsilon}\frac{\overline{f}g\log|z||dz|^2}{|z|}$ exists since 
\begin{equation}  \label{locest}
\int_{|z|<\epsilon}\frac{|\log|z||}{|z|}|dz|^2=\int_0^\epsilon|\log r| drd\theta=2\pi|\epsilon\log\epsilon-\epsilon|<2\pi \quad\Leftarrow\quad\epsilon<1.
\end{equation}

\subsubsection{Characteristic differential form} The bundle $E_{g,n}$ is holomorphic and together with the Hermitian metric on $E_{g,n}$, it uniquely determines the Chern connection, a metric connection $A$ on $E_{g,n}$ satisfying $\overline{\partial}_A=\overline{\partial}$ the natural operator defining the holomorphic structure on $E_{g,n}$.  Then $e(E_{g,n})$ is defined to be the Pfaffian of the curvature of $A$ via 
\begin{equation}  \label{eulerform}
e(E_{g,n}):=\left(\frac{1}{4\pi}\right)^N\text{pf}(F_A).
\end{equation}
The Weil--Petersson form $\omega_{WP}$ pulls back to ${\cal M}_{g,n}^{\text{spin}}$ under $p:{\cal M}_{g,n}^{\text{spin}}\to{\cal M}_{g,n}$, the map that forgets the spin structure.  Alternatively, it is defined directly via Goldman's symplectic form \cite{GolSym} on the character variety.  By abuse of terminology it is still denoted by $\omega_{WP}$.
\begin{thm}[\cite{NorSup}]  \label{volequal}
The differential form
\[\mu=e(E_{g,n})\exp\omega_{WP}\] 
defines a finite measure on ${\cal M}_{g,n}^{\text{spin}}$.
\end{thm}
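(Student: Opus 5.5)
The plan is to show that the smooth top-degree form $\mu$ on the open orbifold ${\cal M}_{g,n}^{\rm spin}$ has finite total mass by comparing it with a closed form on a compactification. I will use the Deligne--Mumford type compactification $\overline{{\cal M}}_{g,n}^{\rm spin}$ of twisted stable spin curves, whose algebro-geometric construction is recalled in Section~\ref{compact}. Two facts are needed. First, $\exp\omega_{WP}$ has finite volume: $\omega_{WP}$ extends as a closed current to $\overline{{\cal M}}_{g,n}$ (Wolpert, Masur), with cohomology class $2\pi^2\kappa_1$, and pulls back to $\overline{{\cal M}}_{g,n}^{\rm spin}$ under the finite map $p$. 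Second, the Euler form $e(E_{g,n})$ built from the Hermitian metric \eqref{hermetric} has at worst mild growth near the boundary divisors.

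The key step is to control the Hermitian metric \eqref{hermetric} near a node. I would choose plumbing coordinates $xy=t$ and write $|t|=e^{-2\pi^2/\ell}$, where $\ell$ is the length of the pinching geodesic. Near the collar, the hyperbolic metric is comparable to the model metric $\bigl(\tfrac{\pi}{\log|t|}\bigr)^2\csc^2(\cdot)|dz|^2/|z|^2$. A local estimate extending \eqref{locest} then shows the following. Holomorphic $3/2$-differentials whose limits have at most simple poles at the node (the NS-type sections of the limit, or Ramond-type sections vanishing appropriately) have norms bounded above and below uniformly. The remaining sections have norms blowing up or degenerating at most like a power of $\ell\sim1/|\log|t||$.

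So the metric is \emph{good} in Mumford's sense: it has Poincaré growth, and the connection and curvature forms satisfy the same estimates as the Poincaré metric. By Mumford's theorem, the Chern forms, and hence the Pfaffian $\mathrm{pf}(F_A)$, extend as closed currents of finite mass on $\overline{{\cal M}}_{g,n}^{\rm spin}$. They represent the Chern/Euler class of the extended bundle $-R\pi_*\ce^\vee$ over the compactification. This extension is again a vector bundle, since $H^0$ vanishes fibrewise on stable twisted spin curves.

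Finally, the product of these currents needs care. $\omega_{WP}$ is itself a good form in the same sense, by Wolpert's asymptotics of the WP metric $\sim \sum d\ell_i^2/\ell_i+\ldots$. Products of good forms are locally integrable, because near a normal crossing divisor each is bounded by Poincaré-type forms, whose wedge products have finite local volume. Hence $\int_{{\cal M}_{g,n}^{\rm spin}}|\mu|<\infty$. As a byproduct, the integral equals the intersection number $\int_{\overline{{\cal M}}^{\rm spin}_{g,n}}e(E_{g,n})\exp(2\pi^2\kappa_1)$.

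The main obstacle is the boundary estimate on the Hermitian metric. It must cover all node types: NS versus Ramond nodes, separating and non-separating. At Ramond nodes the rank of $E$ on the two sides differs, so one must check that the limiting fibre of $E$ matches $H^1$ on the nodal twisted curve. For Ramond nodes I would first try the extra factor $\sqrt{h}$ in the collar and compute explicitly on the standard annulus with $\mathbb Z$-Fourier modes $z^{k}\,dz^{3/2}/z$. The expected behaviour is that the half-integer versus integer modes give norms of order $\ell^{\pm1/2}$, which is the required power growth.
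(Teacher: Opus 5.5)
Your outline is sound, but it follows a different route from the paper's.

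\textbf{The paper's route.} In the proof of Theorem~\ref{thvoltheta} the paper asserts that the Hermitian metric \eqref{hermetric} extends \emph{smoothly} to $\widehat{E}_{g,n}$ over $\overline{{\cal M}}^{\rm spin}_{g,n}$. The reason given is that $3/2$-differentials have at worst simple poles at nodes, as at marked points, so the cusp estimate \eqref{locest} applies. Then $e(E_{g,n})$ becomes a smooth form on a compact orbifold representing $c_{\rm top}(\widehat{E}_{g,n})$. Finiteness, together with \eqref{voltheta}, follows by pairing it with Wolpert's current extension of $\omega_{WP}$.

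\textbf{Your route, and what each buys.} You ask only that the metric be good in Mumford's sense. You then get integrability from Poincar\'e growth of every factor, including $\omega_{WP}$ via the Masur--Wolpert asymptotics. The paper's argument is shorter and uses only Chern--Weil theory on a compact space, but it stands or falls with the smooth extension. Yours needs a much weaker boundary statement and still recovers the intersection formula through Mumford's theorem. For that you must check that the extension singled out by the good metric is $-R\pi_*\ce^\vee$.

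\textbf{At Ramond nodes your framework is what is actually needed.}
\begin{itemize}
\item There the limiting fibre $H^0(\cc,\omega_\cc\otimes\theta)$, with $\omega_\cc$ dualizing, contains sections that do not vanish at the node in the frame $\tfrac{dx}{x}\otimes s$. Otherwise the rank would drop by one, as the count on the normalization shows. On the collar these sections have a zero mode $(dz/z)^{3/2}$.
\item Use the collar metric $(\pi/\log|t|)^2\csc^2(\pi\log|z|/\log|t|)\,|dz|^2/|z|^2$ on $|t|<|z|<1$. One computes $\|(dz/z)^{3/2}\|^2=\tfrac{4}{\pi}(\log|t|)^2\asymp\ell^{-2}$.
\item On the nodal curve the corresponding cusp integral $\int_0^\epsilon|\log r|\,dr/r$ diverges. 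The simplest instance is the odd component over $\overline{{\cal M}}_{1,1}$, with $\eta=(dz/z)^{3/2}$ on $\bc^*/t^{\bz}$.
\item So the metric does not extend even continuously across Ramond boundary divisors, contrary to the paper's remark that convergence there is immediate. Its growth is nevertheless of Poincar\'e type, which is exactly what your approach can absorb.
\item Your guess of norms of order $\ell^{\pm1/2}$ should be replaced by this picture: there is a single unbounded direction, with norm $\asymp\ell^{-1}$, and the other modes stay bounded.
\end{itemize}

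\textbf{What you still owe.} The goodness itself is asserted rather than proved. Bounds on the norms of a frame do not give Poincar\'e growth of the connection form $\partial h\,h^{-1}$ or of its $\bar\partial$. For that you need two things: the $t$-dependence of the hyperbolic metric on the plumbed collar (for instance Wolpert's expansion in powers of $1/\log|t|$), and a holomorphic frame of $\widehat{E}_{g,n}$ adapted to the plumbing, with derivative estimates. This is the real analytic content of the proof.
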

Push forward the restriction of this measure to each component via  \[p^\sigma:{\cal M}_{g,\sigma}^{\text{spin}}\to{\cal M}_{g,n}\]  for $p^\sigma=p|_{{\cal M}_{g,\sigma}^{\text{spin}}}$ to get a collection of measures over ${\cal M}_{g,n}$.  We further normalise the measure by $\epsilon_{g,\sigma}=(-1)^n2^{g-1+\frac12(n+|\sigma|)}$.
\begin{definition}  \label{defmeas} 
For each $\sigma\in\{0,1\}^n$, define a spin measure on ${\cal M}_{g,n}$ by
\[\mu_\sigma:=\epsilon_{g,\sigma}p^\sigma_*(\mu)\in\Omega^{\text{top}}{\cal M}_{g,n}.
\]
\end{definition}
Note that since $p$ is a finite covering map, a top degree differential form pushes forward to a top degree differential form, which agrees with the natural pushforward measure.

The Weil--Petersson form can be expressed explicitly in terms of natural coordinates on Teichm\"uller space, such as Penner coordinates \cite{PenDec} or Fenchel-Nielsen coordinates \cite{WolSym}, as in \eqref{WPFN} below. 
\begin{exercise}  {\bf Open problem.}
 Express the measure $\mu_\sigma$ in terms of Penner coordinates or Fenchel-Nielsen coordinates.
\end{exercise}
The total measure defines a volume which coincides with the super Weil--Petersson volume of the moduli space of super Riemann surfaces \cite{NorSup,SWiJTG}.  The restriction to components gives the following collection of measures.
\begin{definition}
\[\widehat{V}_{g,\sigma}:=\int_{{\cal M}_{g,n}} \mu_\sigma,\quad\sigma\in\{0,1\}^n.
\]
\end{definition}
Theorem~\ref{disk} produces calculations of some of these volumes, such as
\[ \widehat{V}_{0,\{1,0^4\}}=6\pi^2,\qquad \widehat{V}_{0,\{1,0^6\}}=330\pi^4.
\]

\subsubsection{Hyperbolic construction of characteristic forms}
The construction of a canonical Euler form in \eqref{eulerform} uses the conformal structure on a hyperbolic surface $\Sigma$ via the appearance of $3/2$ differentials.  A spin hyperbolic surface may be constructed geometrically by gluing hyperbolic pairs of pants along geodesic boundary components, or analytically from a discrete faithful representation of its fundamental group into $SL(2,\br)$. In either description, the induced conformal structure is obtained only through the uniformization theorem and is therefore transcendental in nature. 

Here we instead describe a construction of an Euler form entirely in terms of hyperbolic geometry, using twisted harmonic $1$-forms.  Conjecturally this construction produces the same measure on ${\cal M}_{g,\sigma}^{\text{spin}}$.

For simplicity, we will consider the case when $\Sigma$ is compact.  Given a spin structure on $\Sigma$, which gives rise to the associated flat vector bundle $T_\Sigma^\frac12$, denote the flat connection on $T_\Sigma^\frac12$ by $\nabla_h$.  
Since $\nabla_h^2=0$, the flat connection defines the twisted de Rham complex
\[
0\longrightarrow \Omega^0(\Sigma,T_\Sigma^\frac12)
\xrightarrow{\nabla_h}
\Omega^1(\Sigma,T_\Sigma^\frac12)
\xrightarrow{\nabla_h}
\Omega^2(\Sigma,T_\Sigma^\frac12)
\longrightarrow0
\]
The twisted Poincar\'e lemma identifies the sheaf cohomology of the locally constant sheaf defined by $T_\Sigma^\frac12$ with the cohomology of this complex.  In particular:
\[
H^1_{dR}(\Sigma,T_\Sigma^\frac12)
\cong
\frac{\ker\bigl(\nabla_h:\Omega^1(\Sigma,T_\Sigma^\frac12)\to\Omega^2(\Sigma,T_\Sigma^\frac12)\bigr)}
{\operatorname{im}\bigl(\nabla_h:\Omega^0(\Sigma,T_\Sigma^\frac12)\to\Omega^1(\Sigma,T_\Sigma^\frac12)\bigr)}.
\]
The hyperbolic metric on $\Sigma$ induces a metric on $T_\Sigma^\frac12$. Let $\nabla_h^*$ denote the formal adjoint of $\nabla_h$, and define the twisted Laplacian
\[
\Delta_h=\nabla_h\nabla_h^*+\nabla_h^*\nabla_h.
\]
By Hodge theory, every class in $H^1_{dR}(\Sigma,T_\Sigma^\frac12)$ has a unique harmonic representative. Thus
\[
H^1_{dR}(\Sigma,T_\Sigma^\frac12)
\cong
\ch^1(\Sigma,T_\Sigma^\frac12)
\]
where
\[
\ch^1(\Sigma,T_\Sigma^\frac12)
=
\left\{
\eta\in\Omega^1(\Sigma,T_\Sigma^\frac12)
\;\middle|\;
\nabla_h\eta=0,\  \nabla_h^*\eta=0
\right\}
=
\ker\Delta_h\big|_{\Omega^1(\Sigma,T_\Sigma^\frac12)}.
\]
The bundle $E_g=E_{g,0}$ has fibres $\ch^1(\Sigma,T_\Sigma^\frac12)$ and naturally sits inside the trivial bundle with fibres $\Omega^1(\Sigma,T_\Sigma^\frac12)$. There is a natural connection $\alpha$ on this bundle obtained by orthogonally projecting the derivative of a local harmonic section back onto the harmonic subspace of $\Omega^1(\Sigma,T_\Sigma^\frac12)$.  

Modify the construction of the Euler form \eqref{eulerform} by replacing the connection $A$ with the connection $\alpha$ to produce a new form.
\begin{definition}
Define a characteristic form on $E_g$ by
\begin{equation}  \label{charform}
\xi(E_g):=\left(\frac{1}{4\pi}\right)^N\text{pf}(F_\alpha).
\end{equation}
\end{definition}
\begin{exercise} {\bf Open problem.}\\
Prove that there exists a differential form $\eta\in\Omega^{4g-5}({\cal M}_g^{\text{spin}},\br)$
\[ e(E_g)-\xi(E_g)=d\eta
\]
is exact  and hence they defines the same total measure
\[\int_{{\cal M}_g^{\text{spin}}}e(E_{g})\exp\omega_{WP}=\int_{{\cal M}_g^{\text{spin}}}\xi(E_{g})\exp\omega_{WP}.\] 
\end{exercise}

 \subsubsection{Hyperbolic surfaces with boundary}
The preceding construction extends to a compact hyperbolic surface $\Sigma$ with
geodesic boundary
\[
\partial\Sigma=\beta_1\sqcup\cdots\sqcup\beta_n.
\]
Fix a spin structure and let $T_\Sigma^{1/2}$ be the associated flat rank two
bundle with flat connection $\nabla_h$.  The hyperbolic metric and the metric on
$T_\Sigma^{1/2}$ again define the formal adjoint $\nabla_h^*$ and twisted
Laplacian
\[
\Delta_h=\nabla_h\nabla_h^*+\nabla_h^*\nabla_h.
\]
In the presence of boundary, harmonic representatives are obtained by imposing
Neumann boundary conditions \cite{SchHod}.  Define
\[
\ch^1(\Sigma,T_\Sigma^{1/2})
=
\left\{
\eta\in\Omega^1(\Sigma,T_\Sigma^{1/2})
\;\middle|\;
\nabla_h\eta=0,\ 
\nabla_h^*\eta=0,\ 
\iota_\nu\eta|_{\partial\Sigma}=0
\right\}.
\]
By Hodge theory for manifolds with boundary,
\[
H^1_{dR}(\Sigma,T_\Sigma^{1/2})
\cong
\ch^1(\Sigma,T_\Sigma^{1/2}).
\]
\begin{exercise}
Let $\Sigma$ be a compact Riemannian surface with boundary and let $E$ be a
flat orthogonal bundle.  Show, using integration by parts, that if
\[
\nabla\eta=0,\qquad \nabla^*\eta=0,
\qquad
\iota_\nu\eta|_{\partial\Sigma}=0,
\]
then $\eta$ is a harmonic $1$-form satisfying Neumann boundary conditions.
Explain why these harmonic forms represent $H^1(\Sigma,E)$ rather than
$H^1(\Sigma,\partial\Sigma;E)$.
\end{exercise}
Now fix the geodesic boundary lengths $L_1,\ldots,L_n$.  As $\Sigma$ varies in
the corresponding moduli space of spin hyperbolic surfaces, the spaces
$\ch^1_{\mathrm{abs}}(\Sigma,T_\Sigma^{1/2})$ form a vector bundle
\[
E_{g,n}(L_1,\ldots,L_n)
\longrightarrow
{\cal M}_{g,n}^{\mathrm{spin}}(L_1,\ldots,L_n).
\]
As in the closed case, orthogonal projection of the derivative of a local
family of twisted harmonic forms onto the harmonic subspace defines a
connection $\alpha_L$ on $E_{g,n}(L_1,\ldots,L_n)$.  We therefore obtain the
characteristic form
\[
\xi\bigl(E_{g,n}(L_1,\ldots,L_n)\bigr)
:=
\left(\frac{1}{4\pi}\right)^N
\operatorname{pf}(F_{\alpha_L}).
\]
This gives a construction depending only on the hyperbolic metric, its
geodesic boundary, and the flat bundle determined by the spin structure, and
hence avoids passing through the conformal description by $3/2$-differentials.  It would be useful to calculate it for a hyperbolic pair of pants which is the building block of general hyperbolic surfaces. 
It is natural to conjecture that this characteristic form represents the same
Euler class, and produces the same measure, as the Euler form obtained from
the holomorphic construction.

\subsection{Deformation of the Weil--Petersson measure}  \label{defWP}

There is a natural family of deformations of the Weil--Petersson measure obtained via deforming cusps to geodesic boundary components.  This was used by Mirzakhani \cite{MirSim} to define and calculate Weil--Petersson volumes depending on the lengths of the geodesic boundary components.  For $\mathbf L=(L_1,\ldots,L_n)\in\mathbb R_{\geq 0}^n$, let
\begin{align*}
{\cal M}_{g,n}(\mathbf L)=\Big\{(\Sigma,\beta_1,&...,\beta_n)\mid \Sigma \text{ oriented, genus }g\text{ hyperbolic surface,}\\ 
&\text{geodesic boundary } \partial \Sigma=\sqcup\beta_i\text{ of lengths }\ell(\beta_i)=L_i\Big\}/\sim
\end{align*}
The equivalence is up to isometries that preserve the boundary components.  A non-trivial isometry rotates each boundary component non-trivially.  When $\mathbf L=0$, the boundary components are replaced by cusps and
\[
{\cal M}_{g,n}(0)={\cal M}_{g,n}.
\]
\subsubsection{Fenchel-Nielsen coordinates}
Let $\Sigma$ be an oriented surface of genus $g$ with $n$ labeled boundary components.  Choose a pants decomposition
\[
\Sigma=P_1\cup\cdots\cup P_{2g-2+n}
\]
where $P_i\cong S^2-\text{ three disjoint disks}$ is a pair of pants.
The decomposition is obtained by cutting along $3g-3+n$
disjoint simple closed curves $\gamma_1,\ldots,\gamma_{3g-3+n}$. 
Given a hyperbolic structure on $\Sigma$ with geodesic boundary, each $\gamma_i$ has a unique geodesic representative of length
$
\ell_i>0.
$
The hyperbolic metric on each pair of pants is uniquely determined by the lengths of its three boundary components. To reconstruct the surface, the pairs of pants are glued along boundary components of equal length. Each such gluing has one additional real parameter
$
\tau_i\in\mathbb R,
$
called the \emph{twist parameter}, which measures the relative displacement of the seams of the pants on either side.

For fixed boundary lengths $\mathbf L=(L_1,\ldots,L_n)$, the resulting length and twist parameters
\[
(\ell_1,\tau_1,\ldots,\ell_{3g-3+n},\tau_{3g-3+n})
\]
give global coordinates on the Teichm\"uller space $\mathcal T_{g,n}(\mathbf L)$. These are the \emph{Fenchel--Nielsen coordinates}, giving a diffeomorphism
\[
\mathcal T_{g,n}(\mathbf L)
\cong
\mathbb R_{>0}^{\,3g-3+n}\times\mathbb R^{\,3g-3+n}.
\]
The symplectic form on $\mathcal T_{g,n}(\mathbf L)$ is defined by
\begin{equation} \label{WPFN}
\omega_{\mathbf L}:=\sum_id\ell_i\wedge d\tau_i.
\end{equation} 
When $\mathbf L=0$, Wolpert \cite{WolSym} proved that \eqref{WPFN} defines the Weil--Petersson form $\omega_{WP}$, and in particular, it is invariant under the action of the mapping class group so descends to ${\cal M}_{g,n}$.  By doubling along geodesic boundary components, it follows from Wolpert's proof that \eqref{WPFN} is invariant under the action of the mapping class group so $\omega_{\mathbf L}$ descends to ${\cal M}_{g,n}(\mathbf L)$.

There is a diffeomorphism
\[
\phi_{\mathbf L}:{\cal M}_{g,n}\to{\cal M}_{g,n}(\mathbf L)
\]
with inverse obtained by grafting a semi-infinite flat cylinder onto each geodesic boundary component to produce a punctured Riemann surface \cite{MonRie}.  Using this, $\omega_{\mathbf L}$ may be regarded as a family of symplectic forms on the single space ${\cal M}_{g,n}$.  
\begin{definition}
Define
\[
\omega(L_1,\ldots,L_n)
:=
\phi_{\mathbf L}^*\omega_{\mathbf L}
\in\Omega^2({\cal M}_{g,n}).
\]
\end{definition}
By construction,
\[
\omega(0,\ldots,0)=\omega_{WP}.
\]

\subsubsection{Deformation of spin measures}

It is straightforward now to insert the deformation of the Weil--Petersson symplectic form into Definition~\ref{defmeas} to produce a family of measures depending on $(L_1,...,L_n)$.  Let
\[ \mu(L_1,...,L_n):=e(E_{g,n})\exp\omega_{WP}(L_1,...,L_n)\] 
and normalise by $\epsilon_{g,n,m}=(-1)^{n+m}2^{g-1+n+\frac12m}$.
\begin{definition}  \label{defmeasL}
Define a family of spin measures on ${\cal M}_{g,n+m}$ by
\[\mu^{(m)}(L_1,...,L_n):=\epsilon_{g,n,m}p^\sigma_*\big(\mu(L_1,...,L_n)\big)\in\Omega^{\text{top}}{\cal M}_{g,n+m}
\]
where $\sigma=(1^n,0^m)$.  Each total measure defines the volume
\begin{equation}   \label{voldef}
\widehat{V}_{g,n}^{(m)}(L_1,...,L_n):=\int_{{\cal M}_{g,n+m}}\hspace{-5mm} \mu^{(m)}(L_1,...,L_n).
\end{equation}
\end{definition}
It is proven in \cite{NorSup} that the measures $\mu^{(m)}(L_1,...,L_n)$ are finite and the volumes $\widehat{V}_{g,n}^{(m)}(L_1,...,L_n)$ are polynomial in $L_1,...,L_n$.  Calculations of $\widehat{V}_{g,n}^{(m)}(L_1,...,L_n)$, and many properties, currently use intersection theory in algebraic geometry, described in Section~\ref{compact}.  A second approach due to Stanford and Witten \cite{SWiJTG}, and described in Section~\ref{super}, uses supergeometry techniques which follow the method of Mirzakhani's calculation of Weil--Petersson volumes \cite{MirSim}.  Johnson \cite{JohRam} computes the super volumes with Ramond punctures in a different way using matrix model techniques.   One aim of this paper is to lay the foundations of a proof using differential geometric methods.

\subsection{Relation to supergeometry}   \label{super}

We briefly describe the supergeometric derivation of the super volumes by Stanford--Witten \cite{SWiJTG}.

\subsubsection{Supermanifolds}

A locally ringed space $(M,\cf)$ consists of a topological space $M$ together with a sheaf of rings $\cf$ whose stalks are local rings.  A fundamental example is the smooth manifold $\br^m$ equipped with its sheaf $C^\infty_{\br^m}$ of locally smooth functions.  Its supercommutative analogue is
\[
\br^{m|n}
=
(\br^m,\co_{\br^{m|n}}),
\qquad
\co_{\br^{m|n}}
=
C^\infty_{\br^m}\otimes\Lambda^*(\br^n).
\]
A real supermanifold of dimension $m|n$ is a locally ringed space
\[
\widehat M=(M,\co_{\widehat M})
\]
which is locally isomorphic to $\br^{m|n}$.  The topological space $M$ is called the \emph{reduced space} or \emph{body} of $\widehat M$.

Similarly, define
\[
\bc^{m|n}
=
(\bc^m,\co_{\bc^{m|n}}),
\qquad
\co_{\bc^{m|n}}
=
\co_{\bc^m}\otimes\Lambda^*(\bc^n),
\]
where $\co_{\bc^m}$ is the sheaf of locally holomorphic functions.  A complex supermanifold is a locally ringed space locally isomorphic to $\bc^{m|n}$.

A morphism
\[
(f,F):
(M_1,\co_{\widehat M_1})
\longrightarrow
(M_2,\co_{\widehat M_2})
\]
consists of a continuous map $f:M_1\to M_2$ together with a graded sheaf homomorphism
\[
F:\co_{\widehat M_2}\longrightarrow f_*\co_{\widehat M_1}.
\]
A family of supermanifolds is a morphism of supermanifolds
\[
\widehat M\longrightarrow S,
\]
whose fibres are supermanifolds.

It is also useful to describe a supermanifold through its functor of points.  Define
\[
\Lambda_L(\br)=\Lambda^*(\br^L)
\]
and let
\[
\Lambda(\br)=\lim_{L\to\infty}\Lambda_L(\br).
\]
The Grassmann algebra decomposes into its even and odd parts,
\[
\Lambda(\br)
=
\Lambda^0(\br)\oplus\Lambda^1(\br),
\]
also called its \emph{bosonic} and \emph{fermionic} parts.  We denote the purely odd superpoint by
\[
\mathbb{A}^{0|\bullet}_{\br}
=
(\{\mathrm{pt}\},\Lambda(\br)).
\]
The topological manifold $M$ given by the reduced space of $\widehat M$ sits naturally inside the supermanifold $\widehat M$.
\[
\begin{tikzcd}
M \arrow[r] \arrow[d]
    & \widehat M \arrow[d] \\
\{\mathrm{pt}\} \arrow[r]
    & \mathbb{A}^{0|\bullet}_{\br}.
\end{tikzcd}
\]
The right-hand vertical map represents a family of points of $\widehat M$ parametrised by the odd superpoint, while the lower left corner recovers the ordinary points of its reduced space.

For example, the $\mathbb{A}^{0|\bullet}_{\br}$-points of super Euclidean space are
\[
\br^{m|n}(\mathbb{A}^{0|\bullet}_{\br})
=
\left\{
(z_1,\ldots,z_m\mid\theta_1,\ldots,\theta_n)
\;\middle|\;
z_i\in\Lambda^0(\br),\
\theta_j\in\Lambda^1(\br)
\right\}.
\]
Thus the functor-of-points description replaces the formal odd coordinates of the locally ringed-space description by odd elements of a Grassmann algebra.

\subsubsection{Super hyperbolic surfaces}

The supergroup $\operatorname{OSp}(1|2)$ is a supermanifold with reduced space $SL(2,\br)$.  
\[
\begin{tikzcd}
SL(2,\br) \arrow[r] \arrow[d]
    & \operatorname{OSp}(1|2) \arrow[d] \\
\{\mathrm{pt}\} \arrow[r]
    & \mathbb{A}^{0|\bullet}_{\br}.
\end{tikzcd}
\]
Its $\mathbb{A}^{0|\bullet}_{\br}$-points may be written
\[ \text{OSp}(1|2)=\left\{ \left(\left.\begin{array}{c|c}\begin{array}{cc}a\quad&\quad b\\c\quad&\quad d\end{array}&\begin{array}{c}\alpha\\\beta\end{array}\\ \hline a\beta-c\alpha\quad b\beta-d\alpha&1-\alpha\beta\end{array}\right)\right\vert
\begin{array}{cc}a,b,c,d\in\Lambda_0,&  \alpha,\beta\in \Lambda_1\\
ad-bc=1+\alpha\beta&\end{array}\hspace{-1mm}\right\}.\]

Define the super hyperbolic plane by
\[
\widehat{\bh}
=
\left\{
(z\mid\theta)\in\bc^{1|1}(\mathbb{A}^{0|\bullet}_{\br})
\;\middle|\;
\operatorname{Im}z^\#>0
\right\}
\]
where $z=z^\#+z_{\mathrm{nil}}$ for $z_{\mathrm{nil}}$ nilpotent defines the {\em body} $z^\#$ of $z$. 

There is an action of $\operatorname{OSp}(1|2)$ on $\widehat{\bh}$ extending the usual action of $PSL(2,\br)$ on $\bh$ by M\"obius transformations.  Explicitly,
\[
(z\mid\theta)
\longmapsto
\left(
\frac{az+b}{cz+d}
+
\theta\frac{\gamma z+\delta}{(cz+d)^2}
\;\middle|\;
\frac{\gamma z+\delta}{cz+d}
+
\frac{\theta}{cz+d}
\right),
\]
where
\[
\gamma=a\beta-c\alpha,
\qquad
\delta=b\beta-d\alpha.
\]
Taking the body of the even coordinates and setting the odd coordinates to zero defines the reduced $SL(2,\br)$-valued representation associated to an $\operatorname{OSp}(1|2)$-valued representation.  
A discrete subgroup of $\operatorname{OSp}(1|2)$ is called \emph{Fuchsian} if its image in $SL(2,\br)$ is Fuchsian.  A Fuchsian representation
\[
\pi_1(\Sigma)
\longrightarrow
\operatorname{OSp}(1|2)
\]
defines a \emph{super hyperbolic surface}
\[
\begin{tikzcd}
\Sigma \arrow[r] \arrow[d]
    & \widehat\Sigma \arrow[d]
      \makebox[0pt][l]{\,$=\widehat{\bh}/\pi_1(\Sigma)$} \\
\{\mathrm{pt}\} \arrow[r]
    & \mathbb{A}^{0|\bullet}_{\br}
\end{tikzcd}
\]
with reduced space given by the ordinary hyperbolic surface $\Sigma=\bh/\pi_1(\Sigma).$
\subsubsection{Super volumes}

A supermanifold possesses both even and odd directions, and integration over the odd directions is defined by Berezin integration.  In local coordinates
\[
(x_1,\ldots,x_m\mid\theta_1,\ldots,\theta_n),
\]
a super measure is locally of the form
\[
\mu
=
f(x,\theta)\,
dx_1\cdots dx_m\,
d\theta_1\cdots d\theta_n,
\]
where Berezin integration in the odd variables extracts the coefficient of
$\theta_1\cdots\theta_n$.  The super volume is obtained by integrating such a super measure over the supermanifold.  In particular, the super Weil--Petersson volumes arise by integrating the natural super analogue of the Weil--Petersson measure over moduli spaces of super hyperbolic surfaces.

As in ordinary hyperbolic geometry, super hyperbolic surfaces can be decomposed into pairs of pants.  A super hyperbolic pair of pants is determined by three even length coordinates together with two odd coordinates
\[
(x,y,z\mid\alpha,\beta).
\]
Stanford and Witten \cite[D.44]{SWiJTG} derived the function $D(x,y,z)$ defined in \eqref{kerD} via analysing  super hyperbolic pairs of pants.  They produced a super McShane identity which they used in an analogous way to Mirzakhani's methods in \cite{MirSim} to derive the recursion in Theorem~\ref{NSrec}. 
In \cite{HPZSup}, Huang, Penner and Zeitlin prove a super McShane identity for $(g,n)=(1,1)$ by a different method, via a generalisation of Penner coordinates.  The odd parameters enter both the super McShane identity and, through Berezin integration, the corresponding super volume recursion.   The same heuristic argument should extend to general $s$.  To make this argument rigorous, one must in particular justify the vanishing of the volumes of moduli spaces of hyperbolic surfaces having positive-length geodesic boundary components of Ramond type, since such boundary components arise when decomposing into super hyperbolic pairs of pants.

\subsubsection{Volumes of Neveu--Schwarz components}
As described in \ref{spincomp}, 
${\cal M}_{g,\sigma}^{\rm spin}\subset{\cal M}_{g,n}^{\rm spin}$
is connected except in the Neveu--Schwarz case $\sigma=\{1^n\}$ when it consists of two connected components denoted even and odd.  Each component of ${\cal M}_{g,n}^{\rm spin}$ defines a measure on ${\cal M}_{g,n}$ via pushforward of a natural spin measure, and we choose to add the measures obtained from the two Neveu--Schwarz components.  Stanford and Witten \cite{SWiJTG} describe give a matrix model argument to show that these two components contribute equally to the pushforward measure, i.e. their volumes are equal.  This is unexpected since for any given genus $g$ hyperbolic surface, there are $2^g(2^{g-1}+1)$ even spin structures, and $2^g(2^{g-1}-1)$ odd spin structures, of Neveu--Schwarz type.  This defines a degree $2^g(2^{g-1}+1)$, respectively $2^g(2^{g-1}-1)$, cover of ${\cal M}_{g,n}$ leading one to expect the even component to have a greater volume than the odd component.   Moreover, in general different components have different volumes which can be seen via calculation.    
\begin{exercise} {\bf Open problem.}
Prove that the odd and even Neveu--Schwarz components have the same volume.
\end{exercise}
A calculation of the volumes of the odd and even Neveu--Schwarz components for genus one and two is given in \ref{evenodd}.

\section{Finiteness of volumes and recursion relations} \label{compact}
The finiteness of the spin measures defined in Section~\ref{spinmeas} is proven by extending the Weil--Petersson form $\omega^{\text{WP}}$ and the Euler form $e(E_{g,n})$ to cohomology classes on a compactification of  ${\cal M}_{g,n}^{\text{spin}}$.  The extension to cohomology classes enables intersection theory methods to produce proofs of the recursion relations.

To describe Wolpert's extension  \cite{WolWei} of $\omega^{\text{WP}}$ to a representative of a cohomology class over $\overline{{\cal M}}_{g,n}$ we briefly introduce natural classes in $H^*(\overline{\mathcal{M}}_{g,n},\mathbb{Q})$.  
There is a natural 
line bundle $L_i=s_i^*\omega_{\overline{\cal U}_{g,n}/
\overline{\mathcal{M}}_{g,n}}$ for each section 
$\overline{\mathcal{M}}_{g,n}\stackrel{s_i}{\longrightarrow}
\overline{\cal U}_{g,n}$ of the universal curve.  Define
$\psi_i:=c_1(L_i)\in H^2(\overline{\mathcal{M}}_{g,n},\mathbb{Q})$, and via the forgetful map $\overline{\modm}_{g,n+1}\stackrel{\pi}{\longrightarrow}\overline{\modm}_{g,n}$, define $\kappa_m:=\pi_*\psi_{n+1}^{m+1}\in H^{2m}(\overline{\mathcal{M}}_{g,n},\mathbb{Q})$.

Wolpert \cite{WolWei} showed that $\omega^{\text{WP}}$ extends as a current to $\overline{{\cal M}}_{g,n}$ which defines the cohomology class 
\[ [\widehat{\omega}^{\text{WP}}]=2\pi^2\kappa_1\in H^2(\overline{{\cal M}}_{g,n},\br).\]   
Mirzakhani \cite{MirSim} used Wolpert's result to produce an extension of $\omega(L_1,...,L_n)$ to $\overline{{\cal M}}_{g,n}$.
\begin{equation}  \label{wpext}
[\widehat{\omega}(L_1,...,L_n)]=2\pi^2\kappa_1+\frac12\sum_{i=1}^n L_i^2\psi_i\in H^2(\overline{{\cal M}}_{g,n},\br).
\end{equation}  
We will now describe the extension of $e(E_{g,n})$ to $\overline{{\cal M}}_{g,n}^{\text{spin}}$ proven in \cite{NorEnu}.

\subsection{Compactification of the moduli space}

In Sections~\ref{spinstr}, \ref{sheafc} and \ref{spinmeas}, we worked over smooth pointed curves $(C,D)$, and defined a vector bundle over their moduli space $E_{g,n}\to{\cal M}_{g,n}^{\text{spin}}$, given in Definition~\ref{obsbun}, with fibres 
\[E_{g,n}|_{[\Sigma]}=H^0(C,\omega_{C}^{3/2}(D)).\]  
A Hermitian metric on $E_{g,n}$, defined in \eqref{hermetric}, together with a holomorphic structure on $E_{g,n}$ produces a Chern connection $A$ with curvature $F_A$, and Euler form $e(E)$ so that $e(E_{g,n})\exp\omega_{WP}$ defines a finite measure on ${\cal M}_{g,n}^{\text{spin}}$.  As usual, $\omega_{WP}$ denotes both the Weil--Petersson form on ${\cal M}_{g,n}$ and its pullback to ${\cal M}_{g,n}^{\text{spin}}$.  The proof of finiteness of the measure uses an extension to the compactification which we describe next.

\subsection{Extension to the compactification}


A \textit{stable twisted curve} $\cc$ is a nodal curve whose irreducible
components are twisted curves and whose nodes have non-trivial isotropy.
It is equipped with a morphism which forgets the orbifold structure
\[
\rho:\cc\longrightarrow C,
\]
to its coarse curve $C$ which is a stable nodal curve.  We say that $ \cc $ is \textit{smooth} if its coarse curve $ C $ is smooth. 

A stable twisted curve with group $G=\bz_2$ has non-trivial isotropy group $\bz_2$ at the labeled points $p_i\in\cc$ and nodes---corresponding to nodes of the coarse curve. All other points have trivial isotropy group.  
The local structure at each nodal point is described by
\[
\{xy = 0\}/\mathbb{Z}_2,
\]
where the $ \mathbb{Z}_2 $-action is given by 
\[
(-1) \cdot (x, y) = (-x, -y).
\]
The moduli space of stable twisted curves with $\mathbb{Z}_{2}$ isotropy, denoted by $\overline{\mathcal{M}}_{g,n}^{(2)}$, parametrises stable curves with a $\mathbb{Z}_{2}$-orbifold structure at their labeled points and nodes.

Spin structures naturally generalise to twisted curves $\cc$ using orbifold line bundles, as described in \ref{orbdle},  now over curves with possible nodes.  An orbifold line bundle $\cf$ is a locally equivariant bundle over the local charts, such that at each nodal point there is an equivariant isomorphism of fibres. 

\begin{definition}  \label{def:modspincomp}
The moduli space of stable spin curves is defined by
\[\overline{{\cal M}}_{g,n}^{\text{spin}}=\{(\cc,\theta,p_1,...,p_n,\phi)\mid \phi:\theta^2\stackrel{\cong}{\longrightarrow}\omega_{\cc}^{\text{log}}\}
\]
where $\theta$ is a  line bundle over a stable, twisted curve $\cc$ with group $\bz_2$, each nodal point and labeled point $p_i$ has isotropy group $\bz_2$, and all other points have trivial isotropy group.
\end{definition}
As in the smooth case, the orbifold line bundle $\theta$ induces a representation $\bz_2\to\bz_2$ at each $p_i$ which produces a natural decomposition into components
\begin{equation}  \label{modspin}
\overline{{\cal M}}_{g,n}^{\text{spin}}=\bigsqcup_{\sigma\in\{0,1\}^n}\overline{{\cal M}}_{g,\sigma}^{\text{spin}}
\end{equation}
where $\sigma_i=0$ if the representation is trivial, or Ramond, and $\sigma_i=1$ if the representation is non-trivial, or Neveu--Schwarz.
 
We again denote by $p$ the natural degree $2^{2g-1}$ map obtained by forgetting the spin structure and the orbifold structure
\[ p:\overline{{\cal M}}_{g,n}^{\text{spin}}\to\overline{{\cal M}}_{g,n}
\]
and denote by $p^\sigma$ its restriction to $\overline{{\cal M}}_{g,\sigma}^{\text{spin}}$.

Denote by $\ce$ the universal spin structure defined over the universal curve  $\overline{\cu}_{g,n}^{\text{spin}}\stackrel{\pi}{\longrightarrow}\overline{{\cal M}}_{g,n}^{\text{spin}}$.  This produces a natural extension of the bundle $E_{g,n}\to{\cal M}_{g,n}^{\text{spin}}$. 
\begin{definition}  
Define the bundle $\widehat{E}_{g,n}:=-R\pi_*\ce^\vee\hspace{-1mm}\to\overline{{\cal M}}_{g,n}^{\text{spin}}$ with fibre $H^1(\cc,\theta^{\vee})$.   
\end{definition} 
The rank is calculated exactly as in the smooth case \eqref{dimh1} using Riemann-Roch to get:
\[\text{rank }\widehat{E}_{g,n}|_{\overline{{\cal M}}_{g,\sigma}^{\text{spin}}}=2g-2+n-\tfrac12R=2g-2+\tfrac12(n+|\sigma|).\]
Put $\widehat{E}_{g,\sigma}=\widehat{E}_{g,n}|_{\overline{{\cal M}}_{g,\sigma}^{\text{spin}}}$ and for $k=2g-2+\tfrac12(n+|\sigma|)$ define
\begin{equation}   \label{Omegadef}
\Omega_{g,\sigma}:=(-1)^n2^{k+1-g}p_*c_k(\widehat{E}_{g,\sigma})\in H^{2k}(\overline{{\cal M}}_{g,n},\bq).
\end{equation}

Finiteness of the measure $\mu^{(m)}(L_1,...,L_n)$ which is a (normalised) pushforward of $e(E_{g,n})\exp\omega_{WP}(L_1,...,L_n)$---see Definition~\ref{defmeas}---is  a consequence of the following expression for its total measure $\widehat{V}_{g,n}^{(m)}(L_1,...,L_n):=\int_{{\cal M}_{g,n+m}} \mu^{(m)}(L_1,...,L_n).$
\begin{thm} \cite{NorEnu}  \label{thvoltheta}
\begin{equation}  \label{voltheta} 
\widehat{V}_{g,n}^{(m)}(L_1,...,L_n)=\int_{\overline{{\cal M}}_{g,n+m}}\hspace{-5mm}\Omega_{g,\{1^n,0^m\}}\exp\left\{2\pi^2\kappa_1+\frac12\sum_{i=1}^n L_i^2\psi_i\right\}
\end{equation}
\end{thm}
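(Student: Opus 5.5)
The plan is to show that the top-degree form $\mu(L_1,\ldots,L_n)=e(E_{g,n})\exp\omega(L_1,\ldots,L_n)$ on ${\cal M}_{g,\sigma}^{\rm spin}$, $\sigma=(1^n,0^m)$, extends across $\overline{{\cal M}}_{g,\sigma}^{\rm spin}$ as a closed current. We then identify the cohomology class of each factor separately: the Euler form should give $c_k(\widehat{E}_{g,\sigma})$, and the deformed Weil--Petersson form should give the class in \eqref{wpext}. Pushing forward by $p^\sigma$ and inserting the normalisations then yields \eqref{voltheta}. The constants must be checked, namely that $\epsilon_{g,n,m}=(-1)^{n+m}2^{g-1+n+\frac12m}$ agrees with $(-1)^{n+m}2^{k+1-g}$ for $k=2g-2+n+\frac12 m$. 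This holds since $k+1-g=g-1+n+\frac12m$. We also need $p^\sigma_*$ to commute with integration, which is clear because $p^\sigma$ is a finite orbifold covering on the open part and the boundary has measure zero.

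\emph{Step 1 (Weil--Petersson factor).} Take Wolpert's result that $\omega_{WP}$ extends as a closed current with class $2\pi^2\kappa_1$, and Mirzakhani's extension \eqref{wpext} of $\omega(L_1,\ldots,L_n)$, where only the $n$ Neveu--Schwarz boundaries carry lengths. Pull these back to $\overline{{\cal M}}_{g,\sigma}^{\rm spin}$ via $p$. Along the boundary the pulled-back forms have at worst Poincar\'e-type growth in plumbing coordinates, with $\omega$ bounded by $C\,|dt|^2/(|t|^2\log^2|t|)$.

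\emph{Step 2 (Euler form).} The key analytic claim is that the Hermitian metric \eqref{hermetric} on $E_{g,n}=\widehat{E}_{g,n}|_{{\cal M}^{\rm spin}}$ is a good metric in Mumford's sense along the boundary divisor of $\overline{{\cal M}}_{g,\sigma}^{\rm spin}$. Concretely, near a node with plumbing parameter $t$, the $3/2$-differentials in $H^0(\omega^{3/2}(NS))$ degenerate to $3/2$-differentials on the normalisation with at most simple poles at the node branches. Using collar estimates for the hyperbolic metric, as in the convergence argument \eqref{locest} but now on the collar $|t|<|z|<1$ with metric $\sim \left(\frac{\pi}{\log|t|}\right)^2|dz|^2/(|z|^2\sin^2(\pi\log|z|/\log|t|))$, we show that the metric matrix and its inverse grow at most like powers of $\log|t|$ in a holomorphic frame of $\widehat{E}$. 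The connection and curvature forms then have Poincar\'e growth. By Mumford's theorem the Pfaffian $e(E_{g,n})$ extends as a closed current whose class is the top Chern class $c_k(\widehat{E}_{g,\sigma})$; the sign and the factor $(1/4\pi)^N$ convert the Pfaffian of the real rank $2k$ bundle into $c_k$. Products of good forms are integrable, so the total measure is finite and equals the cohomological pairing.

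The main obstacle is Step 2 at nodes of Ramond type. There the fibre dimension of $H^1(\theta^\vee)$ is constant, but the local model of $\widehat{E}$ involves sections that do not vanish at the node, which produces $\log|t|$ rather than bounded behaviour in the metric. I would first treat Neveu--Schwarz nodes, where sections vanish and the collar contribution is exponentially small in $1/|\log|t||$. For Ramond nodes I would compute the model on the standard hyperbolic collar explicitly and check that only powers of $\log|t|$ appear, which is enough for goodness. A further subtlety is that $\overline{{\cal M}}^{\rm spin}$ is an orbifold and the divisor is normal crossings only after passing to local orbifold charts, where the plumbing parameter is a square root; Mumford's criterion is applied on these charts.
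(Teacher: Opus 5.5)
Your outline has the same skeleton as the paper's proof, and your constant check $\epsilon_{g,n,m}=(-1)^{n+m}2^{k+1-g}$ is correct. Both arguments represent the Euler form on $\overline{\mathcal{M}}_{g,\sigma}^{\rm spin}$ by $c_k(\widehat{E}_{g,\sigma})$, pair it with the Wolpert--Mirzakhani class \eqref{wpext}, and push forward by $p^\sigma$. Where you differ is the key analytic lemma.

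\textbf{The paper's route.} It asserts that the metric \eqref{hermetric} extends \emph{smoothly} over all of $\overline{\mathcal{M}}_{g,n}^{\rm spin}$. At Neveu--Schwarz nodes it uses \eqref{locest}, and at Ramond nodes it claims the sections are holomorphic. Ordinary Chern--Weil theory on a compact orbifold then gives $c_{\rm top}$, and only the current class of $\exp\omega(L_1,\ldots,L_n)$ is needed.

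\textbf{Your route.} You ask only for a metric that is good in Mumford's sense. This costs you two things.
\begin{itemize}
\item Poincar\'e growth of the connection and curvature forms. This does not follow from growth bounds on $h$ and $h^{-1}$ alone. It needs derivative estimates on the hyperbolic metric in plumbing families, and that is where the real work of your Step 2 lies.
\item Poincar\'e growth of $\omega(L_1,\ldots,L_n)$, which you do state. It is needed so that Mumford's good transgression between $e(E_{g,n})$ and a smooth representative of $c_k$ can be integrated by parts against $\exp\omega$.
\end{itemize}

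\textbf{What your route buys at Ramond nodes.} Your approach survives the actual behaviour at Ramond nodes. There your diagnosis is right and the paper's claim that the Ramond case is immediate does not hold. On the orbifold chart $uv=0$ the dualising sheaf is generated by $du/u=-dv/v$. At a Ramond node the spin generator $s$ with $s^2=du/u$ is invariant. So $H^0(\cc,\omega_\cc\otimes\theta)$ contains a section whose leading terms on the two branches are matched multiples of $(dx/x)^{3/2}$ and $(dy/y)^{3/2}$. This is exactly the one dimension the normalisation lacks.
\begin{itemize}
\item On the nodal curve this section has infinite norm, since $\int|\log|x||\,|x|^{-2}|dx|^2$ diverges.
\item On the plumbed curve, $(dx/x)^{3/2}$ is the translation-invariant mode on the Ramond collar, and its norm grows like $\log^2|t|$.
\end{itemize}
So the metric does not extend continuously there, and a Mumford-type argument is genuinely required.

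\textbf{A correction on Neveu--Schwarz nodes.} Your description there is inaccurate, though harmless. The $3/2$-differentials have simple poles at these nodes, not zeros. Their collar contribution converges to the finite cusp value of \eqref{locest} rather than being exponentially small.
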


\begin{proof}  
We will show that the Hermitian metric \eqref{hermetric} on $E_{g,n}$ extends smoothly across the nodes to define a Hermitian metric on $\widehat{E}_{g,n}$.  The Chern form of the Hermitian metric on $\widehat{E}_{g,n}$ is used to define the Euler class of $\widehat{E}_{g,n}$ which represents the extension of the Euler form $e(E_{g,n})$.  Over the compact space $\overline{{\cal M}}_{g,n}^{\text{spin}}$, the Euler class is independent of the choice of connection on $\widehat{E}_{g,n}$, hence we can use $c_{\text{top}}(\widehat{E}_{g,\sigma})$.  Integrating its pushforward $\Omega_{g,\{1^n,0^m\}}$, defined in \eqref{Omegadef}, against the extension \eqref{wpext} of
$\omega(L_1,\ldots,L_n)$ gives \eqref{voltheta}.  The remainder of the proof gives the extension of the Hermitian metric which was proven in \cite{NorEnu} for Neveu--Schwarz nodes, but easily extends to allow Ramond nodes.

The proof of the extension of the Hermitian metric at Neveu--Schwarz nodes uses the same estimate \eqref{locest} used to prove existence of the integral \eqref{hermetric}.  The point is that $3/2$-differentials have the same pole behaviour at nodes and at labeled points. This is in contrast to quadratic differentials which have higher order poles at nodes than at labeled points, leading to divergence of the Weil--Petersson metric on $\overline{{\cal M}}_{g,n}$.

 The points in the fibre of $\widehat{E}_{g,n}^\vee$ given by elements of $H^0(C,\omega_C^{3/2}(D))$ have the same simple pole behaviour at nodes and at labeled points.  The pole at a node is present if the behaviour at the node is Neveu--Schwarz and removable if the behaviour at the node is Ramond.  Thus the estimate \eqref{locest} applies also at nodes to prove that the Hermitian metric on $H^0(C,\omega_C^{3/2}(D))$ is well-defined when $C$ is nodal.  
The conclusion is that the Hermitian metric on $E_{g,n}$ extends to a Hermitian metric on $\widehat{E}_{g,n}^\vee$.  Furthermore, it extends to a smooth Hermitian metric on $\widehat{E}_{g,n}^\vee$ because the hyperbolic metric $h$ varies smoothly outside of nodes and has a canonical form around nodes, and the Hermitian metric is defined via an integral over $1/\sqrt{h}$ times smooth sections. 

At Ramond nodes, the corresponding section is holomorphic, rather than having a simple pole, at the two points lying over the node, so the convergence estimate is immediate.  
\end{proof}

\subsection{Boundary strata}  \label{bdystr}
The pushforward of the measure from the components of ${\cal M}_{g,n}^{\rm spin}$ to ${\cal M}_{g,n}$ is important in the proofs of Theorems~\ref{NSrec} and \ref{thmsup}.  Although it may seem more natural to work with the measure over ${\cal M}_{g,n}^{\rm spin}$ rather than its pushforward, the structure of the cohomology classes defined by the extension of the measure is better understood on $\overline{{\cal M}}_{g,n}$, in particular, via a description of its boundary strata via stable graphs. 

\begin{definition} \label{stabgr}
A \emph{stable graph} $\Gamma$ of type $(g,n)$ consists of a connected graph
with $n$ labeled legs, together with a genus $g(v)\geq 0$ assigned to each
vertex $v\in V(\Gamma)$, satisfying
\[
g=h^1(\Gamma)+\sum_{v\in V(\Gamma)}g(v)
\]
and the stability condition $2g(v)-2+n(v)>0$ at every vertex, where $n(v)$ is the valence of $v$, including legs.
The set of half-edges is denoted by $H(\Gamma)$, with
\[
v:H(\Gamma)\longrightarrow V(\Gamma)
\]
sending each half-edge to its incident vertex.  The $n$ labeled legs are
half-edges, and each edge $e\in E(\Gamma)$ consists of two half-edges,
denoted $e_+$ and $e_-$.
\end{definition}

\begin{exercise}
List the stable graphs of type $(g,n)=(1,2)$.  Equip the stable graphs with spin structures in the two cases when the labeled points are Neveu--Schwarz, respectively Ramond.
\end{exercise}

A stable graph $\Gamma$ determines a closed stratum which is the
closure of the locus of stable curves whose dual graph is $\Gamma$
\[\overline{{\cal M}}_{\Gamma}\subset\overline{{\cal M}}_{g,n}
\]
with codimension equal to the number of edges of $\Gamma$.  To each vertex $v$ of $\Gamma$ associate a smooth curve of genus $g(v)$ with $n(v)$ marked points.  The marked points corresponding to the two half-edges of each edge of $\Gamma$ are identified to form a node.  This gives a morphism
\[
\phi_\Gamma:
\prod_{v\in V(\Gamma)}
\overline{\cal M}_{g(v),n(v)}
\longrightarrow
\overline{\cal M}_{g,n},
\]
with image a finite $\operatorname{Aut}(\Gamma)$ cover of $\overline{\cal M}_\Gamma$.


The forgetful map which forgets the last point
\[
\overline{{\cal M}}_{g,n+1}\stackrel{\pi}{\longrightarrow}\overline{{\cal M}}_{g,n}.
\]
acts on stable graphs by removing a labeled edge and contracting any unstable vertices.  

\subsubsection{Chiodo classes}
The pushforward of the {\em total} Chern class of $E_{g,n}$
defines classes for $2g-2+n>0$ known as Chiodo classes which define a cohomological field theory on the vector space $\bc^2$. 
Using Chiodo's calculation \cite{ChiTow} of the Chern character of $E_{g,n}$ and an expression for the total Chern class in terms of the Chern character, Janda,  Pandharipande, Pixton and Zvonkine \cite[Corollary 4]{JPPZDou} gave the following formula for the pushforward of the total Chern class as a weighted sum over stable graphs of type $(g,n)$, denoted $G_{g,n}$.  
\begin{align}  \label{JPPZ}
p_*c(E_{g,n})&=\sum_{\Gamma\in G_{g,n}}w(\Gamma)\in H^*(\overline{{\cal M}}_{g,n},\bq)\\
&=\sum_{(\Gamma,\sigma)\in G_{g,n}^{\text{spin}}} \frac{2^{2g-1-h^1(\Gamma)}}{|\text{Aut}(\Gamma)| }\phi_{\Gamma*}\Bigg[
\prod_{v \in V(\Gamma)}\hspace{-2mm}f(v)\prod_{e \in E(\Gamma)}\hspace{-2mm}g(e)\ \ \prod_{i=1}^nh(i)\Bigg]\nonumber
\end{align}
where we sum over all $\sigma:H(\Gamma)\to\{0,1\}$  satisfying $\sigma(e_+)=\sigma(e_-)$ on each edge $e\in E(\Gamma)$, and for any $v\in V(\Gamma)$
\[\sum_{\{\ell\mid v(\ell)=v\}}\hspace{-4mm}\sigma(\ell)\equiv n(v)(\text{mod } 2) .\]
Each choice of $\sigma$ specifies Neveu--Schwarz or Ramond behaviour at each node and labeled point.   The sum over all extensions $\sigma$ of $\sigma(\ell_i)=\sigma_i$ for each labeled leg $\ell_i$, gives $p_*c(E_{g,\sigma})$.

The vertex, edge and leg functions are:
\[f(v)=\exp\sum\limits_{m\geq 1} (-1)^{m}\frac{B_{m+1}(-1/2)}{m(m+1)}\kappa_m(v),
\]
\[ g(e)=\frac{1-\exp\sum\limits_{m \geq 1} (-1)^{m-1} \frac{B_{m+1}(\sigma(e_+)/2)}{m(m+1)} [(\psi_{e_+})^m-(-\psi_{e_-})^m]}{\psi_{e_+} + \psi_{e_-}} 
\]
and
\[h(i)=\exp\sum\limits_{m\geq 1}(-1)^{m-1} \frac{B_{m+1}(\sigma_i/2)}{m(m+1)} \psi^m_{i}
\]
where $B_m$ is a Bernoulli polynomial.

The top degree terms of \eqref{JPPZ} give an expression for $\Omega_{g,\sigma}$, defined in \eqref{Omegadef}, as a sum over weighted graphs.   We  give some explicit calculations of this weighted sum in \ref{evenodd}.

\subsubsection{Integrability}
The behaviour of the pushforward cohomology classes, corresponding to the pushforward measure is demonstrated elegantly in the Neveu--Schwarz case
\[\Theta_{g,n}:=\Omega_{g,\{1^n\}}\in H^{2g-2+n}(\overline{{\cal M}}_{g,n},\bq).\] 
Write $\Theta_\Gamma:=\otimes_{v\in V(\Gamma)}\Theta_{g(v),n(v)}\in\prod_{v\in V(\Gamma)}\overline{\cal M}_{g(v),n(v)}$.  The
intersection numbers of $\Theta_{g,n}$, which produce the volumes $\widehat{V}_{g,n}^{(0)}(L_1,...,L_n)$ via Theorem~\ref{thvoltheta}, are characterised by  four properties \cite{NorNew}:
\begin{enumerate}[(i)]
\setlength{\itemindent}{20pt}
\item $\Theta_{g,n}\in H^*(\overline{{\cal M}}_{g,n},\bq)$ is of pure degree, \label{pure}
\item $\phi_\Gamma^*\Theta_{g,n}=\Theta_\Gamma$,  \label{glue}
\item $\Theta_{g,n+1}=\psi_{n+1}\cdot\pi^*\Theta_{g,n}$,  \label{forget}
\item  $\Theta_{1,1}\neq 0$.  \label{base}
\end{enumerate}
From these properties, the intersection numbers are uniquely determined. This characterisation does not use spin structures, suggesting possible alternative descriptions of the classes.  One such description is given in \cite{CGGRel,KNoPol}.  

The intersection numbers, uniquely determined by (i)-(iv) above, assemble to produce a tau function of the KdV hierarchy.
\begin{thm}[Chidambaram, N.; Garcia-Failde, E. and Giacchetto, A, \cite{CGGRel}]  \label{CGG}
The  function
\begin{equation} \label{BGWTheta}
Z^{BGW}(\hbar,t_0,t_1,...)=\exp\sum_{g,n,\vec{k}}\frac{\hbar^{g-1}}{n!}\int_{\overline{{\cal M}}_{g,n}}\Theta_{g,n}\cdot\prod_{j=1}^n\psi_j^{k_j}\prod t_{k_j}\end{equation} 
is the Br\'ezin-Gross-Witten tau function of the KdV hierarchy. \label{thetatau}
\end{thm}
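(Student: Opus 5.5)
The plan is to identify both sides of the claimed equality as the unique solution of the same Virasoro constraints. The proof avoids any direct analysis of the KdV equations. It uses two inputs: the recursion of Theorem~\ref{NSrec}, transported to intersection numbers via Theorem~\ref{thvoltheta}, and the classical fact that the Br\'ezin--Gross--Witten function is characterised as the unique solution of Virasoro constraints $L_mZ=0$, $m\ge 0$, with a known dilaton shift. The KdV property of $Z^{BGW}$ is then inherited from the literature (Mironov--Morozov--Semenoff, Alexandrov, Do--Norbury).

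\emph{Step 1 (degree bookkeeping).} By property (\ref{pure}), $\Theta_{g,n}$ has degree $2g-2+n$, so $\int_{\overline{\cal M}_{g,n}}\Theta_{g,n}\prod\psi_j^{k_j}$ vanishes unless $\sum k_j=g-1$. Expanding \eqref{voltheta} at $m=0$, the coefficient of $\prod L_j^{2k_j}$ in $\widehat V_{g,n}(L_1,\ldots,L_n)$ is a combination of $\kappa_1$ and $\psi$ intersections against $\Theta_{g,n}$. The part of top degree $2(g-1)$ in the $L_j$ contains no $\kappa_1$ at all. It therefore equals
\[
\sum_{|k|=g-1}\int\Theta_{g,n}\prod_j\psi_j^{k_j}\frac{L_j^{2k_j}}{2^{k_j}k_j!}.
\]
So the generating function in \eqref{BGWTheta} is determined by the leading homogeneous parts of the polynomials $\widehat V_{g,n}$.

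\emph{Step 2 (leading-order recursion).} I substitute $L_j\mapsto\lambda L_j$ and $x,y\mapsto\lambda x,\lambda y$ in Theorem~\ref{NSrec} and extract the top power of $\lambda$. The moments $\int_0^\infty\int_0^\infty x^{2a+1}y^{2b+1}D(L,x,y)\,dx\,dy$ and $\int_0^\infty x^{2a+1}R(L,L_j,x)\,dx$ are explicit polynomials in $L,L_j$ whose coefficients are Bernoulli-type numbers, obtained by expanding $D$ in exponentials. Their top-degree parts are monomials, and these give a recursion for the numbers $\langle\tau_{k_1}\cdots\tau_{k_n}\rangle^\Theta_g$. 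The initial condition $\tfrac18 L_1\delta_{1,n}\delta_{1,g}$ gives $\langle\tau_0\rangle^\Theta_1=\tfrac18$, which is consistent with (\ref{base}).

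\emph{Step 3 (Virasoro and uniqueness).} I rewrite the leading-order recursion as $L_mZ=0$ for the operators
\[
L_m=-\tfrac12\partial_{t_{m}}+\tfrac{\hbar}{4}\sum_{i+j=m-1}\partial_{t_i}\partial_{t_j}+\sum_k\tfrac{(2k+2m+1)!!}{2^{m+1}(2k-1)!!}t_k\partial_{t_{k+m}}+\tfrac{\delta_{m,0}}{16},
\]
with the normalisation fixed by comparing with the known BGW constraints. The operator index $m$ corresponds to the exponent of $L_1$. The recursion determines all numbers from $\langle\tau_0\rangle_1$, so $Z$ coincides with the unique solution of these constraints, which is $Z^{BGW}$.

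The main obstacle is Step~2. One has to check that the top-degree parts of the $D$- and $R$-moments collapse exactly to the coefficients $(2k+2m+1)!!/(2k-1)!!$ appearing in the BGW Virasoro operators, with no lower-order contamination, and that the genus-reduction and splitting terms carry the correct factor $\tfrac12$. I would first compute these moments via the Laplace transform, i.e.\ spectral curve $x=z^2/2$, $y=1/z$ in topological recursion, where the $\lambda\to\infty$ limit becomes the $z\to0$ expansion of the recursion kernel. I would cross-check the small cases $\langle\tau_0^n\rangle_1$ and $\langle\tau_1\rangle_2=\tfrac3{128}$ against the BGW expansion.
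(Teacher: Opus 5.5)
\textbf{Your argument is circular.} Step~1 is correct: the top-degree part of $\widehat V_{g,n}$ contains no $\kappa_1$ and is the generating function of the numbers $\int\Theta_{g,n}\prod_j\psi_j^{k_j}$. After that, however, the only input carrying information about $\Theta_{g,n}$ is Theorem~\ref{NSrec}. In this paper that recursion is a consequence of the statement you are trying to prove. Theorem~\ref{CGG} is proven in \cite{CGGRel}, and the text says explicitly that it ``was used in \cite{NorEnu} to prove Theorem~\ref{NSrec}''. The only derivation of the recursion that bypasses Theorem~\ref{CGG} is the Stanford--Witten super McShane argument, which the paper calls heuristic. So Steps~2--3 run the deduction of \cite{NorEnu} backwards. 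What they establish is that Theorem~\ref{CGG} is equivalent to the top-degree part of Theorem~\ref{NSrec}, not that either holds.

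As a computation the reversal is fine. The moments $\int\!\!\int x^{2a+1}y^{2b+1}D(L,x,y)\,dx\,dy$ and $\int x^{2a+1}R(L,L_j,x)\,dx$ are exact polynomials of degrees $2a+2b+3$ and $2a+1$, so the top-degree part of the recursion closes up. Two normalisations do need repair:
\begin{itemize}
\item With $D$ exactly as printed in \eqref{kerD}, the moments carry a spurious factor $2\pi$. The recursion would give $\widehat V_{1,2}=\pi/4$, whereas property~(iii) gives $\int\Theta_{1,2}=\int\Theta_{1,1}=\tfrac18$.
\item Your displayed $L_m$ are not the BGW constraints in the times of \eqref{BGWTheta}. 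For $m=1$ they force $\langle\tau_1\rangle_2=\tfrac{9}{128}$, contradicting your own check $\tfrac{3}{128}$. The first two terms must be $-\tfrac{(2m+1)!!}{2^{m+1}}\partial_{t_m}$ and $\tfrac{\hbar}{2^{m+2}}\sum_{i+j=m-1}(2i+1)!!(2j+1)!!\,\partial_{t_i}\partial_{t_j}$.
\end{itemize}

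What is missing is a handle on the numbers $\int_{\overline{\mathcal M}_{g,n}}\Theta_{g,n}\prod_j\psi_j^{k_j}$ that does not pass through the volumes. The characterisation (i)--(iv) yields uniqueness but not the Virasoro constraints, so it cannot fill this role. The route cited in the paper proves the Virasoro constraints directly for the right-hand side of \eqref{BGWTheta}:
\begin{itemize}
\item $\Theta_{g,n}$ is by definition the normalised top-degree, all-Neveu--Schwarz part of the Chiodo class $p_*c(E_{g,n})$.
\item Its graph-sum formula \eqref{JPPZ} has the shape of a Givental--Teleman $R$-matrix action.
\item \cite{CGGRel} relate this to topological recursion on the Bessel curve $x=\tfrac12z^2$, $y=1/z$, whose correlators satisfy exactly the BGW Virasoro constraints.
\item Uniqueness then identifies the potential with $Z^{BGW}$.
\end{itemize}
The second proof mentioned in the paper instead encodes both sides by spectral curves and uses generalised topological recursion together with a limit of spectral curves. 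Only after this are the volumes and Theorem~\ref{NSrec} derived. You do bring in the Bessel curve, but only as a device for evaluating moments of $D$. For a non-circular proof it has to enter as the object that computes $\Theta$ itself.
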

Theorem~\ref{CGG} was proven in \cite{CGGRel} by showing that the right hand side of \eqref{BGWTheta} satisfies Virasoro constraints.  This was used in \cite{NorEnu} to prove Theorem~\ref{NSrec} which encodes a Virasoro-like structure on the collection of volumes.  The left and right hand sides of \eqref{BGWTheta} can both be encoded into topological recursion spectral curves.  Via generalised topological recursion \cite{ABDKSDeg,BCGSThe} good behaviour of the limit of spectral curves then produces a second proof of Theorem~\ref{CGG}.

Theorem~\ref{thmsup} was proven via the following generalisation of Theorem~\ref{CGG} to allow Ramond points, which uses the generalised Br\'ezin-Gross-Witten tau function depending on a parameter $s$.  
\begin{thm}[Alexandrov, N., \cite{ANoSup}] \label{BGW=spin} 
\[Z^{BGW}(\hbar,s,t_0,t_1,...)=\exp\sum_{g,n}\frac{\hbar^{g-1}}{n!}\sum_{\vec{k}\in\bn^n}\sum_{m=0}^\infty\frac{s^m}{m!}\int_{\overline{\modm}_{g,n+m}}\hspace{-5mm}\Omega_{g,n+m}^{(1^n,0^m)}.\prod_{i=1}^n\psi_i^{k_i}t_{k_i}.\]
\end{thm}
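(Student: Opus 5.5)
The plan is to identify both sides by a characterization that pins down the generalised Br\'ezin--Gross--Witten tau function uniquely. I would use its Virasoro constraints $L_k Z^{BGW}(\hbar,s,t)=0$ for $k\geq 0$, in which $s$ enters only through a constant shift of one of the low operators, or equivalently a topological recursion spectral curve. The left-hand side is known to satisfy these constraints, so the work is to prove the same constraints for the right-hand side. This generalises the strategy used for Theorem~\ref{CGG} in \cite{CGGRel}, where $s=0$ and only Neveu--Schwarz points occur.

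\textbf{Step 1: a translated cohomological field theory.} By \eqref{JPPZ}, the pushforwards $p_*c(E_{g,\sigma})$ assemble into a cohomological field theory on $\bc^2=\langle e_{NS},e_R\rangle$. The parity condition at each vertex gives the fusion rules, and the gluing is $\phi_\Gamma^*$ compatibility across Neveu--Schwarz and Ramond nodes. Inserting $m$ Ramond points weighted by $s^m/m!$ and summing is then the Givental--Teleman \emph{translation} of this CohFT by $s\,e_R$. The right-hand side is therefore the top-degree part of a translated Chiodo CohFT, restricted to Neveu--Schwarz inputs at the $n$ labelled points. I would make this precise by inserting a formal degree-counting parameter $\epsilon$ and extracting the $\epsilon$-leading term. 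This works because $\Omega$ is the top Chern class, and on each component the rank $2g-2+\tfrac12(n+m+|\sigma|)$ is an affine function of the number of points of each type.

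\textbf{Step 2: spectral curve.} Using Chiodo's formula and the Eynard/DOSS correspondence, the Chiodo CohFT for $r=2$ is computed by topological recursion on an explicit curve with $x=\tfrac12 z^2$. Translation by $s\,e_R$ corresponds to a deformation of $y(z)$ and of the dilaton shift. The top-degree limit $\epsilon\to0$ should yield a curve of the form $x=\tfrac12z^2$, $y=\frac{c(s,z)}{z}$, which is irregular (Bessel-type) at $z=0$. This is where generalised topological recursion \cite{ABDKSDeg,BCGSThe} is needed to control the limit of spectral curves. As a consistency check, at $s=0$ it must reproduce the BGW curve $y=1/z$ and hence Theorem~\ref{CGG}.

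\textbf{Step 3: identification.} Topological recursion on the deformed curve is equivalent to Virasoro constraints whose only $s$-dependence sits in the unstable terms. These are the disk contribution $\tfrac{s^2}{2}$ and the annulus term, matching the initial data $\delta_{1,n}(\tfrac{s^2\delta_{0,g}}{2}+\tfrac{\delta_{1,g}}{8})$ in \eqref{recsup}. I would compare these directly with Alexandrov's Virasoro operators for $Z^{BGW}(\hbar,s,t)$; uniqueness of solutions then gives the equality. The genus-zero data with Ramond points, such as $\Omega_{0,\{1,0^2\}}$ and $\Omega_{0,\{1,0^4\}}$, must be checked by hand from \eqref{JPPZ} to fix the normalisation $\epsilon_{g,n,m}$.

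\textbf{Main obstacle.} I expect Step 2 to be the hardest part: the top-degree extraction does not commute naively with translation. A translation by $s\,e_R$ mixes degrees, so the leading $\epsilon$-term of the translated theory must be shown to be governed by a regular limit of spectral curves. My first attempt would be to rescale $s\mapsto \epsilon^{-1/2}s$ together with $z$, so that the Ramond weight and the rank shift from Ramond points balance. I would then verify that the R-matrix terms from $h(i)$ with $\sigma_i=0$ vanish in the limit, leaving only the translation. If this fails, the fallback is to derive the Virasoro constraints directly from the vertex, edge and leg functions in \eqref{JPPZ}, via the Chiodo Chern character and Mumford-type relations.
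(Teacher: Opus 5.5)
\textbf{Verdict: there is a genuine gap.} The paper does not prove this theorem. It quotes it from \cite{ANoSup}, and for $s=0$ it only indicates two routes: verify Virasoro constraints for the intersection numbers directly \cite{CGGRel}, or compare spectral curves and control the limit with generalised topological recursion. Your outline follows the second route. However, the step everything rests on is only asserted (``should yield''): that the top-degree part of the $s$-shifted theory is topological recursion on $x=\tfrac12z^2$, $y=c(s,z)/z$. The mechanism you propose for it is wrong in two places.

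First, the $r=2$ Chiodo CohFT on $\mathbb{C}^2=\langle e_{NS},e_R\rangle$ is semisimple of rank two: its degree-zero part is the group algebra of $\mathbb{Z}_2$ with diagonal pairing. A DOSS spectral curve for it therefore has two simple ramification points, whereas $x=\tfrac12z^2$ can only carry a rank-one theory.

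Second, the disappearance of the Ramond direction comes from a fact you never use. The total Chern class glues multiplicatively at Ramond nodes, while the ranks of the two sides add up to one less than $2g-2+\tfrac12(n+|\sigma|)$. Hence the top Chern class restricts to zero on every Ramond boundary stratum. Your justification of the $\epsilon$-extraction (``the rank is affine'') overlooks exactly this. $\phi_\Gamma^*$ of $\sum_j\epsilon^{\mathrm{rank}-j}c_j$ acquires a factor $\epsilon$ for each Ramond edge, so the Ramond--Ramond pairing degenerates as $\epsilon\to0$. The limit is then not an ordinary CohFT on $\mathbb{C}^2$, and one branch point must disappear. Without analysing this, you have no control of the limit of spectral curves.

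Two further steps need repair.

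\begin{enumerate}
\item Summing $\tfrac{s^m}{m!}$ over Ramond insertions with no $\psi$-classes is a shift of the base point $t_0^R=s$. It is not a Givental--Teleman translation, which uses $T(z)\in z^2V[[z]]$ and leaves the degree-zero part unchanged. The shift does change the degree-zero algebra: in the top-degree theory the genus-zero Neveu--Schwarz three-point class is $0$ at $s=0$, but becomes $\tfrac{s^2}{2}\int_{\overline{\mathcal M}_{0,5}}\Omega_{0,\{1^3,0^2\}}\neq0$. It therefore changes the canonical coordinates and the $R$-matrix, so it is not merely a deformation of $y$ and of the dilaton shift. Nor do the Ramond leg factors $h(i)$ vanish in the limit; they contribute to the top-degree part at the same order as every other term.
\item Topological recursion on $x=\tfrac12z^2$, $y=y_s(z)$ produces Virasoro operators whose dilaton shift involves all Taylor coefficients of $y_s$, so $s$ enters every $L_m$. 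Reaching Alexandrov's operators, where $s$ appears only in the constant term of $L_0$, needs an explicit change of times, which you have not given. Also, the $\tfrac18\delta_{1,g}$ term is the genus-one one-point term, not an annulus term.
\end{enumerate}

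A more tractable organisation uses the Ramond vanishing from the start. Define $\Omega^{(s)}_{g,n}=\sum_m\tfrac{s^m}{m!}\pi_{m*}\Omega_{g,\{1^n,0^m\}}$. It glues only along Neveu--Schwarz nodes, so it is a rank-one CohFT without unit. By the three-point computation above it is semisimple for $s\neq0$. So for $s\neq0$ you should look for a curve regular at the branch point, with the irregular Bessel-type behaviour arising only at $s=0$, or order by order in $s$.
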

The intersection numbers of $\Omega_{g,n+m}^{(1^n,0^m)}$, defined in \eqref{Omegadef}, are related in \eqref{voltheta} to the volume polynomial in Theorem~\ref{thmsup}.  Then Virasoro structure of $Z^{BGW}(\hbar,s,t_0,t_1,...)$ produces the recursion \eqref{recsup}.

The methods of proof of Theorems~\ref{NSrec} and \ref{thmsup} both rely on intersection calculations over $\overline{{\cal M}}_{g,n}$. We leave further details to the references above.  Essentially the algebro-geometric methods verify the truth of the statements, but somehow miss the reason.  An explanation for the structure of Theorems~\ref{NSrec} and \ref{thmsup} is supplied by the supergeometric methods, though lacking complete rigour.  The next calculations emphasise this further, where  algebro-geometric methods verify symmetry without giving an underlying reason.

\subsubsection{Even/odd symmetry in volume calculations}  \label{evenodd}

We will apply the weighted sum \eqref{JPPZ} to calculate volumes of the Neveu--Schwarz components in genus one and genus two, and in the process verify that the odd and even volumes are equal in these cases.  It seems that there is a non-obvious symmetry that is hidden from the calculations since contributions from various stable graphs lack symmetry.\\

\noindent $\boxed{(g,n)=(1,1)}$\quad 
There are two stable graphs:
 \begin{center}
\begin{tikzpicture}
\draw (0,-1) node        {$\Gamma_1$};
\draw (0,0) node [shape=circle,draw]        {1};
\draw (.3,0)--(.7,0);
\draw (4,-1) node        {$\Gamma_2$};
\draw (4,0) node [shape=circle,draw]        {0};
\draw (4.3,0)--(4.7,0);
\draw (3.9,.3) arc (20:336:.8);
\end{tikzpicture}
\end{center}
Their contributions to $p_*c_1(E_{1,1})$ using \eqref{JPPZ} are
$$w(\Gamma_1)=2(-\frac{11}{24}\kappa_1-\frac{1}{24}\psi_1)=-\frac{1}{24},\quad w(\Gamma_2)=\frac{1}{2}\left(-\frac{1}{12}+\frac{1}{24}\right)
$$
Hence 
$$p_*c_1(E_{1,1})=w(\Gamma_1)+w(\Gamma_2)=-\frac{1}{24}-\frac{1}{48}=-\frac{1}{16}=-2^{-1}\widehat{V}_{1,1}(L)$$
which uses the factor $(-1)^n2^{1-g-n}=-2^{-1}$ in the definition of the measure.

The odd contributions to $p_*c_1(E_{1,1})$ have weights
$$w_{\text{odd}}(\Gamma_1)=\frac{1}{4}\left(-\frac{1}{24}\right),\quad w_{\text{odd}}(\Gamma_2)=\frac{1}{2}\left(\frac{1}{2}\left(-\frac{1}{12}\right)+0\cdot\frac{1}{24}\right).
$$
This uses the dependence of the weights on the Neveu--Schwarz or Ramond behaviour at nodes via $\sigma:H(\Gamma)\to\{0,1\}$.  For the calculation above, of the four spin structures on the family of elliptic curves, two degenerate to a Neveu--Schwarz node and two degenerate to a Ramond node.  In particular, the odd spin structure degenerates to a Ramond node which does not contribute to the $\frac12(\frac{1}{24})$ term in $w(\Gamma_2)$. 
Hence
$$p_*c_1(E_{1,1}|_{\text{odd}})=w_{\text{odd}}(\Gamma_1)+w_{\text{odd}}(\Gamma_2)=\frac{1}{4}\left(-\frac{1}{24}\right)-\frac{1}{2}\frac{1}{2}\frac{1}{12}=-\frac{1}{32}
$$
so we see that
\[\widehat{V}_{1,1}^{\text{odd}}(L)=\widehat{V}_{1,1}^{\text{even}}(L).\]  
A relationship between volumes, known as the dilaton equation \cite{NorEnu}, allows us to deduce more generally
$$\widehat{V}_{1,n}^{\text{odd}}(L_1,...,L_n)=\widehat{V}_{1,n}^{\text{even}}(L_1,...,L_n).$$

\noindent $\boxed{(g,n)=(2,0)}$\quad There are five stable graphs:
 \begin{center}
\begin{tikzpicture}[scale=0.8]
\draw (0,-1) node        {$\Gamma_1$};
\draw (2,-1) node        {$\Gamma_2$};
\draw (5,-1) node        {$\Gamma_3$};
\draw (8.5,-1) node        {$\Gamma_4$};
\draw (11,-1) node        {$\Gamma_5$};
\draw (0,0) node [shape=circle,draw]        {2};
\draw (1.5,0) node [shape=circle,draw]        {1};
\draw (2.5,0) node [shape=circle,draw]        {1};
\draw (1.8,0)--(2.2,0);
\draw (5,0) node [shape=circle,draw]        {0};
\draw (5.3,0)--(5.7,0);
\draw (6,0) node [shape=circle,draw]        {1};
\draw (4.9,.3) arc (20:336:.8);
\draw (5,0) node [shape=circle,draw]        {0};
\draw (5.3,0)--(5.7,0);
\draw (6,0) node [shape=circle,draw]        {1};
\draw (8.5,0) node [shape=circle,draw]        {1};
\draw (8.4,.3) arc (20:336:.8);
\draw (11,0) node [shape=circle,draw]        {0};
\draw (10.9,.3) arc (20:336:.8);
\draw (11.1,.3) arc (160:-157:.8);
\end{tikzpicture}
\end{center}
Their contributions to $p_*c_2(E_{2})$ using \eqref{JPPZ} are
$$w(\Gamma_1)=-\kappa_2+\frac{121}{144}\kappa_1^2,\quad w(\Gamma_2)=-\frac{23}{36}\kappa_2+\frac{23}{144}\kappa_1^2,\quad w(\Gamma_3)=-\frac{179}{72}\kappa_2+\frac{11}{12}\kappa_1^2
$$
$$w(\Gamma_4)=-\frac13\kappa_2+\frac{1}{12}\kappa_1^2,\quad w(\Gamma_5)=-\frac{19}{24}\kappa_2+\frac{1}{4}\kappa_1^2
$$
Hence 
$$p_*c_2(E_{2})=w(\Gamma_1)+w(\Gamma_2)+w(\Gamma_3)+w(\Gamma_4)+w(\Gamma_5)=-\frac{21}{4}\kappa_2+\frac{9}{4}\kappa_1^2
$$
and 
$$\widehat{V}_2=2\int_{\overline{{\cal M}}_2}p_*c_2(E_{2})\exp(\omega^{\text{WP}})=4\pi^2\int_{\overline{{\cal M}}_2}p_*c_2(E_{2})\kappa_1=\frac{3\pi^2}{64}
$$
where the coefficient $2=(-1)^n2^{g-1+n}$ comes from the factor in the definition of the measure.

The odd contributions to $p_*c_1(E_2)$ have weights
$$w_{\text{odd}}(\Gamma_1)=\frac38w(\Gamma_1),\quad w_{\text{odd}}(\Gamma_2)=\frac38w(\Gamma_2),\quad w_{\text{odd}}(\Gamma_3)=-\frac{169}{96}\kappa_2+\frac{11}{16}\kappa_1^2
$$
$$w_{\text{odd}}(\Gamma_4)=-\frac14\kappa_2+\frac{1}{16}\kappa_1^2,\quad w_{\text{odd}}(\Gamma_5)=0
$$
so
\begin{align*}
p_*c_1(E_{2}|_{\text{odd}})&=w_{\text{odd}}(\Gamma_1)+w_{\text{odd}}(\Gamma_2)+w_{\text{odd}}(\Gamma_3)+w_{\text{odd}}(\Gamma_4)+w_{\text{odd}}(\Gamma_5)\\
&=-\frac{21}{8}\kappa_2+\frac{9}{8}\kappa_1^2
\end{align*}
and again we see that $\widehat{V}_2^{\text{odd}}=\widehat{V}_2^{\text{even}}$.  By the dilaton equation and intersection with a $\psi$ class,
$$\widehat{V}_{2,n}^{\text{odd}}(L_1,...,L_n)=\widehat{V}_{2,n}^{\text{even}}(L_1,...,L_n).$$

Hence the odd and even contributions to the volume are equal in genus one and genus two which is unexpected from the detailed calculations.  In the $(1,1)$ case, one might expect the even and odd components to contribute $3/4$, respectively $1/4$, to the volume, and indeed the one vertex stable graph contribution is split $3/4$ and $1/4$.  Similarly, in the $(2,0)$ case, one might expect the even and odd components to contribute $10/16$, respectively $6/16$, to the volume, and indeed the first two stable graph contributions are split $10/16$ and $6/16$.  (If the graph is a tree, then the Neveu--Schwarz condition is forced on internal edges, and even and odd components contribute $2^{g-1}(2^g+1), 2^{g-1}(2^g-1)$ as expected.)  The contributions from other stable graphs are a sum of contributions over different choices of Neveu--Schwarz/Ramond behaviour on  edges, and it is entirely non-trivial that the final even and odd component contributions turn out to be equal.

\end{document}